\title[]{Global minimizers for fast diffusion versus nonlocal interactions on negatively curved manifolds}
\author[Carrillo]{Jos\'{e} A. Carrillo}
\address[Jos\'{e} A. Carrillo]{\newline Mathematical Institute, University of Oxford, Oxford OX2 6GG, UK}
\email{carrillo@maths.ox.ac.uk}
\author[Fetecau]{Razvan C. Fetecau}
\address[Razvan C. Fetecau]{\newline Department of Mathematics, Simon Fraser University, 8888 University Dr., Burnaby, BC V5A 1S6, Canada}
\email{van@math.sfu.ca}
\author[Park]{Hansol Park}
\address[Hansol Park]{\newline Department of Mathematics and Statistics, Dalhousie University, 6299 South St, Halifax, NS B3H 4R2, Canada}
\email{hansol960612@snu.ac.kr}
\newtheorem{theorem}{Theorem}[section]
\newtheorem{lemma}{Lemma}[section]
\newtheorem{proposition}{Proposition}[section]
\newtheorem{remark}{Remark}[section]
\newtheorem{example}{Example}[section]
\newcommand{\bbr}{\mathbb R}
\newcommand{\bbs}{\mathbb S}
\newcommand{\bbh}{\mathbb H}
\newcommand{\calK}{\mathcal{K}}
\newcommand{\calP}{\mathcal{P}}
\newcommand{\calPo}{\mathcal{P}_{1,o}}
\newcommand{\calW}{\mathcal{W}}
\def\d{\mathrm{d}}
\newcommand{\dV}{\mathrm{d}V} 
\newcommand{\dx}{\mathrm{d}V\!(x)}
\newcommand{\dy}{\mathrm{d}V\!(y)}
\newcommand{\dm}{d} 
\newcommand{\p}{o}
\newcommand{\m}{q}
\newcommand{\h}{h}
\newcommand{\dist}{d}
\newcommand{\secc}{\mathcal{K}}
\newcommand{\ind}{\chi}
\begin{document}

\date{\today}

\subjclass[2020]{35A15, 35B38, 39B62, 58J90}
\keywords{free energy, global minimizers, intrinsic interactions, fast diffusion, Cartan-Hadamard manifolds}

\begin{abstract}
We investigate the existence of ground states for a free energy functional on Cartan-Hadamard manifolds. The energy, which consists of an entropy and an interaction term, is associated to a macroscopic aggregation model that includes nonlinear diffusion and nonlocal interactions. We consider specifically the regime of fast diffusion, and establish necessary and sufficient conditions on the behaviour of the interaction potential for global energy minimizers to exist. We first consider the case of manifolds with constant bounds of sectional curvatures, then extend the results to manifolds with general curvature bounds. To establish our results we derive several new Carlson-Levin type inequalities for Cartan-Hadamard manifolds.
\end{abstract}

\maketitle \centerline{\date}

\section{Introduction}
In this work we consider a generic Cartan-Hadamard manifold $M$, and investigate the existence of global minimizers of the following free energy functional:
\begin{align}
\label{eqn:E-gen}
E[\mu]=\frac{1}{\m-1}\int_M\rho(x)^\m \dx+\frac{1}{2}\iint_{M \times M} W(x, y) \d\mu(x)\d \mu(y),
\end{align}
where $0<\m<1$, $W: M\times M \to \bbr$ is an attractive interaction potential, and $\dV$ denotes the Riemannian volume measure on $M$. The energy is defined on the space $\calP(M)$ of probability measures on $M$, and $\rho$ represents the absolutely continuous part of $\mu\in\calP(M)$ with respect to $\dV$. The first component of the energy is entropy, which favours spreading, and the second is an interaction energy, which in our case promotes aggregation, at least for long distances.

The free energy \eqref{eqn:E-gen} relates to the following evolution equation on $L^1(M) \cap \calP(M)$:
\begin{align}\label{eqn:model}
\partial_t\rho(x)- \nabla_M \cdot(\rho(x)\nabla_M W*\rho(x))=\Delta \rho^\m(x),
\end{align}
where
\[
W*\rho(x)=\int_M W(x, y)  \d \rho(y),
\]
and $\nabla_M \cdot$ and $\nabla_M $ represent the manifold divergence and gradient, respectively. Equation \eqref{eqn:model} can in fact be formally expressed as the gradient flow of the energy $E$ on a suitable Wasserstein space \cite{AGS2005}.

Energies in the form \eqref{eqn:E-gen} and the associated evolution equation \eqref{eqn:model}, known as the aggregation-diffusion equation, have a wide range of applications in mathematical biology. For instance, nonlinear diffusion versions of the classical Keller-Segel model for chemotaxis or swarming models (see \cite{CarrilloCraigYao2019} and the references therein) have been well studied. Such models also appear as paradigms of social interactions where they can lead to segregation of individuals into clusters of opinions, see \cite{GPY17,GPY19} for instance. For specific manifold setups, we note the work in \cite{fatkullin2005critical}, where the authors used the Onsager model to analyze phase transitions phenomena on the two dimensional sphere. Finally, consensus of large number of individuals moving under constraints, modelled by aggregation-diffusion equations on manifolds, can also find applications in control engineering or swarming models with body-attitude coordination, see \cite{DDFM20}.

The range $0<\m<1$ of the nonlinear diffusion exponent is referred to in the literature as {\em fast} diffusion, while $\m=1$ and $\m>1$ correspond to {\em linear} and {\em slow} diffusion, respectively; note that for linear diffusion, the entropy term in \eqref{eqn:E-gen} is replaced by $\int_M \rho(x) \log \rho(x) \dx$.  The fast diffusion is faster than the linear/slow diffusion for small values of the density while it is actually slower for large densities (concentrations). Consequently, fast diffusion comes with an enhanced spreading at infinity, but at the same time, by competing with the nonlocal attractive interactions in our model, it may not be able to prevent Dirac delta aggregations from forming \cite{carrillo2019reverse}. This is in contrast with the linear and slow diffusion cases. 

The free energy \eqref{eqn:E-gen} and the aggregation-diffusion equation \eqref{eqn:model} have been extensively studied in the Euclidean setup $M = \bbr^\dm$. A recent review on the subject can be found in \cite{CarrilloCraigYao2019}. In \cite{CalvezCarrilloHoffmann2017,CaDePa2019,carrillo2019reverse,CaDeFrLe2022,CaHoMaVo2018} the authors considered an interaction potential of Riesz type, i.e., $W(x, y)=\frac{1}{\beta}\|x-y\|^\beta$, where $\|\cdot\|$ denotes the Euclidean norm and established existence or non-existence of global energy minimizers for various ranges of $\beta$ and $\m$. Uniqueness and qualitative properties (e.g., monotonicity and radial symmetry) of energy minimizers or steady states of \eqref{eqn:model} have been studied in \cite{Bedrossian11,CaHiVoYa2019,DelgadinoXukaiYao2022,Kaib17}. Well-posedness, long time behaviour and properties of solutions to the evolution equation \eqref{eqn:model} were addressed in \cite{CarrilloCraigYao2019, CaHiVoYa2019, fernandez2023partial}. In particular, the authors of \cite{fernandez2023partial, CGV22} studied the partial mass concentration in time for the fast diffusion, both for confinement and interaction potentials. In our current research we deal with the first step in understanding fast diffusion phenomena on general manifolds. Hence, we focus on finding conditions that ensure the existence of global minimizers for the free energy \eqref{eqn:E-gen}, without discussing the possible formation of concentrations, which are still quite intricate and open even in the Euclidean case \cite{CaDeFrLe2022}.

In spite of extensive studies of the free energy \eqref{eqn:E-gen} and the evolution equation \eqref{eqn:model} in the Euclidean space, there has been very little research done in the manifold setup. In this paper we consider the free energy \eqref{eqn:E-gen} on general Cartan-Hadamard manifolds. Important examples of Cartan-Hadamard manifolds are the hyperbolic space and the space of symmetric positive definite matrices. In general, any non-compact symmetric space is Cartan-Hadamard \cite{Helgason2001}. We consider interaction potentials $W(x,y)$ that depend only on the geodesic distance $\dist(x,y)$ between the points $x$ and $y$. This assumption, which makes the model completely {\em intrinsic} to the given manifold, was used in recent works on the interaction model with no diffusion \cite{FeHaPa2021,FePa2023b,FePa2021,FeZh2019}. With this setup, we address and answer the following problem. Assume that the sectional curvatures at a generic point $x$ are bounded below and above, respectively, by two negative functions $-c_m(r_x)$ and $-c_M(r_x)$, where $r_x$ denotes the distance from $x$ to a fixed (but arbitrary) pole on $M$. A particular case is when $c_m(\cdot)$ and $c_M(\cdot)$ are constant functions, but in general $c_m(\cdot)$ and $c_M(\cdot)$ are allowed to grow at any rate at infinity, e.g., they can grow algebraically or exponentially.  We are interested to account for the variable (and possibly unbounded) curvature of the manifold, and establish necessary and sufficient conditions on the behaviour of the interaction potential at zero and infinity, for global minimizers of the free energy to exist.

The problem posed above was studied in \cite{CaDePa2019} for $M = \bbr^\dm$ and $\m>0$ (all the three diffusion regimes), in \cite{CaFePa2024a} for $M=\bbh^\dm$ -- the hyperbolic space -- and $\m>1$ (slow diffusion), and in \cite{FePa2024b,FePa2024a} for general Cartan-Hadamard manifolds and $\m=1$ (linear diffusion). In particular, \cite{CaDePa2019} provided necessary, but not shown to be also sufficient, conditions on the interaction potential for existence of ground states in the fast diffusion regime. In the case of linear diffusion, \cite{FePa2024a,FePa2024b} demonstrate the role of the manifold's curvature with regards to spreading and blow-up. For example, spreading by linear diffusion in $\bbr^\dm$ is prevented provided the interaction potential grows at least logarithmically at infinity; the condition is in fact sharp \cite{CaDePa2019}. On the other hand, to contain the linear diffusion on general Cartan-Hadamard manifolds with bounded sectional curvatures, superlinear growth of the interaction potential at infinity is needed \cite{FePa2024a}. Therefore, as a manifestation of the curvature, a stronger growth of the attractive potential is needed to contain diffusion on manifolds.

The present work is the first to consider fast diffusion and nonlocal interactions on general manifolds of negative sectional curvatures. We quantify, in terms of the upper and lower bounds of the sectional curvatures, the conditions on $W$ that lead to existence or nonexistence of global energy minimizers. The main tools we use in our analysis are various comparison results in differential geometry (Rauch theorem, volume bounds, bounds for the Jacobian of the exponential map), as well as several new generalized Carlson-Levin inequalities on Cartan-Hadamard manifolds, that we derived for our purpose and have an interest in their own (Lemma \ref{mainineq} and \ref{maininequn}). Compared to their analogue from the Euclidean space \cite[Lemma 5]{carrillo2019reverse}, our results contain additional terms that reflect the volume growth of geodesic balls on Cartan-Hadamard manifolds.

Finally, we note that there has been strong interest in recent years on PDEs that involve nonlinear diffusion on Cartan-Hadamard manifolds. Issues that have been addressed for instance are the well-posedness and long-time behaviour of solutions to the porous medium equation on negatively curved manifolds \cite{GrilloMuratoriVazquez2017, GrilloMuratoriVazquez2019}, as well as to PDEs that include both nonlinear diffusion and local reaction terms \cite{ GrilloMeglioliPunzo2021a, GrilloMeglioliPunzo2021b}. Due to the nonlocal nature of our interaction term, the methods developed in our work are very different from the approaches taken in these papers.

The summary of the paper is as follows. In Section \ref{sect:prelim} we present the assumptions, the variational setup, and some necessary background in Riemannian geometry. In Section \ref{sect:nonexist} we establish necessary conditions on the interaction potential for global energy minimizers to exist (Theorem \ref{thm:nonexist}). In Section \ref{sect:C-Lineq} we derive a generalized Carlson-Levin inequality for Cartan-Hadamard manifolds with sectional curvatures bounded from below. This inequality is used in Section \ref{sect:existence}  to establish sufficient conditions on the growth of the interaction potential at infinity, for a ground state to exist (Theorem \ref{thm:existence}). In Section \ref{sect:unbounded} we generalize the previous existence/nonexistence results, as well the Carlson-Levin inequality, to Cartan-Hadamard manifolds with unbounded sectional curvatures.


\section{Preliminaries and background}
\label{sect:prelim}
\setcounter{equation}{0}

\subsection{Assumptions and notations}
\label{subsect:notations}
We make the following assumptions on the manifold $M$ and the interaction potential $W$.
\smallskip

\noindent(\textbf{M}) $M$ is an $\dm$-dimensional Cartan-Hadamard manifold, i.e., $M$ is complete, simply connected, and has everywhere non-positive sectional curvature. In particular, $M$ has no conjugate points, it is diffeomorphic to $\bbr^\dm$ and the exponential map at any point is a global diffeomorphism. 
\smallskip

\noindent(\textbf{W}) The interaction potential $W:M\times M\to\bbr$ has the form
\[
W(x, y)=\h(\dist(x, y)),\qquad \text{ for all }x, y\in M,
\]
where $\h:[0, \infty)\to \bbr $ is lower semi-continuous and non-decreasing, and $\dist$ denotes the intrinsic distance on $M$. As $h$ is non-decreasing, $h$ models purely {\em attractive} interactions. 
\smallskip

{\em Notations.} For $x \in M$, and $\sigma$ a two-dimensional subspace of $T_x M$, $\calK(x;\sigma)$ denotes the sectional curvature of $\sigma$ at $x$. We also denote by $\dV$ the Riemannian volume measure on $M$ and by $\calP_{ac}(M) \subset \calP(M)$ the space of probability measures on $M$ that are absolutely continuous with respect to $\dV$.

A point on a manifold is called a pole if the exponential map at that point is a global diffeomorphism. On Cartan-Hadamard manifolds all points have this property. Throughout the paper we fix a generic point $\p$ in $M$ which we will be referring to as the {\em pole} of the manifold.  We make the notation
\[
r_x := \dist(o,x), \qquad \text{ for all } x \in M.
\]
We denote by $B_r(\p)$ the open geodesic ball centred at $\p$ of radius $r$, and by $|B_r(\p)|$ its volume. Also, $\bbs^{\dm-1}$ denotes the unit sphere in $\bbr^\dm$ and $\omega(\dm)$ its volume.
\smallskip

{\em Definition of the energy.} Any probability measure $\mu \in \calP(M)$ can be written uniquely, by Lebesgue's decomposition theorem, as $\mu = \rho \, \dV + \mu^s$, where the first term is the absolutely continuous part of $\mu$ w.r.t. the volume measure $\dV$ and the second term is the singular part. Given the assumption on the interaction potential, the free energy $E: \calP(M) \to [-\infty,\infty]$ is given by
\begin{equation}
\label{eqn:energy}
E[\mu]=\frac{1}{\m-1}\int_M\rho(x)^\m\dx+\frac{1}{2}\iint_{M \times M} h(\dist(x, y))\d\mu(x)\d \mu(y)
\end{equation}
The first term in \eqref{eqn:energy} represent the entropy and the second is the interaction energy.

As explained in \cite{CaDePa2019,carrillo2019reverse}, the energy in \eqref{eqn:energy} can first be defined for functions in $C_c^\infty(M)\cap\calP(M)$, and then extended to the entire $\calP(M)$ by
\[
E[\mu]=\inf_{\substack{\{\mu_n\}\subset C_c^\infty(M)\cap\mathcal{P}(M)\\\text{s.t.}\mu_n\rightharpoonup \mu}}\liminf_{n\to\infty}E[\mu_n],
\]
where $\mu_n \rightharpoonup \mu$ denotes weak convergence as measures, i.e.,
\[
\int_M f(x) \mu_n(x) \dx \to \int_M f(x) \mu(x) \dx, \qquad \text{ as } n\to  \infty,
\]
for all bounded and continuous functions $f:M\to \bbr$.

{\em Wasserstein distance.} Denote by $\calP_1(M)$ the set of probability measures on $M$ with finite first moment, i.e.,
\[
\calP_1(M) = \left \{\mu \in \calP(M) : \int_M \dist(x,x_0) \d \mu(x) < \infty \right \},
\]
for some fixed (but arbitrary) point $x_0 \in M$. For $\rho,\sigma \in \calP(M)$, the \emph{intrinsic} $1$-Wasserstein distance is defined as
\begin{equation*}
	\calW_1(\rho,\sigma) = \inf_{\gamma \in \Pi(\rho,\sigma)} \iint_{M \times M} \dist(x,y) \d\gamma(x,y),
\end{equation*}
where $\Pi(\rho,\sigma) \subset \calP(M\times M)$ is the set of transport plans between $\rho$ and $\sigma$, i.e., the set of elements in $\calP(M\times M)$ with first and second marginals $\rho$ and $\sigma$, respectively. The space $(\calP_1(M),\calW_1)$ is a metric space \cite{AGS2005}. 
\smallskip

{\em Admissible set.} Similar to the approach in the Euclidean space \cite{CaHiVoYa2019,CaHoMaVo2018,DelgadinoXukaiYao2022} or for general manifolds and linear diffusion \cite{FePa2024a,FePa2024b}, we fix the Riemannian centre of mass of the admissible minimizers. For a fixed pole $\p \in M$, we denote
\begin{equation}
\label{eqn:calP-1o}
\calPo(M)=\left\{\mu\in \mathcal{P}_1(M): \int_M \log_\p x \, \d \mu(x)=0\right\},
\end{equation}
where $\log_\p$ denotes the Riemannian logarithm map at $\p$ (i.e., the inverse of the Riemannian exponential map $\exp_\p$) \cite{doCarmo1992}. The integral condition in \eqref{eqn:calP-1o} fixes the centre of mass of $\rho$ at the pole $\p$. It is in this set that we will establish existence of global energy minimizers.



\subsection{Comparison results in differential geometry}
\label{subsect:comparison}
We list here several comparison results in differential geometry, that will be used in our analysis.

\begin{theorem}[Rauch comparison theorem \cite{cheeger1975comparison}]\label{RCT}
Let $M$ and $\tilde{M}$ be Riemannian manifolds with $\mathrm{dim} (\tilde{M})\geq \mathrm{dim}(M$), and suppose that for all $p\in M$, $\tilde{p}\in \tilde{M}$, and $\sigma\subset T_pM$, $\tilde{\sigma}\subset T_{\tilde{p}}\tilde{M}$, the sectional curvatures $\secc$ and $\tilde{\secc}$ of $M$ and $\tilde{M}$, respectively, satisfy
\[
\tilde{\secc} (\tilde{p};\tilde{\sigma})\geq \secc(p;\sigma).
\]
Let $p\in M$, $\tilde{p}\in\tilde{M}$ and fix a linear isometry $i:T_pM\to T_{\tilde{p}}\tilde{M}$. Let $r>0$ be such that the restriction ${\exp_p}_{|B_r(0)}$ is a diffeomorphism and ${\exp_{\tilde{p}}}_{|B_r(0)}$ is non-singular. Let $c:[0, a]\to\exp_p(B_r(0))\subset M$ be any differentiable curve and define  $\tilde{c}:[0, a]\to\exp_{\tilde{p}}(B_r(0))\subset\tilde{M}$ by
\[
\tilde{c}(s)=\exp_{\tilde{p}}\circ i\circ\exp^{-1}_p(c(s)),\qquad s\in[0, a].
\]  
Then the length of $c$ is greater or equal than the length of $\tilde{c}$.
\end{theorem}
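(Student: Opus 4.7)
The plan is to reduce the length inequality to a pointwise comparison between Jacobi fields via the first variation of the exponential map. Since $\exp_p|_{B_r(0)}$ is a diffeomorphism, the tangent-space curve $\gamma(s) := \exp_p^{-1}(c(s))$ is well-defined and smooth in $T_pM$, and its image $\tilde\gamma(s) := i(\gamma(s))$ parametrizes $\tilde c$ by $\tilde c(s) = \exp_{\tilde p}(\tilde\gamma(s))$. For each fixed $s\in[0,a]$, the two-parameter variations
\[
H(t,s) = \exp_p(t\gamma(s)), \qquad \tilde H(t,s) = \exp_{\tilde p}(t\tilde\gamma(s)), \quad t\in[0,1],
\]
produce Jacobi fields $J_s(t) := \partial_s H(t,s)$ and $\tilde J_s(t) := \partial_s \tilde H(t,s)$ along the radial geodesics $t\mapsto H(t,s)$ and $t\mapsto \tilde H(t,s)$, respectively, with matched initial data: $J_s(0) = \tilde J_s(0) = 0$, $|J'_s(0)| = |\tilde J'_s(0)|$, and the angle between the initial covariant derivative and the radial tangent is preserved by the linear isometry $i$. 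The endpoint values recover the curve velocities: $J_s(1) = c'(s)$ and $\tilde J_s(1) = \tilde c'(s)$.

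The heart of the argument is the pointwise Jacobi comparison lemma: under $\tilde\secc \geq \secc$ and matched initial data as above,
\[
|\tilde J_s(t)| \leq |J_s(t)|, \qquad \text{for all } t\in[0,1].
\]
I would prove this by a Sturm-type ODE comparison. Decomposing each Jacobi field into a tangential component along the geodesic (which is affine in $t$ and thus identical on both manifolds by the matched initial data) and a normal component $J^\perp_s$, the scalar $f(t) := |J^\perp_s(t)|$ satisfies the differential inequality $f''(t) + \secc(t) f(t) \geq 0$ (with equality in constant curvature), where $\secc(t)$ denotes the sectional curvature of the plane $\mathrm{span}(\partial_t H(t,s), J^\perp_s(t))$. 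The analogous inequality for $\tilde f(t) := |\tilde J^\perp_s(t)|$ involves $\tilde\secc(t) \geq \secc(t)$. With the matched initial conditions $f(0) = \tilde f(0) = 0$ and $f'(0) = \tilde f'(0)$, a standard scalar Sturm comparison on the interval $[0,1]$ then gives $\tilde f(t) \leq f(t)$. No conjugate-point obstruction arises because both exponential maps are assumed regular on $B_r(0)$, so $f$ and $\tilde f$ remain positive on $(0,1]$.

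Finally, integrating the pointwise bound in $s$ yields
\[
L(\tilde c) = \int_0^a |\tilde c'(s)|\,ds = \int_0^a |\tilde J_s(1)|\,ds \leq \int_0^a |J_s(1)|\,ds = L(c),
\]
which is the desired inequality. The main obstacle is the Sturm step: the differential inequality for $f(t)$ must be derived carefully, since it mixes the Jacobi equation $J'' + R(\dot\eta, J)\dot\eta = 0$ with the identity $f f'' = \langle J^\perp, (J^\perp)''\rangle + |(J^\perp)'|^2 - (f')^2$, and one must invoke Cauchy-Schwarz to absorb the cross terms. The dimensional inequality $\dim\tilde M \geq \dim M$ is benign because $i$ embeds $T_pM$ isometrically into $T_{\tilde p}\tilde M$, providing a coherent identification of initial data in the direction where transport is unambiguous.
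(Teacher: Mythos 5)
The paper does not prove this statement: it is quoted as a classical result and attributed to \cite{cheeger1975comparison}, so the only benchmark is the standard proof in that reference. Your architecture matches it exactly: pass to the Jacobi fields $J_s(t)=\partial_s\exp_p(t\gamma(s))$ and $\tilde J_s(t)=\partial_s\exp_{\tilde p}(t\,i(\gamma(s)))$ along the radial geodesics, observe $J_s(1)=c'(s)$ and $\tilde J_s(1)=\tilde c'(s)$, match the initial data through the linear isometry $i$, note that the tangential components are linear in $t$ and hence identical, and integrate the pointwise bound $|\tilde J_s(1)|\le |J_s(1)|$ in $s$. All of that is sound, as is the remark that non-singularity of $\exp_{\tilde p}$ on $B_r(0)$ removes the conjugate-point obstruction on the $\tilde M$ side.

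The gap sits precisely in the step you call the heart of the argument. Writing $f=|J_s^\perp|$ and $\tilde f=|\tilde J_s^\perp|$, the Cauchy--Schwarz computation yields $f''+\secc(t)f\ge 0$ \emph{and} $\tilde f''+\tilde\secc(t)\tilde f\ge 0$: both differential inequalities point the same way. Two supersolutions with matched initial data and ordered coefficients do not satisfy $\tilde f\le f$ by Sturm comparison; the scalar trick only produces \emph{lower} bounds for each of $f$ and $\tilde f$ against the model equation $y''+ky=0$, never the \emph{upper} bound you need on the higher-curvature side. (Already for $\secc=\tilde\secc\equiv 0$ the inequalities $f''\ge 0$, $\tilde f''\ge 0$ with $f(0)=\tilde f(0)=0$, $f'(0)=\tilde f'(0)$ are compatible with $\tilde f>f$.) The missing ingredient is the index lemma: Jacobi fields vanishing at $t=0$ minimize the index form among fields with the same value at $t$, so $\langle \tilde J',\tilde J\rangle/|\tilde J|^2=I_0^t(\tilde J,\tilde J)/|\tilde J(t)|^2$ can be dominated by the corresponding quantity for $J$ after transplanting $\tilde J$ via parallel frames to a competitor field along the geodesic in $M$ and using $\tilde\secc\ge\secc$ inside the index form; monotonicity of $|\tilde J|^2/|J|^2$ together with the limit $1$ as $t\to 0$ then closes the argument. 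This is how \cite{cheeger1975comparison} proceeds. I note that in the only instance where the paper invokes the theorem, $\tilde M=\bbr^\dm$ is flat, so $\tilde f(t)=t|\tilde J_s'(0)|$ holds with equality and your convexity argument $f''\ge-\secc f\ge 0$ does suffice; but for the general statement as quoted, the Sturm step as written does not go through.
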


For manifolds with bounded sectional curvatures, the Jacobian of the exponential map and the volumes of geodesic balls can be bounded below and above as follows. The results can be found in \cite[Section III.4]{Chavel2006}. 
\begin{theorem}[Jacobian and volume bounds \cite{Chavel2006}] 
\label{lemma:Chavel-thms}
Suppose the sectional curvatures of $M$ satisfy
\begin{equation}
-c_m\leq\calK(x;\sigma)\leq -c_M<0,
\label{eqn:const-bounds}
\end{equation}
for all $x \in M$ and all two-dimensional subspaces $\sigma \subset T_x M$, where $c_m$ and $c_M$ are positive constants. Let $\p$ be an arbitrary pole on $M$. Then, the Jacobian of the exponential map at $\p$ can be bounded as
\begin{equation}
\label{eqn:J-bounds}
\left(
\frac{\sinh(\sqrt{c_M}\|u\|)}{\sqrt{c_M}\|u\|}
\right)^{\dm-1}\leq |J(\exp_\p)(u)|\leq\left(
\frac{\sinh(\sqrt{c_m}\|u\|)}{\sqrt{c_m}\|u\|}
\right)^{\dm-1},
\end{equation}
for all $u \in T_\p M$.

Also, the volumes of geodesic balls are bounded by
\begin{equation}
\label{eqn:vol-bounds}
 \dm w(\dm)\int_0^r \left(\frac{\sinh(\sqrt{c_M}t)}{\sqrt{c_M}}\right)^{\dm-1} \d t \leq |B_r(\p)|\leq \dm w(\dm) \int_0 ^r \left(\frac{\sinh(\sqrt{c_m} t)}{\sqrt{c_m}}\right)^{\dm-1} \d t.
\end{equation}
\end{theorem}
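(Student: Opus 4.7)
My plan is to deduce both estimates from Rauch's comparison theorem (Theorem \ref{RCT}) via the standard Jacobi-field representation of the differential of $\exp_\p$. I proceed in three steps, with the curvature comparison playing the central role in the middle step.

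\textbf{Step 1 (Jacobian via Jacobi fields).} First I would fix $u \in T_\p M$ with $r := \|u\| > 0$ and let $\gamma(t) = \exp_\p(tu/r)$ be the unit-speed radial geodesic. Choose an orthonormal frame $\{u/r, e_1, \ldots, e_{\dm-1}\}$ of $T_\p M$ and, for each $i$, let $\hat J_i$ denote the Jacobi field along $\gamma$ with $\hat J_i(0) = 0$ and $\hat J_i'(0) = e_i$. Using $d(\exp_\p)_u(w) = J_w(1)$ and rescaling to arclength, the Jacobian takes the form
\[
|J(\exp_\p)(u)| = r^{-(\dm-1)}\sqrt{\det\bigl(\langle \hat J_i(r), \hat J_j(r)\rangle\bigr)_{i,j}},
\]
while the radial direction contributes $1$ by the Gauss lemma.

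\textbf{Step 2 (Two-sided comparison).} Let $M_c$ denote the simply-connected model of constant curvature $-c$; there the analogous Jacobi fields have length $\sinh(\sqrt{c}\,t)/\sqrt{c}$ and are mutually orthogonal, so the Jacobian equals $[\sinh(\sqrt{c}\,t)/(\sqrt{c}\,t)]^{\dm-1}$. Applying Theorem \ref{RCT} with $\tilde M = M_{c_M}$ and our $M$ in the role of $M$, the hypothesis $\calK \leq -c_M = \tilde\calK$ is satisfied, so curves in $M_{c_M}$ are no longer than those in $M$; this translates into a lower bound on $\hat J_i$ and hence on the Jacobian. Swapping roles, taking $\tilde M = M$ compared against $M_{c_m}$ (so $\tilde\calK = \calK \geq -c_m$), gives the reverse Jacobi-field inequality. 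Combined with Step 1, these yield \eqref{eqn:J-bounds}.

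\textbf{Step 3 (Volumes by integration).} Since $\exp_\p:T_\p M \to M$ is a global diffeomorphism on a Cartan-Hadamard manifold, polar coordinates on $T_\p M$ give
\[
|B_r(\p)| = \int_0^r\!\!\int_{\bbs^{\dm-1}} |J(\exp_\p)(t\omega)|\, t^{\dm-1}\, \mathrm{d}\sigma(\omega)\, \mathrm{d}t.
\]
Substituting the Jacobian bounds from Step 2, the angular integration produces the constant $\dm\,\omega(\dm)$, and each $t^{\dm-1}$ cancels the $t^{\dm-1}$ in the denominator of $[\sinh(\sqrt{c}\,t)/(\sqrt{c}\,t)]^{\dm-1}$, yielding \eqref{eqn:vol-bounds}.

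The main obstacle is promoting the curve-length inequality of Theorem \ref{RCT} into the pointwise Jacobi-field (equivalently Jacobian-determinant) inequality required in Step 2. This is done through either a Riccati-equation comparison for the shape operator of distance spheres or an index-form argument; both are standard but need to be set up carefully so that the one-sided curvature hypotheses of Rauch's theorem line up correctly with the model $M_{c_M}$ on one side and $M_{c_m}$ on the other.
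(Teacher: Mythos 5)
The paper does not prove this theorem at all: it is quoted verbatim as a known result with a pointer to \cite[Section III.4]{Chavel2006}, so there is no in-paper argument to compare against. Your outline is the standard textbook route (Jacobi-field representation of $d\exp_\p$, two-sided curvature comparison, integration in polar coordinates), i.e.\ essentially the proof Chavel gives, and Steps 1 and 3 are correct as written, including the correct orientation of the two applications of Theorem \ref{RCT}.

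The one point I would press you on is the passage in Step 2 from bounds on the individual lengths $|\hat J_i(r)|$ to bounds on the Gram determinant $\sqrt{\det\bigl(\langle \hat J_i(r),\hat J_j(r)\rangle\bigr)}$. For the \emph{upper} bound this is harmless: Hadamard's inequality gives $\sqrt{\det} \leq \prod_i |\hat J_i(r)|$, so the per-field upper bound $|\hat J_i(r)| \leq \sinh(\sqrt{c_m}\,r)/\sqrt{c_m}$ (from $\calK \geq -c_m$) immediately yields the right-hand side of \eqref{eqn:J-bounds}. For the \emph{lower} bound, however, per-field lower bounds $|\hat J_i(r)| \geq \sinh(\sqrt{c_M}\,r)/\sqrt{c_M}$ do not control the determinant from below (the fields could a priori be nearly parallel); this is exactly G\"unther's inequality, and its proof requires the argument you only gesture at in your closing remark --- e.g.\ writing $\frac{\d}{\d t}\log\sqrt{\det A(t)} = \mathrm{tr}\bigl(A'(t)A(t)^{-1}\bigr)$ and comparing the resulting Riccati equation for the shape operator of geodesic spheres with the model, or an index-form argument. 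Relatedly, Theorem \ref{RCT} as stated in the paper is the curve-length version of Rauch, and you need its infinitesimal (Jacobi-field) form to get even the per-field bounds; that is standard but is an additional step. So the proposal is the right skeleton, but the lower Jacobian bound is the genuinely nontrivial half and is currently deferred rather than proved.
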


\begin{remark}
\normalfont
The two bounds in \eqref{eqn:vol-bounds} represent the volume of the ball of radius $r$ in the hyperbolic space $\bbh^\dm$ of constant curvatures $-c_M$, and $-c_m$, respectively.
\end{remark}

The comparison results in Theorem \ref{lemma:Chavel-thms} can be generalized to manifolds of unbounded curvature.  The results are the following (see \cite[Theorem 2.2 and Corollary 2.1]{FePa2024b}, also \cite[Theorem 1.3]{alias2019maximum}).

\begin{theorem}(Generalized Jacobian and volume bounds \cite{FePa2024b})
\label{lemma:Chavel-thms-gen}
Suppose the sectional curvatures of  $M$ satisfy
\begin{equation}
-c_m(r_x)\leq \mathcal{K}(x;\sigma)\leq -c_M(r_x) < 0, 
\label{eqn:var-bounds}
\end{equation}
for all $x \in M$ and all two-dimensional subspaces $\sigma \subset T_x M$, where $c_m(\cdot)$ and $c_M(\cdot)$ are positive continuous functions of the distance from a fixed pole $\p$. Denote by $\psi_m$ and $\psi_M$ the solutions of the following initial-value-problems (IVP's):
\begin{equation}
\begin{cases}
\psi_m''(\theta)=c_m(\theta)\psi_m(\theta),\quad\forall \theta>0, \\[5pt]
\psi_m(0)=0,\quad \psi_m'(0)=1,
\end{cases}\quad \text{ and } \quad 
\begin{cases}
\psi_M''(\theta)=c_M(\theta)\psi_M(\theta),\quad\forall \theta>0, \\[5pt]
\psi_M(0)=0,\quad \psi_M'(0)=1.
\end{cases}
\label{eqn:psimM}
\end{equation}

Then, the Jacobian of the exponential map at $\p$ can be bounded as
\[
\left( \frac{\psi_M(\|u\|)}{\|u\|} \right)^{\dm-1}\leq |J(\exp_\p)(u)| \leq
\left( \frac{\psi_m(\|u\|)}{\|u\|} \right)^{\dm-1},
\]
for all $u \in T_\p M$. Also, the volumes of geodesic balls in $M$ can be bounded by
\[
 \dm w(\dm)\int_0^r \psi_M(t) ^{\dm-1} \d t \leq |B_r(\p)|\leq \dm w(\dm) \int_0 ^r \psi_m(t)^{\dm-1} \d t.
\]
\end{theorem}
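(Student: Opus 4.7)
The plan is to express the Jacobian of $\exp_\p$ through matrix-valued Jacobi fields along radial geodesics, to bound those Jacobi fields by Sturm-Liouville comparison with the scalar ODEs defining $\psi_m$ and $\psi_M$, and to obtain the volume bounds by integrating in normal polar coordinates.

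For the Jacobian bounds, fix $u \in T_\p M$ with $\|u\| = r$, set $\xi = u/r$, and let $\gamma(t) = \exp_\p(t\xi)$ for $t \in [0, r]$. Choose an orthonormal basis $e_1, \ldots, e_{\dm-1}$ of $\xi^\perp$ and let $J_i$ be the Jacobi field along $\gamma$ with $J_i(0) = 0$ and $J_i'(0) = e_i$. The Gauss lemma together with the standard identification of $d(\exp_\p)_u$ with Jacobi fields gives
\[
|J(\exp_\p)(u)| = \frac{1}{r^{\dm-1}}\, \det\nolimits_{\perp}\bigl(J_1(r), \ldots, J_{\dm-1}(r)\bigr),
\]
where $\det\nolimits_\perp$ is the $(\dm-1)$-dimensional determinant on $\gamma'(r)^\perp$. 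Assembling the $J_i$ into a matrix-valued Jacobi endomorphism $A(t): \xi^\perp \to \gamma'(t)^\perp$ satisfying $A'' + \calR(t) A = 0$, $A(0) = 0$, $A'(0) = I$, where $\calR(t)X = R(X, \gamma'(t))\gamma'(t)$ is the tidal operator, the curvature hypothesis \eqref{eqn:var-bounds} becomes the operator inequality $c_M(t) I \leq \calR(t) \leq c_m(t) I$ on $\gamma'(t)^\perp$.

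Setting $U(t) = A'(t) A(t)^{-1}$ (symmetric, and with the isotropic asymptotics $U(t) \sim t^{-1} I$ as $t \to 0^+$), I would use that $U$ solves the matrix Riccati equation $U' + U^2 + \calR = 0$, while $u_m = \psi_m'/\psi_m$ and $u_M = \psi_M'/\psi_M$ solve the scalar Riccati equations $u_\ast' + u_\ast^2 = c_\ast$. A matrix Riccati comparison argument then yields
\[
\frac{\psi_M'(t)}{\psi_M(t)}\, I \;\leq\; U(t) \;\leq\; \frac{\psi_m'(t)}{\psi_m(t)}\, I,
\]
and integrating $\frac{\d}{\d t}\log \det A = \operatorname{tr} U$ from $0^+$ to $r$ produces the claimed Jacobian bounds. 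The volume bounds then follow by writing, in normal polar coordinates,
\[
|B_r(\p)| = \int_0^r \int_{\bbs^{\dm-1}} |J(\exp_\p)(t\xi)|\, t^{\dm-1}\, \d\xi\, \d t,
\]
inserting the pointwise Jacobian estimate, and using $|\bbs^{\dm-1}| = \dm\, \omega(\dm)$.

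The main obstacle is the matrix Riccati comparison, because $\calR(t)$ is an anisotropic symmetric operator whose spectrum need not align with that of the isotropic scalar model, so the operator inequality $c_M(t) I \leq \calR(t) \leq c_m(t) I$ must be shown to propagate through the matrix flow. The standard workaround — which I would adopt — is to compare eigenvalues of the symmetric solution $U(t)$ one at a time against $u_M$ and $u_m$, using that $U$ remains symmetric along the flow and exploiting the convexity of the Riccati nonlinearity. A further subtlety in the unbounded-curvature setting is verifying that $\psi_m, \psi_M$ remain positive on all of $(0, \infty)$, which in the Cartan-Hadamard regime follows from $c_M > 0$, $\psi_\ast(0) = 0$, $\psi_\ast'(0) = 1$, and the resulting strict convexity and monotonicity of $\psi_\ast$, ensuring that $U(t)$ and the comparison quantities $u_\ast(t)$ are defined for all $t \in (0, \infty)$.
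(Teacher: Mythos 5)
The paper does not prove this theorem: it is imported from \cite{FePa2024b} (see also \cite{alias2019maximum}), so there is no internal argument to compare against. Your sketch reconstructs the standard route to such G\"unther--Bishop type bounds, and its structure is sound: the matrix Jacobi field $A$ along a radial geodesic is invertible on $(0,\infty)$ because Cartan--Hadamard manifolds have no conjugate points, $U=A'A^{-1}$ is symmetric by the Lagrange identity, the singular asymptotics $U(t)\sim t^{-1}I$ match those of $\psi_\ast'/\psi_\ast$ so the comparison can be initialized at $t=0^+$, and integrating $\operatorname{tr}U=(\log\det A)'$ yields the Jacobian bounds, from which the volume bounds follow in polar coordinates. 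One sign should be corrected: with your conventions $A''+\calR A=0$ and $\calR(t)X=R(X,\gamma'(t))\gamma'(t)$, the hypothesis \eqref{eqn:var-bounds} gives $-c_m(t)I\leq\calR(t)\leq -c_M(t)I$, not $c_M(t)I\leq\calR(t)\leq c_m(t)I$; it is $-\calR$ that is pinched between $c_M I$ and $c_m I$, which is exactly what the Riccati equation $U'+U^2=-\calR$ requires in order to be compared with the scalar equations $u_\ast'+u_\ast^2=c_\ast$. With that fixed, the Eschenburg--Heintze type (eigenvalue-by-eigenvalue) Riccati comparison you invoke does deliver $u_M(t)I\leq U(t)\leq u_m(t)I$, and your verification that $\psi_m,\psi_M$ stay positive and increasing on $(0,\infty)$ is correct and consistent with \eqref{eqn:psi-ineq-mM}. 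In short: a correct sketch of the argument the cited references carry out, modulo the sign of the tidal operator.
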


\begin{remark} 
\label{rmk:c-consts}
\normalfont
When $c_m(\cdot) \equiv c_m$ and $c_M(\cdot) \equiv c_M$ are constant functions, the IVPs in \eqref{eqn:psimM} have the explicit solutions
 \[
\psi_m(\theta)=\frac{\sinh(\sqrt{c_m}\theta)}{\sqrt{c_m}},\quad \text{ and } \quad \psi_M(\theta)=\frac{\sinh(\sqrt{c_M}\theta)}{\sqrt{c_M}}.
\]
In this case, Theorem \ref{lemma:Chavel-thms-gen} reduces to Theorem \ref{lemma:Chavel-thms}. 

We also note that while in general, one cannot find explicit solutions to the IVP's in \eqref{eqn:psimM}, these are linear problems and the global well-posedness of their solutions is guaranteed by standard results in ODE theory. In addition, by straightforward arguments, in can be shown that $\psi_m$ and $\psi_M$ satisfy
\begin{equation}
\label{eqn:psi-ineq-mM}
 \psi_m(\theta) \geq 0, \; \psi_M(\theta) \geq 0 \qquad \text{ and } \qquad \psi_m'(\theta)\geq 1, \; \psi_M'(\theta)\geq 1, \quad \text{ for all } \theta \geq 0.
\end{equation}
\end{remark}

\begin{remark}
\normalfont
We note that for the upper bounds in Theorems \ref{lemma:Chavel-thms} and \ref{lemma:Chavel-thms-gen}, a weaker condition on curvature can be assumed, namely that the Ricci curvatures are greater than or equal to $-(\dm-1) c_m$ (see Theorems III.4.3 and III.4.4 in \cite{Chavel2006} for the constant curvature bounds case). We do not consider this weaker assumption here. 
\end{remark}

The following lemma, listed verbatim from \cite{FePa2024b} will be needed in our analysis. Here, $\overline{D}f$ denotes the upper derivatives of a real-valued function $f$, defined by
\[
\overline{D}f(x)=\limsup_{h\to0}\frac{f(x+h)-f(x)}{h}.
\]

\begin{lemma}
\label{estpsi}
Let $c:[0,\infty) \to (0,\infty)$ be a positive and continuous function, and consider the solution $\psi$ of the IVP:
\begin{equation}
\label{eqn:ivp-psi}
\begin{cases}
\psi''(\theta)=c (\theta)\psi (\theta),\qquad\forall \theta>0, \\[5pt]
\psi(0)=0,\quad \psi'(0)=1.
\end{cases}
\end{equation}

\noindent (Upper bound) Assume $c(\cdot)$ is non-decreasing. Then, 
\begin{equation}
\label{estpsires-relaxed}
\psi(\theta)\leq
\theta\exp\left(
\int_{0}^\theta\sqrt{c(t)}\, \d t \right), \qquad \forall  \theta \geq0.
\end{equation}

\noindent (Lower bound) Assume $c(\cdot)$ satisfies
\begin{equation}
\label{eqn:c32}
\lim_{\theta\to\infty}\frac{\overline{D}c(\theta)}{c(\theta)^{3/2}}=0.
\end{equation}

Then, for any $0<\epsilon <1$, there exists $\theta_0>0$ such that
\begin{equation}
\label{estpsires}
\psi(\theta_0)\exp\left(
(1-\epsilon)\int_{\theta_0}^\theta\sqrt{c(t)}\, \d t
\right)\leq \psi(\theta), \qquad \forall \theta \geq \theta_0.
\end{equation}
\end{lemma}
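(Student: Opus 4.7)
The plan is to establish both bounds by constructing explicit WKB-type sub/super-solutions of \eqref{eqn:ivp-psi} and running a Sturm (Wronskian) comparison. As a preliminary, I would observe that $\psi>0$ and $\psi'\ge 1$ on $(0,\infty)$ (by the same bootstrap used for \eqref{eqn:psi-ineq-mM}), so that the logarithmic derivative $v(\theta):=\psi'(\theta)/\psi(\theta)$ is well-defined and positive, and satisfies the Riccati equation $v'=c-v^2$.

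For the \emph{upper bound}, take the candidate super-solution
\[
\tilde\psi(\theta)=\theta\exp\!\left(\int_0^\theta\sqrt{c(t)}\,\d t\right),
\]
which matches the initial data $\tilde\psi(0)=0$, $\tilde\psi'(0)=1$. A direct computation gives
\[
\tilde\psi''(\theta)-c(\theta)\tilde\psi(\theta)=\bigl[2\sqrt{c(\theta)}+\theta\,\overline{D}\sqrt{c}(\theta)\bigr]\exp\!\left(\int_0^\theta\sqrt{c}\right)\ge 0,
\]
the nonnegativity following from monotonicity of $c$ (the computation can be justified by mollifying $c$ to smooth non-decreasing functions and passing to the limit). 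The Wronskian $W=\tilde\psi'\psi-\tilde\psi\psi'$ then satisfies $W(0)=0$ and $W'=\psi\,(\tilde\psi''-c\tilde\psi)\ge 0$, so $W\ge 0$ and $(\tilde\psi/\psi)'=W/\psi^2\ge 0$. Since $\lim_{\theta\to 0^+}\tilde\psi/\psi=1$ by L'H\^opital, we conclude $\tilde\psi\ge\psi$, which is \eqref{estpsires-relaxed}.

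For the \emph{lower bound}, I would fix a large $\theta_0$ and take
\[
\tilde\psi(\theta)=\psi(\theta_0)\exp\!\left((1-\epsilon)\int_{\theta_0}^\theta\sqrt{c(s)}\,\d s\right).
\]
An analogous computation yields
\[
c(\theta)\tilde\psi(\theta)-\tilde\psi''(\theta)=c(\theta)\tilde\psi(\theta)\left[\epsilon(2-\epsilon)-\frac{(1-\epsilon)\,\overline{D}c(\theta)}{2\,c(\theta)^{3/2}}\right],
\]
which is nonnegative for $\theta\ge\theta_0$ once $\theta_0$ is chosen large enough, by \eqref{eqn:c32}. The Wronskian $W=\psi'\tilde\psi-\psi\tilde\psi'$ again satisfies $W'=\psi\,(c\tilde\psi-\tilde\psi'')\ge 0$, so the inequality $\psi\ge\tilde\psi$ on $[\theta_0,\infty)$ reduces to $W(\theta_0)\ge 0$, i.e.\ to the seeding estimate
\[
v(\theta_0)\ge(1-\epsilon)\sqrt{c(\theta_0)}.
\]

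The principal obstacle is precisely this seeding estimate. I would prove it via a Dini-derivative barrier argument on the Riccati equation: fix $\epsilon'\in(0,\epsilon)$ and observe that at any putative crossing $v(\theta_*)=(1-\epsilon')\sqrt{c(\theta_*)}$ one has
\[
v'(\theta_*)-(1-\epsilon')\,\overline{D}\sqrt{c}(\theta_*)=c(\theta_*)\left[\epsilon'(2-\epsilon')-\frac{(1-\epsilon')\,\overline{D}c(\theta_*)}{2\,c(\theta_*)^{3/2}}\right],
\]
which is strictly positive for $\theta_*$ past a threshold determined by \eqref{eqn:c32}. Combined with the fact that $v(\theta)\sim 1/\theta\to\infty$ as $\theta\to 0^+$ (so $v$ certainly exceeds $(1-\epsilon')\sqrt{c}$ initially), this inequality precludes $v-(1-\epsilon')\sqrt{c}$ from changing sign downward past the threshold, and a direct integration $v'\ge c\epsilon'(2-\epsilon')$ on any interval where $v$ is trapped below the barrier forces $v$ to rise back above it. The slack $\epsilon'<\epsilon$ then yields the required seeding estimate. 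Handling the non-smoothness of $c$ in this crossing analysis---using upper (Dini) derivatives in place of classical ones, and guarding against intervals on which $c$ may be small---is the delicate point; once this is in place, the two Wronskian comparisons are standard.
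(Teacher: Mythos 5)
The paper does not actually prove Lemma \ref{estpsi} internally: it is quoted verbatim from \cite{FePa2024b} and the proof is deferred there, so a line-by-line comparison is not possible from this source. Judged on its own, your argument is essentially correct and self-contained: the Wronskian computation $W'=\psi(\tilde\psi''-c\tilde\psi)$ with $W(0)=0$ (resp.\ $W'=\psi(c\tilde\psi-\tilde\psi'')$ with the seed at $\theta_0$) is the right mechanism, the formula $\tilde\psi''-c\tilde\psi=\bigl(2\sqrt{c}+\theta\,\overline{D}\sqrt{c}\,\bigr)\exp\bigl(\int_0^\theta\sqrt{c}\bigr)$ checks out, and the mollification remark adequately handles the fact that $\tilde\psi$ is only $C^1$ when $c$ is merely continuous and non-decreasing. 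Note also that once you have the Riccati barrier, the second Wronskian comparison is redundant: persistence of $v\ge(1-\epsilon)\sqrt{c}$ on all of $[\theta_0,\infty)$ integrates directly, via $(\log\psi)'=v$, to \eqref{estpsires}.

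The one step you should not leave at the level of a gesture is the claim that ``direct integration \dots forces $v$ to rise back above'' the barrier, i.e.\ the existence of a seed $\theta_0$ beyond the threshold $T$ with $v(\theta_0)\ge(1-\epsilon')\sqrt{c(\theta_0)}$. As stated, $v(\theta)\ge v(T)+\epsilon'(2-\epsilon')\int_T^\theta c$ does not obviously overtake $(1-\epsilon')\sqrt{c(\theta)}$, since \eqref{eqn:c32} permits $c$ to grow as fast as a double exponential. The clean way to close this: argue by contradiction, assuming $v<(1-\epsilon')\sqrt{c}$ on all of $[T,\infty)$. Then $v$ is increasing there, so $c(\theta)\ge\bigl(v(T)/(1-\epsilon')\bigr)^2>0$ is bounded below and $\int_T^\theta c\to\infty$; moreover, the integrated form of \eqref{eqn:c32}, namely $\overline{D}\sqrt{c}\le\tfrac{\delta}{2}c$ for $\theta\ge T_\delta$ and hence $\sqrt{c(\theta)}\le\sqrt{c(T)}+\tfrac{\delta}{2}\int_T^\theta c$, shows that for $\delta$ small enough the quantity $v(\theta)-(1-\epsilon')\sqrt{c(\theta)}$ grows at least like a positive multiple of $\int_T^\theta c$ and so becomes positive --- a contradiction. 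With that inserted, the proof is complete.
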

\begin{proof}
See  \cite[Lemma 2.1]{FePa2024b}.
\end{proof}

 
\section{Nonexistence of a global minimizer} 
\label{sect:nonexist}
\setcounter{equation}{0}

In this section, we investigate necessary conditions on the interaction potential for global minimizers of the energy \eqref{eqn:energy} to exist. We assume here that the sectional curvatures of $M$ satisfy
\begin{equation}
\label{eqn:K-upperb}
\mathcal{K}(x;\sigma)\leq -c_M< 0, 
\end{equation}
for all $x \in M$ and all two-dimensional subspaces $\sigma \subset T_x M$, where $c_M$ is a positive constant.

Fix a pole $\p \in M$ and consider the following family of probability density functions $\rho_R$ that depend on $R>0$:
\begin{equation}
\label{eqn:rhoR}
\rho_{R}(x)=\begin{cases}
\displaystyle\frac{1}{|B_R(\p)|},&\quad\text{when }x\in B_R(\p),\\[10pt]
0,&\quad\text{otherwise}.
\end{cases}
\end{equation}
We will estimate $E[\rho_R]$ in two limits: $R \to 0+$ (blow-up) and $R \to \infty$ (spreading). 

By the volume comparison in Theorem \ref{lemma:Chavel-thms}, the volume $|B_R(\p)|$ can be estimated as 
\begin{equation}
\label{eqn:vol-lowb}
|B_R(\p)| \geq \dm w(\dm) \int_0^R \left(\frac{\sinh(\sqrt{c_M}\theta)}{\sqrt{c_M}}\right)^{\dm-1}\d \theta.
\end{equation}
Note that when $c_M=0$, \eqref{eqn:vol-lowb} reduces to
\begin{equation}
\label{eqn:vol-lowb-0}
|B_R(\p)| \geq \dm w(\dm) \int_0^R \theta^{\dm-1}\d \theta = w(\dm) R^\dm,
\end{equation}
which states that $|B_R(\p)|$ is bounded below by the volume of the ball of radius $R$ in $\bbr^d$. The entropy term for $\rho_R$ is given explicitly by
\begin{equation}
\label{eqn:entropy-rhoR}
\frac{1}{\m-1}\int_M \rho_R(x)^{\m}dx=-\frac{|B_R(\p)|^{1-\m}}{1-\m}.
\end{equation}

Also, by the monotonicity of $h$, the interaction term can be estimated as 
\begin{equation}
\label{eqn:intenergy-rhoR}
\begin{aligned}
\frac{1}{2}\iint_{M \times M} h(\dist(x, y))\rho_R(x)\rho_R(y)\d x\d y &\leq \frac{1}{2}\iint_{M \times M} h(2R)\rho_R(x)\rho_R(y)\d x \d y \\
&=\frac{h(2R)}{2}.
\end{aligned}
\end{equation}
Using the two equations above in \eqref{eqn:energy}, we then have
\begin{align}\label{Eupperbound}
E[\rho_R]\leq -\frac{|B_R(\p)|^{1-\m}}{1-\m}+\frac{h(2R)}{2}.
\end{align}

\noindent {\em Behaviour at zero.} Using \eqref{Eupperbound} and $0<\m<1$, we get
\[
\lim_{R\to0+}E[\rho_R]\leq \lim_{R\to0+}\left(-\frac{|B_R(\p)|^{1-\m}}{1-\m}+\frac{h(2R)}{2}\right)=\frac{1}{2}\lim_{R\to0+}h(2R).
\]
This implies that, if $h$ is singular at $0+$ (i.e., $\lim_{\theta \to 0^+} h(\theta)= -\infty$), then the energy is unbounded from below and there is no global energy minimizer.
\smallskip

\noindent {\em Behaviour at infinity.} By combining \eqref{eqn:vol-lowb} and \eqref{Eupperbound}, we get
\[
\lim_{R\to\infty}E[\rho_R]\leq \lim_{R\to\infty}\left(-\frac{1}{1-\m}\left(\dm w(\dm) \int_0^R\left(\frac{\sinh(\sqrt{c_M}\theta)}{\sqrt{c_M}}\right)^{\dm-1}\d\theta\right)^{1-\m}+\frac{h(2R)}{2}\right).
\] 
Similar to the behaviour at zero, provided the limit in the right-hand-side above is $-\infty$, $E$ is not bounded from below and hence, it has no global minimizer. We quantify this behaviour in terms of $h$, as following. 

By L'H\^opital's rule, we have
\begin{align*}
\lim_{R\to\infty}\frac{\int_0^R\left(\frac{\sinh(\sqrt{c_M}\theta)}{\sqrt{c_M}}\right)^{\dm-1}\d\theta}{\exp(\sqrt{c_M}(\dm-1)R)}
&=\lim_{R\to\infty}\frac{
\left(\frac{\sinh(\sqrt{c_M}R)}{\sqrt{c_M}}\right)^{\dm-1}
}{\sqrt{c_M}(\dm-1)\exp(\sqrt{c_M}(\dm-1)R)}\\
&=\frac{1}{2^{\dm-1}\sqrt{c_M}^{\dm}(\dm-1)}.
\end{align*}
If $h$ satisfies
\[
\lim_{R\to\infty}\frac{h(R)} {\exp\left(\frac{\sqrt{c_M}(\dm-1)(1-\m)}{2}R\right)}=0\quad\Longleftrightarrow\quad\lim_{R\to\infty}\frac{h(2R)}{\exp(\sqrt{c_M}(\dm-1)(1-\m)R)}=0,
\]
then we have
\begin{align*}
&\lim_{R\to\infty}\left(-\frac{1}{1-\m}\left(\dm w(\dm) \int_0^R\left(\frac{\sinh(\sqrt{c_M}\theta)}{\sqrt{c_M}}\right)^{\dm-1}\d\theta\right)^{1-\m}+\frac{h(2R)}{2}\right)\\
&\quad = \lim_{R\to\infty}\exp(\sqrt{c_M}(\dm-1)(1-\m)R)  \\ 
& \qquad \qquad \times \left(-\frac{1}{1-\m}\left( \frac{\dm w(\dm) \int_0^R\left(\frac{\sinh(\sqrt{c_M}\theta)}{\sqrt{c_M}}\right)^{\dm-1}\d\theta}{\exp(\sqrt{c_M}(\dm-1)R)}\right)^{1-\m}+\frac{h(2R)}{2\exp(\sqrt{c_M}(\dm-1)(1-\m)R)}\right)\\
&\quad = \infty\times\left(-\frac{1}{1-\m} \left(\frac{\dm w(\dm)}{2^{\dm-1}\sqrt{c_M}^{\dm}(\dm-1)}\right)^{1-\m}\right)=-\infty.
\end{align*}

The two cases prove the following theorem.
\begin{theorem}[Nonexistence by blow-up or spreading]
\label{thm:nonexist}
Let $M$ be a $\dm$-dimensional Cartan-Hadamard manifold with sectional curvatures that satisfy \eqref{eqn:K-upperb} for a positive constant $c_M$.  Let $0<\m<1$ and assume that $h:[0,\infty) \to [-\infty,\infty)$ is a non-decreasing lower semi-continuous function which satisfies either
\[
\lim_{\theta\to0+}h(\theta)=-\infty,
\]
or
\begin{align}\label{non-exist-spread}
\lim_{\theta \to\infty}\frac{h(\theta )}{\exp\left(\frac{\sqrt{c_M}(\dm-1)(1-\m)}{2}\, \theta \right)}=0.
\end{align}
Then, the energy functional \eqref{eqn:energy} is unbounded from below, and hence has no global minimizer in $\calP(M)$.
\end{theorem}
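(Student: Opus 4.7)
The plan is to exhibit a one-parameter family of admissible probability measures along which $E$ tends to $-\infty$ under each of the two hypotheses. Following the same strategy that works in the Euclidean case, I would use the normalized indicator measures $\rho_R=|B_R(\p)|^{-1}\ind_{B_R(\p)}$ for $R>0$. These are radially symmetric about $\p$, so their Riemannian centre of mass is $\p$, placing them in $\calPo(M)$ (assuming the admissibility framework of Section~2). Two limiting regimes, $R\to 0^+$ (concentration) and $R\to\infty$ (spreading), will correspond to the two hypotheses on $h$.

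First I would split $E[\rho_R]$ into its two pieces. The entropy is computed explicitly: $\frac{1}{\m-1}\int_M\rho_R^\m\dV=-|B_R(\p)|^{1-\m}/(1-\m)$. For the interaction piece, since $\mathrm{diam}(\supp\rho_R)\le 2R$ and $h$ is non-decreasing, one gets the clean bound $\frac{1}{2}\iint h(\dist(x,y))\d\rho_R\d\rho_R\le h(2R)/2$. Combining these yields
\begin{align*}
E[\rho_R]\;\le\;-\frac{|B_R(\p)|^{1-\m}}{1-\m}+\frac{h(2R)}{2}.
\end{align*}

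For the blow-up case, I would simply let $R\to 0^+$: since $1-\m>0$ and $|B_R(\p)|\to 0$, the entropy term tends to $0$, while $h(2R)/2\to-\infty$ by assumption; hence $E[\rho_R]\to-\infty$. For the spreading case, I would invoke the volume lower bound of Theorem~\ref{lemma:Chavel-thms},
\begin{align*}
|B_R(\p)|\;\ge\;\dm\,w(\dm)\int_0^R\bigl(\sinh(\sqrt{c_M}\theta)/\sqrt{c_M}\bigr)^{\dm-1}\d\theta,
\end{align*}
and a single application of L'H\^opital's rule to identify the asymptotic exponential rate of this integral as $\exp(\sqrt{c_M}(\dm-1)R)$, with explicit prefactor $1/\bigl(2^{\dm-1}c_M^{\dm/2}(\dm-1)\bigr)$. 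Raising to the power $1-\m$ shows that $-|B_R(\p)|^{1-\m}/(1-\m)$ decays like $-C\exp(\sqrt{c_M}(\dm-1)(1-\m)R)$ for some $C>0$. Factoring out this exponential from the upper bound on $E[\rho_R]$, hypothesis \eqref{non-exist-spread} forces the bracket in the factored expression to tend to a strictly negative constant, so $E[\rho_R]\to-\infty$.

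The main subtlety is bookkeeping the critical exponential rate: the factor of $2$ hidden in $h(2R)$ together with the exponent $1-\m$ from the entropy term must combine correctly to produce the threshold rate $\frac{\sqrt{c_M}(\dm-1)(1-\m)}{2}$ stated for $h(\theta)$ in \eqref{non-exist-spread}. Beyond that, the argument is essentially a direct trial-function computation; there is no need to pass to the lower semicontinuous extension of $E$ since the chosen $\rho_R$ are already absolutely continuous and bounded, so $E[\rho_R]$ agrees with the direct formula.
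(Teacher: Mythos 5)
Your proposal is correct and essentially identical to the paper's own proof: same trial family $\rho_R$ of normalized indicators of geodesic balls, same explicit entropy computation and monotonicity bound $h(2R)/2$ on the interaction, same volume lower bound from Theorem \ref{lemma:Chavel-thms} with a L'H\^opital step to extract the rate $\exp(\sqrt{c_M}(\dm-1)R)$ and prefactor $1/(2^{\dm-1}c_M^{\dm/2}(\dm-1))$, and the same factoring argument yielding the threshold $\tfrac{\sqrt{c_M}(\dm-1)(1-\m)}{2}$. The only thing to drop is the aside that $\rho_R\in\calPo(M)$ by radial symmetry --- on a general Cartan-Hadamard manifold the Jacobian of $\exp_\p$ need not be even in $u$, so the centre-of-mass condition is not automatic, but this is immaterial since the theorem asserts nonexistence in all of $\calP(M)$.
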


\begin{remark}[Comparison with linear and slow diffusion]
\label{rmk:compare}
 The necessary conditions in Theorem \ref{thm:nonexist} say that no singularity at origin in the interaction potential is allowed, or else nonexistence by blow-up occurs. Also, the potential needs to grow exponentially fast at infinity, in order to contain the spreading. These conditions are much more restrictive than those for the model with linear diffusion ($\m=1$) or slow diffusion ($\m>1$). For the latter models, ground states can exist if $h$ is singular at origin (with at most a logarithmic or a power-law singularity for $\m=1$ and $\m>1$, respectively), or if $h$ grows less than exponentially at infinity (superlinear growth for $\m=1$ and no growth rate restriction for $\m>1$). We refer the reader to \cite{FePa2024a, FePa2024b} for the precise statements.
\end{remark}


\section{A generalized Carlson--Levin inequality}
\label{sect:C-Lineq}
\setcounter{equation}{0}

This section contains some preparatory results needed to show the existence of global minimizers. Here we assume that the sectional curvatures of the Cartan-Hadamard manifold $M$ satisfy
\begin{equation}
\label{eqn:K-lowerb}
-c_m \leq \mathcal{K}(x;\sigma)\leq 0, 
\end{equation}
for all $x \in M$ and all two-dimensional subspaces $\sigma \subset T_x M$, where $c_m$ is a positive constant.

\subsection{A convexity argument}  Recall Theorem \ref{RCT} and apply it for the manifold $M$ (note that $M$ has non-positive curvature), for $\tilde{M}=\bbr^\dm\simeq T_\p M$, and $i$ the identity map. For two points $x$ and $y$ in $M$, take $c$ to be the geodesic curve joining them. The curve $\tilde{c}$ constructed in Theorem \ref{RCT} joins $\log_\p x$ and $\log_\p y$, and hence, its length is greater than or equal to the length of the straight segment joining $\log_\p x$ and $\log_\p y$. We then infer from Theorem \ref{RCT} that
\begin{equation}
\label{eqn:Rauch-est1}
|\log_\p x - \log_\p y| \leq l(\tilde{c}) \leq l(c) = \dist(x,y).
\end{equation}

By the Law of cosines, we also get
\[
|\log_\p x-\log_\p y|=\sqrt{r_x^2+r_y^2-2r_xr_y\cos\angle(x\p y)},
\]
which combined with \eqref{eqn:Rauch-est1} yields
\begin{align}\label{Rauchest}
\dist(x, y)^2\geq r_x^2+r_y^2-2 r_xr_y\cos\angle(x\p y).
\end{align}
Then, from \eqref{Rauchest} we find
\begin{equation}
\label{distestE}
\dist(x, y)^2 \geq |r_x-r_y\cos\angle(x\p y)|^2, \qquad \text{ for all } x,y \in M.
\end{equation}

We will use \eqref{distestE} to prove the following proposition.
\begin{proposition}\label{convineq}
Let $M$ be a Cartan--Hadamard manifold, $\p \in M$ an arbitrary fixed pole, and $\mathcal{H}:[0, \infty)\to[0, \infty)$ be a non-decreasing convex function. Then, for any $\mu\in \calPo(M)$, we have
\begin{equation}
\label{eqn:convineq}
\iint_{M\times M}\mathcal{H}(\dist(x, y))\d\mu(x)\d\mu(y)
\geq \int_M \mathcal{H}(r_x)\d\mu(x).
\end{equation}
\end{proposition}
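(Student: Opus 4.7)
The plan is to feed the pointwise estimate \eqref{distestE} into a Jensen-type argument and exploit the centre-of-mass condition built into $\calPo(M)$.

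Step one: use the preparatory inequality \eqref{distestE} together with monotonicity of $\mathcal{H}$ to obtain, for all $x,y\in M$,
\begin{equation*}
\mathcal{H}(\dist(x,y)) \;\geq\; \mathcal{H}\bigl(\,\bigl|r_x - r_y\cos\angle(x\p y)\bigr|\,\bigr).
\end{equation*}
Step two: observe that the map $\tilde{\mathcal{H}}:\bbr\to[0,\infty)$ defined by $\tilde{\mathcal{H}}(t):=\mathcal{H}(|t|)$ is convex on $\bbr$. Indeed, for $t,s\in\bbr$ and $\lambda\in[0,1]$, the triangle inequality $|\lambda t+(1-\lambda)s|\leq \lambda|t|+(1-\lambda)|s|$ combined with monotonicity of $\mathcal{H}$ and its convexity on $[0,\infty)$ gives the required inequality. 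This lets me drop the absolute value after applying Jensen.

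Step three: fix $x\in M$ and apply Jensen's inequality to the inner integral with respect to $\mu$ in the variable $y$:
\begin{equation*}
\int_M \tilde{\mathcal{H}}\bigl(r_x - r_y\cos\angle(x\p y)\bigr)\,\d\mu(y)
\;\geq\; \tilde{\mathcal{H}}\!\left(r_x - \int_M r_y\cos\angle(x\p y)\,\d\mu(y)\right).
\end{equation*}
The key identification is that $r_y\cos\angle(x\p y)$ is precisely the inner product in $T_\p M$ of $\log_\p y$ with the unit vector $u_x:=\log_\p x/r_x$, because the angle $\angle(x\p y)$ equals the angle between $\log_\p x$ and $\log_\p y$ in $T_\p M$. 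Hence
\begin{equation*}
\int_M r_y\cos\angle(x\p y)\,\d\mu(y) \;=\; \Bigl\langle u_x,\, \int_M \log_\p y\,\d\mu(y)\Bigr\rangle \;=\; 0,
\end{equation*}
by the centre-of-mass condition in the definition \eqref{eqn:calP-1o} of $\calPo(M)$. Therefore the right-hand side of Jensen reduces to $\tilde{\mathcal{H}}(r_x)=\mathcal{H}(r_x)$.

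Step four: integrate the resulting pointwise bound $\int_M\mathcal{H}(\dist(x,y))\,\d\mu(y)\geq \mathcal{H}(r_x)$ against $\d\mu(x)$ to obtain \eqref{eqn:convineq}. The only mildly delicate point is the verification that $t\mapsto \mathcal{H}(|t|)$ is convex on all of $\bbr$ so that Jensen can be applied after passing through the absolute value; everything else is a direct combination of \eqref{distestE}, the tangent-space interpretation of the angle at $\p$, and the centre-of-mass hypothesis.
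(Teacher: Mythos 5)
Your proof is correct and follows essentially the same route as the paper's: the Rauch-derived bound \eqref{distestE}, Jensen's inequality in the $y$-variable, and the centre-of-mass condition \eqref{cossum}. The only (valid) variation is that you absorb the absolute value into the convex function $t\mapsto\mathcal{H}(|t|)$ before applying Jensen, whereas the paper applies Jensen to $\mathcal{H}$ first and then handles the absolute value via $\int|f|\,\d\mu\geq\bigl|\int f\,\d\mu\bigr|$; both arguments require the same trivial separate check at $x=\p$ (where $u_x$ is undefined), which the paper makes explicit and you leave implicit.
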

\begin{proof}
As $\mu\in \calPo(M)$ has Riemannian centre of mass at  $\p$, it holds that
\[
\int_M \log_\p y \, \d \mu(y)=0.
\]
Since $\log_\p\p$ is the zero vector in $T_\p M$, we have
\[
\int_{M\backslash\{\p\}}\log_\p y\, \d\mu(y)=\int_M\log_\p y\, \d\mu(y)=0.
\]
Then, for any $x \neq \p$ we have
\[
0=\int_{M\backslash\{\p\}}\log_\p x\cdot \log_\p y\, \d \mu(y)=r_x\int_{M\backslash\{\p\}}r_y\cos\angle(x\p y)\, \d \mu(y),
\]
and hence,
\begin{align}\label{cossum}
\int_{M\backslash\{\p\}}r_y\cos\angle(x\p y)\, \d \mu(y)=0.
\end{align}

Using Jensen's inequality, the monotonicity of $\mathcal{H}$, and \eqref{distestE}, we get
\begin{align}
\label{eqn:Hineq1}
\begin{aligned}
\int_M\mathcal{H}(\dist(x, y))\d\mu(y)&\geq\mathcal{H}\left(\int_M\dist(x, y)\d\mu(y)\right)\\
&=\mathcal{H}\left(
\int_{M\backslash\{\p\}}\dist(x, y)\d\mu(y)+\int_{\{\p\}}\dist(x, y)\d\mu(y)
\right)
\\
&\geq \mathcal{H}\left(
\int_{M\backslash\{\p\}}|r_x-r_y\cos\angle(x\p y)|\d\mu(y)+\int_{\{\p\}}r_x\, \d\mu(y)
\right).
\end{aligned}
\end{align}
Furthermore, using\eqref{cossum} we find
\begin{equation}
\label{eqn:Harg}
\begin{aligned}
&\int_{M\backslash\{\p\}}|r_x-r_y\cos\angle(x\p y)|\d\mu(y)+\int_{\{\p\}}r_x\d\mu(y)\\[2pt]
&\qquad \geq \left|
\int_{M\backslash\{\p\}}(r_x-r_y\cos\angle(x\p y))\d\mu(y)
\right|+\int_{\{\p\}}r_x\d\mu(y)\\
&\qquad =\int_{M\backslash\{\p\}}r_x\d\mu(y)+\int_{\{\p\}}r_x \d\mu(y)\\
&\qquad =r_x.
\end{aligned}
\end{equation}

Substitute \eqref{eqn:Harg} into \eqref{eqn:Hineq1} to conclude that
\begin{align}\label{eqn:Hineq1-1}
\int_M\mathcal{H}(\dist(x, y))\d\mu(y)\geq \mathcal{H}(r_x),
\end{align}
for any $x\neq \p$. For $x=\p$, \eqref{eqn:Hineq1-1} also holds, as
\[
\int_M\mathcal{H}(r_y)\d\mu(y)\geq\int_M\mathcal{H}(0)\d\mu(y)=\mathcal{H}(0).
\]
Hence, \eqref{eqn:Hineq1-1} holds for all $x\in M$. Then, by integrating \eqref{eqn:Hineq1-1} with respect to $\mu(x)$ on $M$ we find \eqref{eqn:convineq}.
\end{proof}

\begin{remark}\label{R4.1} 
If $\mu$ is not a probability measure, \eqref{eqn:convineq} gains an additional term as follows. Assume $\mu\in\mathcal{M}_+(M)\cap L^1(M)$, where $\mathcal{M}_+(M)$ denotes the set of non-negative measures on $M$, satisfies $\displaystyle \|\mu\|_1:=\int_M\d\mu(x)\neq1$, $\displaystyle \int_M r_x\, \d\mu(x)<\infty$, and $\displaystyle \int_M\log_\p x \, \d\mu(x)=0$. Then, we can substitute $\displaystyle \frac{\mu}{\|\mu\|_1}\in\mathcal{P}_{1, \p}(M)$ into Proposition \ref{convineq} to get
\[
\iint_{M\times M}\mathcal{H}(\dist(x, y))\d\mu(x)\d\mu(y)
\geq \left(\int_M \mathcal{H}(r_x)\d\mu(x)\right)\left(\int_M\d\mu(x)\right),
\]
for any non-decreasing convex function $\mathcal{H}:[0, \infty)\to[0, \infty)$.
\end{remark}


\subsection{Generalized Carlson-Levin inequality} The following lemma generalizes the Carlson-Levin inequality \cite[Lemma 5]{carrillo2019reverse} used to study free energy minimization in the Euclidean space $\bbr^\dm$.

\begin{lemma}[Generalized Carlson--Levin inequality]\label{mainineq}
Let $0<q<1$ and $M$ be a $\dm$-dimensional Cartan--Hadamard manifold whose sectional curvatures satisfy \eqref{eqn:K-lowerb}, for some $c_m>0$. Also, let $o \in M$ be a fixed pole. If $\lambda>\frac{(\dm-1)(1-q)}{q}$, then there exists a positive constant $C_1>0$ that depends on $\lambda, q, c_m$, and $\dm$, such that
\[
\int_M\rho(x)^q \dx\leq C_1(\lambda, q, c_m, \dm)\left(\int_M\rho(x) \dx\right)^{(1-p)q}\left(\int_M\left(\frac{\sinh(\sqrt{c_m}r_x)}{\sqrt{c_m}}\right)^\lambda \rho(x) \dx\right)^{pq},
\]
for any $\rho(\cdot) \geq 0$, where $p=\frac{(\dm-1)(1-q)}{\lambda q}\in(0, 1)$. 
\end{lemma}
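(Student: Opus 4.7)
The approach is the classical Carlson--Levin splitting technique, with the Euclidean weight $|x|$ replaced by $\psi_m(r_x):=\sinh(\sqrt{c_m}\,r_x)/\sqrt{c_m}$. This choice is natural because $\psi_m^{\dm-1}$ controls the volume growth of $M$ via Theorem \ref{lemma:Chavel-thms}. Writing $A:=\int_M \rho\,\dV$ and $B:=\int_M \psi_m(r_x)^\lambda \rho\,\dV$, one may assume $0<A,B<\infty$, else the inequality is trivial. For a parameter $R>0$ to be chosen, I split
\[
\int_M \rho^q\,\dV \;=\; \int_{B_R(\p)} \rho^q\,\dV \;+\; \int_{M\setminus B_R(\p)} \rho^q\,\dV,
\]
and apply H\"older with exponents $1/q$ and $1/(1-q)$ on each piece. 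The inner integral is bounded by $A^q |B_R(\p)|^{1-q}$, and rewriting $\rho^q = (\rho\psi_m^\lambda)^q \psi_m^{-\lambda q}$ gives, for the outer one, the bound $B^q\bigl(\int_{M\setminus B_R(\p)} \psi_m(r_x)^{-\lambda q/(1-q)}\,\dV\bigr)^{1-q}$.

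Next, I invoke the volume comparison $|B_R(\p)| \leq \omega(\dm)\int_0^R \psi_m(r)^{\dm-1}\,\d r$ from Theorem \ref{lemma:Chavel-thms}, and the pointwise Jacobian bound in geodesic polar coordinates at $\p$ yields
\[
\int_{M\setminus B_R(\p)} \psi_m(r_x)^{-\lambda q/(1-q)}\,\dV \;\leq\; \omega(\dm)\int_R^\infty \psi_m(r)^{-\beta}\,\d r, \qquad \beta := \tfrac{\lambda q}{1-q} - (\dm-1).
\]
The hypothesis $\lambda > (\dm-1)(1-q)/q$ is precisely what yields $\beta>0$, securing convergence at infinity; note however that this integral diverges as $R\to 0^+$, which is why the splitting is essential rather than a single global H\"older.

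To convert both tails into clean powers of a single parameter, I exploit $\sqrt{c_m}\,\psi_m(r) \leq \psi_m'(r)$ (equivalently $\sinh\leq\cosh$). Multiplying each integrand by $\psi_m'/(\sqrt{c_m}\,\psi_m)\geq 1$ and using $\int \psi_m^{k-1}\psi_m'\,\d r=\psi_m^k/k$ gives
\[
\int_0^R \psi_m(r)^{\dm-1}\,\d r \leq \frac{\psi_m(R)^{\dm-1}}{(\dm-1)\sqrt{c_m}}, \qquad \int_R^\infty \psi_m(r)^{-\beta}\,\d r \leq \frac{\psi_m(R)^{-\beta}}{\beta\sqrt{c_m}}.
\]
Setting $t:=\psi_m(R)\in(0,\infty)$, the combined estimate takes the clean form
\[
\int_M \rho^q\,\dV \;\leq\; c_1 A^q\, t^{(\dm-1)(1-q)} + c_2 B^q\, t^{-\beta(1-q)},
\]
with $c_1,c_2>0$ depending only on $\lambda,q,c_m,\dm$.

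The final step is to minimize the right-hand side over $t>0$. The key algebraic cancellation is that the two positive exponents add to $\lambda q$:
\[
(\dm-1)(1-q) + \beta(1-q) \;=\; (1-q)\cdot\frac{\lambda q}{1-q} \;=\; \lambda q,
\]
which is exactly what makes the balancing homogeneous of the right degree. Setting the $t$-derivative to zero gives $t_\ast = C\,(B/A)^{1/\lambda}$, and substituting back shows that both terms are proportional to $A^{(1-p)q} B^{pq}$ with $p=(\dm-1)(1-q)/(\lambda q)$, delivering the claimed inequality with an explicit constant $C_1(\lambda,q,c_m,\dm)$. No delicate obstacle arises; the crux is simply that $\psi_m(r)$ (rather than the Euclidean $r$) is the correct weight, since without it the exponent identity above would fail and the two tails would not balance into the homogeneous product required by the statement.
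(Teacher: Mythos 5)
Your proof is correct and follows essentially the same route as the paper's: split at radius $R$, apply H\"older on each piece, use the volume and Jacobian comparison bounds of Theorem \ref{lemma:Chavel-thms}, reduce both tails to powers of $\sinh(\sqrt{c_m}R)/\sqrt{c_m}$, and optimize over $R$, arriving at the same exponents $\beta_1=(\dm-1)(1-q)$ and $\beta_2(1-q)$ summing to $\lambda q$. The only (cosmetic) difference is how you bound the two radial integrals --- via $\coth(\sqrt{c_m}r)\geq 1$ and exact antiderivatives rather than the paper's exponential rewriting of $\sinh$ --- which yields the identical estimates.
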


\begin{proof}
Fix $R>0$ and $\rho(\cdot) \geq 0$. By H\"{o}lder inequality with exponents $\frac{1}{q}$ and $\frac{1}{1-q}$, we get
\begin{equation}
\label{eqn:est-tlr}
\begin{aligned}
\int_{r_x<R}\rho(x)^q \dx&\leq \left(\int_{r_x<R}\rho(x) \dx\right)^q\left(\int_{r_x< R}1\, \dx \right)^{1-q}\\[5pt]
&\leq\left(\int_M\rho(x) \dx \right)^q\left(\dm w(\dm)\int_0^R\left(\frac{\sinh(\sqrt{c_m}\theta)}{\sqrt{c_m}}\right)^{\dm-1}\d\theta\right)^{1-q},
\end{aligned}
\end{equation}
where for the second inequality we used the volume upper bound in Theorem \ref{lemma:Chavel-thms}. Similarly, also by H\"{o}lder inequality with exponents $\frac{1}{q}$ and $\frac{1}{1-q}$, we find
\begin{equation}
\label{eqn:est-tgr}
\begin{aligned}
&\int_{r_x\geq R}\rho(x)^q \dx \\
&\quad \leq \left(\int_{r_x\geq R}
\left(\frac{\sinh(\sqrt{c_m}r_x)}{\sqrt{c_m}}\right)^\lambda\rho(x) \dx
\right)^q
\left(
\int_{r_x\geq R}\left(\frac{\sinh(\sqrt{c_m}r_x)}{\sqrt{c_m}}\right)^{-\frac{\lambda q}{1-q}} \dx
\right)^{1-q}\\[5pt]
& \quad \leq  \left(\int_M
\left(\frac{\sinh(\sqrt{c_m}r_x)}{\sqrt{c_m}}\right)^\lambda\rho(x) \dx
\right)^q
\left(
\dm w(\dm)\int_R^\infty \left(\frac{\sinh(\sqrt{c_m}\theta)}{\sqrt{c_m}}\right)^{\dm-1-\frac{\lambda q}{1-q}}\d\theta\right)^{1-q},
\end{aligned}
\end{equation}
where for the second inequality we used geodesic spherical coordinates $x = \exp_\p(\theta u)$, with $\theta>0$ and $u \in \bbs^{\dm-1}$, to change the variable in the second integral, and reasoned as follows. First note that $(\theta,u)$ are spherical coordinates in the tangent space $T_\p M$, with volume element $\theta^{\dm -1} \d \theta \d \sigma(u)$, where $\d \sigma(u)$ denotes the Riemanian measure on $\bbs^{\dm -1}$. Then, using the Jacobian upper bound in Theorem \ref{lemma:Chavel-thms} we have
\begin{equation}
\label{eqn:dV-bound}
\begin{aligned}
\d V(x) &= J(\exp_\p)(\theta u) \, \theta^{\dm -1} \d \theta \d \sigma(u) \\[2pt]
& \leq  \left(\frac{\sinh(\sqrt{c_m}\theta)}{\sqrt{c_m}}\right)^{\dm-1} \d \theta \d \sigma(u),
\end{aligned}
\end{equation}
from which the second inequality in \eqref{eqn:est-tgr} follows.


Denote
\begin{align*}
&A(R):=\left(\dm w(\dm)\int_0^R\left(\frac{\sinh(\sqrt{c_m}\theta)}{\sqrt{c_m}}\right)^{\dm-1}\d\theta\right)^{1-q},\\[2pt]
&B(R):=\left(
\dm w(\dm)\int_R^\infty \left(\frac{\sinh(\sqrt{c_m}\theta)}{\sqrt{c_m}}\right)^{\dm-1-\frac{\lambda q}{1-q}}\d\theta\right)^{1-q}.
\end{align*}
Then, by combining \eqref{eqn:est-tlr} and \eqref{eqn:est-tgr} we get
\begin{equation}
\label{eqn:est-rhoq}
\int_M\rho(x)^q \dx\leq A(R)\left(\int_M
\rho(x) \dx
\right)^q+B(R)\left(\int_M
\left(\frac{\sinh(\sqrt{c_m}r_x)}{\sqrt{c_m}}\right)^\lambda\rho(x)\dx
\right)^q.
\end{equation}

Now use
\[
\frac{\sinh(\sqrt{c_m}\theta)}{\sqrt{c_m}}=\frac{e^{\sqrt{c_m}\theta}}{2\sqrt{c_m}}(1-e^{-2\sqrt{c_m}\theta})\leq \frac{e^{\sqrt{c_m}\theta}}{2\sqrt{c_m}}(1-e^{-2\sqrt{c_m}R}), \qquad \forall\theta\in[0, R],
\]
to estimate $A(R)$ as
\begin{equation}
\label{eqn:estA}
\begin{aligned}
A(R)&\leq \left(\dm w(\dm)\int_0^R\left(\frac{e^{\sqrt{c_m}\theta}}{2\sqrt{c_m}}(1-e^{-2\sqrt{c_m}R})\right)^{\dm-1}\d\theta\right)^{1-q}\\[2pt]
&=\left(\frac{\dm w(\dm)(1-e^{-2\sqrt{c_m}R})^{\dm-1}}{(2\sqrt{c_m})^{\dm-1}\sqrt{c_m}(\dm-1)}\left(e^{\sqrt{c_m}(\dm-1)R}-1\right)
\right)^{1-q}\\[2pt]
&\leq\left(\frac{\dm w(\dm)(1-e^{-2\sqrt{c_m}R})^{\dm-1}}{(2\sqrt{c_m})^{\dm-1}\sqrt{c_m}(\dm-1)}e^{\sqrt{c_m}(\dm-1)R}
\right)^{1-q}\\[2pt]
&=\left(\frac{\dm w(\dm)}{\sqrt{c_m}(\dm-1)}\right)^{1-q}\left(\frac{\sinh(\sqrt{c_m}R)}{\sqrt{c_m}}\right)^{(1-q)(\dm-1)}.
\end{aligned}
\end{equation}
Similarly, use
\[
\frac{\sinh(\sqrt{c_m}\theta)}{\sqrt{c_m}}=\frac{e^{\sqrt{c_m}\theta}}{2\sqrt{c_m}}(1-e^{-2\sqrt{c_m}\theta})\geq \frac{e^{\sqrt{c_m}\theta}}{2\sqrt{c_m}}(1-e^{-2\sqrt{c_m}R}), \qquad \forall \theta\in[R, \infty),
\]
to estimate $B(R)$ (recall that $d-1-\frac{\lambda q}{1-q}<0$) as
\begin{equation}
\label{eqn:estB}
\begin{aligned}
B(R)&\leq\left(
\dm w(\dm)\int_R^\infty \left( \frac{e^{\sqrt{c_m}\theta}}{2\sqrt{c_m}}(1-e^{-2\sqrt{c_m}R})\right)^{\dm-1-\frac{\lambda q}{1-q}}\d\theta\right)^{1-q}\\[2pt]
&=\left(
\frac{\dm w(\dm)(1-e^{-2\sqrt{c_m}R})^{\dm-1-\frac{\lambda q}{1-q}}}{(2\sqrt{c_m})^{\dm-1-\frac{\lambda q}{1-q}}}
\int_R^\infty  e^{\sqrt{c_m} \left(\dm-1-\frac{\lambda q}{1-q} \right)\theta}\d\theta
\right)^{1-q}\\[2pt]
&=\left(
\frac{\dm w(\dm)(1-e^{-2\sqrt{c_m}R})^{\dm-1-\frac{\lambda q}{1-q}}}{(2\sqrt{c_m})^{\dm-1-\frac{\lambda q}{1-q}} \sqrt{c_m} \left(
-\dm+1+\frac{\lambda q}{1-q} \right)}
 e^{\sqrt{c_m}\left(\dm-1-\frac{\lambda q}{1-q}\right)R}
\right)^{1-q}\\
&=\left(\frac{\dm w(\dm)(1-q)}{\sqrt{c_m}(\lambda q-(1-q)(\dm-1))}\right)^{1-q}\left(\frac{\sinh(\sqrt{c_m}R)}{\sqrt{c_m}}\right)^{(1-q)(d-1)-\lambda q}.
\end{aligned}
\end{equation}

Make the following notations (by the assumption on $\lambda$, we have $\lambda q - (1-q) (\dm-1) >0$):
\begin{equation}
\label{eqn:alphabeta}
\begin{aligned}
\displaystyle\alpha_1&=\left(\frac{\dm w(\dm)}{\sqrt{c_m}(\dm-1)}\right)^{1-q},\\[5pt]
\displaystyle\alpha_2&=\left(\frac{\dm w(\dm)(1-q)}{\sqrt{c_m}(\lambda q-(1-q)(\dm-1))}\right)^{1-q},\\[5pt]
\displaystyle\beta_1&=(1-q)(\dm-1),\\[5pt]
\displaystyle\beta_2&=\lambda q-(1-q)(\dm-1).
\end{aligned}
\end{equation}
Note that $\alpha_1, \alpha_2, \beta_1, \beta_2$ are positive constants that depend on $\lambda, q, c_m$, and $\dm$. With these notations, we write \eqref{eqn:estA} and \eqref{eqn:estB} compactly as
\[
A(R)\leq \alpha_1\left(\frac{\sinh(\sqrt{c_m}R)}{\sqrt{c_m}}\right)^{\beta_1}, \qquad\text{ and }\qquad B(R)\leq \alpha_2\left(\frac{\sinh(\sqrt{c_m}R)}{\sqrt{c_m}}\right)^{-\beta_2}, \qquad \forall R>0.
\]

Using the estimates above in \eqref{eqn:est-rhoq}, we have
\begin{equation}
\label{nonoptR}
\begin{aligned}
\int_M \rho(x)^q \dx& \leq \alpha_1\left(\frac{\sinh(\sqrt{c_m}R)}{\sqrt{c_m}}\right)^{\beta_1}\left(\int_M
\rho(x) \dx
\right)^q \\[5pt]
& \quad +\alpha_2\left(\frac{\sinh(\sqrt{c_m}R)}{\sqrt{c_m}}\right)^{-\beta_2}\left(\int_M
\left(\frac{\sinh(\sqrt{c_m}r_x)}{\sqrt{c_m}}\right)^\lambda\rho(x) \dx
\right)^q.
\end{aligned}
\end{equation}
We are interested in finding the optimal $R$ for the right-hand-side of \eqref{nonoptR}. For this purpose, we calculate its derivative with respect to $R$, and set it to zero, to get
\begin{align*}
0&=\alpha_1\beta_1\left(\frac{\sinh(\sqrt{c_m}R)}{\sqrt{c_m}}\right)^{\beta_1-1} \cosh(\sqrt{c_m}R) \left(\int_M
\rho(x) \dx \right)^q\\[5pt]
&\quad -\alpha_2\beta_2\left(\frac{\sinh(\sqrt{c_m}R)}{\sqrt{c_m}}\right)^{-\beta_2-1} \cosh(\sqrt{c_m}R) \left(\int_M
\left(\frac{\sinh(\sqrt{c_m}r_x)}{\sqrt{c_m}}\right)^\lambda\rho(x) \dx \right)^q.
\end{align*}

After simplifying, we find
\begin{equation}
\label{eqn:Reqn}
\left(\frac{\sinh(\sqrt{c_m}R)}{\sqrt{c_m}}\right)^{\beta_1+\beta_2}=
\frac{\alpha_2\beta_2}{\alpha_1\beta_1}\left(
\frac{\int_M
\left(\frac{\sinh(\sqrt{c_m}r_x)}{\sqrt{c_m}}\right)^\lambda\rho(x) \dx}{\int_M
\rho(x) \dx}
\right)^q.
\end{equation}

Let $R_*$ be the unique solution to \eqref{eqn:Reqn} Now substitute $R_*$ into \eqref{nonoptR} to get
\begin{align*}
&\int_M \rho(x)^q\dx\\
&\leq \alpha_1\left(
\frac{\alpha_2\beta_2}{\alpha_1\beta_1}\left(
\frac{\int_M
\left(\frac{\sinh(\sqrt{c_m}r_x)}{\sqrt{c_m}}\right)^\lambda\rho(x)\dx}{\int_M
\rho(x)\dx}
\right)^q
\right)^{\frac{\beta_1}{\beta_1+\beta_2}}
\left(\int_M\rho(x)\dx\right)^q\\[5pt]
&\quad +\alpha_2
\left(
\frac{\alpha_1\beta_1}{\alpha_2\beta_2}\left(
\frac{\int_M
\rho(x)\dx}{\int_M
\left(\frac{\sinh(\sqrt{c_m}r_x)}{\sqrt{c_m}}\right)^\lambda\rho(x)\dx}
\right)^q
\right)^{\frac{\beta_2}{\beta_1+\beta_2}}
\left(\int_M
\left(\frac{\sinh(\sqrt{c_m}r_x)}{\sqrt{c_m}}\right)^\lambda\rho(x)\dx
\right)^q\\[5pt]
&=\left(\alpha_1^{\beta_2}\alpha_2^{\beta_1}\right)^{\frac{1}{\beta_1+\beta_2}}
\left(
\left(\frac{\beta_2}{\beta_1}\right)^{\frac{\beta_1}{\beta_1+\beta_2}} +
\left(\frac{\beta_1}{\beta_2}\right)^{\frac{\beta_2}{\beta_1+\beta_2}}
\right) \\
& \qquad \times 
\left(\int_M\rho(x) \dx\right)^{\frac{\beta_2 q}{\beta_1+\beta_2}}
\left(\int_M \left(\frac{\sinh(\sqrt{c_m}r_x)}{\sqrt{c_m}}\right)^\lambda\rho(x)\dx
\right)^{\frac{\beta_1q}{\beta_1+\beta_2}}\\[5pt]
&=C_1(\lambda, q, c_m, \dm)\left(\int_M\rho(x)\dx\right)^{\frac{\beta_2 q}{\beta_1+\beta_2}}
\left(\int_M \left(\frac{\sinh(\sqrt{c_m}r_x)}{\sqrt{c_m}}\right)^\lambda\rho(x)\dx
\right)^{\frac{\beta_1q}{\beta_1+\beta_2}},
\end{align*}
where
\[
C_1(\lambda, q, c_m, \dm) = \left(\alpha_1^{\beta_2}\alpha_2^{\beta_1}\right)^{\frac{1}{\beta_1+\beta_2}}
\left(
\left(\frac{\beta_2}{\beta_1}\right)^{\frac{\beta_1}{\beta_1+\beta_2}} +
\left(\frac{\beta_1}{\beta_2}\right)^{\frac{\beta_2}{\beta_1+\beta_2}}
\right).
\]

Using the expressions for $\beta_1$ and $\beta_2$ from \eqref{eqn:alphabeta}, we have
\[
\frac{\beta_1}{\beta_1+\beta_2}=\frac{(1-q)(\dm-1)}{\lambda q}, \qquad\text{ and }\qquad\frac{\beta_2}{\beta_1+\beta_2}=1-\frac{(1-q)(\dm-1)}{\lambda q}.
\]
Finally, we find
\[
\int_M\rho(x)^q\dx\leq C_1\left(\int_M\rho(x)\dx\right)^{(1-p)q}\left(\int_M\left(\frac{\sinh(\sqrt{c_m}r_x)}{\sqrt{c_m}}\right)^\lambda\rho(x)\dx\right)^{pq},
\]
where $p=\frac{(1-q)(\dm-1)}{\lambda q}\in(0,1)$..
\end{proof}

\begin{remark}\label{R4.2}
By combining Lemma \ref{mainineq} and Remark \ref{R4.1} one gets
\[
\int_M\rho(x)^q \dx\leq C_1\left(\int_M\rho(x) \dx\right)^{(1-2p)q}\left(\iint_{M\times M}\left(\frac{\sinh(\sqrt{c_m}\dist(x, y))}{\sqrt{c_m}}\right)^\lambda \rho(x)\rho(y)\dx \dy\right)^{pq}
\]
for all $\rho$ that satisfy the conditions in Remark \ref{R4.1}. This can be further transformed into
\begin{multline}
C_1^{-1/pq} \left( \int_M \rho(x)^q \dx \right)^{\frac{1}{pq}} \left(\int_M\rho(x) \dx\right)^{-\frac{(1-2p)q}{pq}} \\
\leq \iint_{M\times M}\left(\frac{\sinh(\sqrt{c_m}\dist(x, y))}{\sqrt{c_m}}\right)^\lambda \rho(x)\rho(y)\dx \dy.
\end{multline}

When $c_m\to0+$, we have
\[
\lim_{c_m\to0+}\frac{\sinh(\sqrt{c_m}\dist(x, y))}{\sqrt{c_m}}=\dist(x,y).
\]
Therefore, on $M=\bbr^\dm$, we can infer the inequality
\begin{align*}
C_1^{-1/pq} \left( \int_{\bbr^\dm} \rho(x)^q \d x \right)^{\frac{1}{pq}} \left(\int_{\bbr^\dm}\rho(x) \d x\right)^{-\frac{(1-2p)q}{pq}}
\leq \iint_{\bbr^\dm\times \bbr^\dm}\dist(x,y)^\lambda \rho(x)\rho(y)\d x \d y.
\end{align*}
which is a reversed HLS inequality with a non-optimal constant (see \cite[Theorem 1]{carrillo2019reverse}).
\end{remark}


\section{Existence of a global minimizer}
\label{sect:existence}
\setcounter{equation}{0}

In this section, we assume that the sectional curvatures $M$ satisfy \eqref{eqn:K-lowerb}, for some $c_m>0$, and investigate the existence of global energy minimizers in $\calPo(M)$ (see \eqref{eqn:calP-1o}), with $\p$ being an arbitrary fixed pole in $M$. 

Specifically, we assume that $h$ satisfies 
\begin{equation}
\label{eqn:condH0}
h(0)=0,
\end{equation}
and
\begin{equation}
\label{condiH}
\liminf_{\theta\to\infty}\frac{h(\theta)}{\exp(\sqrt{c_m}\lambda \theta)}>0,\qquad\text{ for some }\lambda>\frac{(\dm-1)(1-\m)}{\m}.
\end{equation}
Both conditions on $h$ in \eqref{eqn:condH0} and \eqref{condiH} complement those from Theorem \ref{thm:nonexist}, when nonexistence of minimizers occurs. In particular, a necessary condition for existence is that $h$ does not blow-up at origin; since the interaction potential is defined up to a constant, we set $h$ to be $0$ at origin. Also, the interaction potential needs to grow at least exponentially, or else an energy minimizer does not exist. This is reflected in the condition on $h$ in \eqref{condiH}.

\begin{proposition}[Tightness of the minimizing sequences]
\label{prop:tightness}
Let $M$ be a $\dm$-dimensional Cartan-Hadamard manifold whose sectional curvatures satisfy \eqref{eqn:K-lowerb}, for some $c_m>0$, and let $\p\in M$ be a fixed pole. Also, assume $0<\m<1$ and that $h$ is lower semi-continuous, non-decreasing, and satisfies \eqref{eqn:condH0} and \eqref{condiH}. Then, any minimizing sequence $\{\mu_n\}_{n \geq 1}\subset \mathcal{P}_{1, \p}(M)$ of the functional $E[\cdot]$ is tight.
\end{proposition}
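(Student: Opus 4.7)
The plan is to exhibit a weight $\Phi$ going to infinity such that $\sup_n \int_M \Phi(r_x)\,\d\mu_n(x)<\infty$; tightness then follows immediately by Markov's inequality applied to the sublevel sets $\{r_x\leq R\}$. The natural candidate here is
\[
\Phi(r_x):=\left(\frac{\sinh(\sqrt{c_m}\,r_x)}{\sqrt{c_m}}\right)^\lambda,
\]
with $\lambda$ as in the standing assumption \eqref{condiH}. Writing $I_n:=\int_M \Phi(r_x)\,\d\mu_n(x)$, the goal reduces to proving $\sup_n I_n<\infty$.

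To bound $I_n$ I would close a bootstrap loop based on three ingredients. First, from \eqref{condiH} there exist constants $c_0,\theta_0>0$ with $h(\theta)\geq c_0 e^{\sqrt{c_m}\lambda\theta}$ for $\theta\geq\theta_0$. Set
\[
\mathcal{H}(\theta):=c_0\max\bigl(e^{\sqrt{c_m}\lambda\theta}-e^{\sqrt{c_m}\lambda\theta_0},\,0\bigr),
\]
which is non-decreasing, convex, satisfies $\mathcal{H}(0)=0$ and $0\leq\mathcal{H}\leq h$ on $[0,\infty)$. Since $\mu_n\in\calPo(M)$, Proposition \ref{convineq} together with $\mathcal{H}\leq h$ gives
\[
\int_M \mathcal{H}(r_x)\,\d\mu_n(x) \leq \iint_{M\times M}\mathcal{H}(\dist(x,y))\,\d\mu_n\,\d\mu_n \leq \iint_{M\times M} h(\dist(x,y))\,\d\mu_n\,\d\mu_n.
\]
Using $\sinh(\sqrt{c_m}\theta)/\sqrt{c_m}\leq e^{\sqrt{c_m}\theta}/(2\sqrt{c_m})$ and comparing $\Phi$ with $\mathcal{H}$ on a bounded region (where $\Phi$ is trivially bounded) and at infinity (where $\mathcal{H}(\theta)\geq \tfrac12 c_0 e^{\sqrt{c_m}\lambda\theta}$), one obtains constants $A_1,A_2>0$ with $I_n \leq A_1 + A_2 \iint h(\dist(x,y))\,\d\mu_n\,\d\mu_n$.

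Second, since $\{\mu_n\}$ is minimizing, $E[\mu_n]$ is bounded above by some $\bar E$, and the sign of $\frac{1}{\m-1}$ yields
\[
\iint_{M\times M} h(\dist(x,y))\,\d\mu_n\,\d\mu_n \;\leq\; 2\bar E + \frac{2}{1-\m}\int_M \rho_n^\m\,\dx.
\]
Third, the generalized Carlson--Levin inequality (Lemma \ref{mainineq}) applied to the a.c.\ part $\rho_n$ of $\mu_n$, together with $\int_M\rho_n\,\dx\leq 1$, gives $\int_M \rho_n^\m\,\dx \leq C_1 I_n^{p\m}$, where the exponent $p\m = \frac{(\dm-1)(1-\m)}{\lambda}$ is strictly less than $1$ precisely because $\lambda>(\dm-1)(1-\m)/\m$ by \eqref{condiH}. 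Chaining the three estimates yields
\[
I_n \;\leq\; B_1 + B_2 I_n^{p\m},
\]
with constants $B_1,B_2$ independent of $n$. Since $p\m<1$, this forces $\sup_n I_n<\infty$, completing the proof modulo the Markov step.

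The main obstacle I anticipate is closing the bootstrap loop: none of the three inequalities alone is sufficient, and they couple the entropy, the interaction energy and the moment $I_n$ in a nontrivial way. What makes it close is the exact matching of exponents: the growth rate $\sqrt{c_m}\lambda$ allowed for $h$ in \eqref{condiH} is precisely the rate needed so that Lemma \ref{mainineq} produces an exponent $p\m<1$, mirroring the sharpness seen in Theorem \ref{thm:nonexist}. A subsidiary technical point is that $h$ itself is only lower semicontinuous and non-decreasing, hence not necessarily convex, which forces the introduction of the convex minorant $\mathcal{H}$ before invoking Proposition \ref{convineq}.
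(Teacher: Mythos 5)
Your argument is correct and follows essentially the same route as the paper: the generalized Carlson--Levin inequality (Lemma \ref{mainineq}) controls the entropy by the $\lambda$-moment of $\sinh(\sqrt{c_m}\,r_x)/\sqrt{c_m}$, the growth condition \eqref{condiH} combined with Proposition \ref{convineq} controls that moment by the interaction energy, and the exponent $p\m<1$ closes the loop (the paper packages your bootstrap inequality $I_n\le B_1+B_2I_n^{p\m}$ as the statement that $X\mapsto -\tfrac{C_1}{1-\m}X^{p\m}+\tfrac{\gamma_1}{4}X$ is bounded below), before concluding by Markov's inequality. The only cosmetic difference is that you minorize $h$ by a truncated exponential before invoking Proposition \ref{convineq}, whereas the paper minorizes $h$ by $\gamma_1\bigl(\sinh(\sqrt{c_m}\theta)/\sqrt{c_m}\bigr)^\lambda+\gamma_2$ and applies the proposition to the (already convex) sinh-power directly.
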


\begin{proof}
Consider a minimizing sequence $\{\mu_n\}_{n \geq 1}\subset \mathcal{P}_{1, \p}(M)$ of $E[\cdot]$, and for each $n \geq 1$, denote by $\rho_n$ the absolutely continuous part of $\mu_n$. Then,
\begin{equation}
\label{ineq:Gmun}
\begin{aligned}
E[\mu_n]&=-\frac{1}{1-\m}\int_M \rho_n(x)^\m\d x+\frac{1}{2}\iint_{M\times M} h(\dist(x, y))\d\mu_n(x)\d \mu_n(y)\\
&\geq -\frac{C_1}{1-\m}\left(\int_M \left(\frac{\sinh(\sqrt{c_m}r_x)}{\sqrt{c_m}}\right)^\lambda \rho_n(x)\d x\right)^{p\m} \\
& \quad +\frac{1}{4}\iint_{M\times M} h(\dist(x, y))\d\mu_n(x)\d \mu_n(y) +\frac{1}{4}\iint_{M\times M} h(\dist(x, y))\d\mu_n(x)\d \mu_n(y),
\end{aligned}
\end{equation}
where for the inequality we used Lemma \ref{mainineq} together with the fact that $\int_M \rho_n(x) \d x \leq \int_M \d \mu_n(x) =1$, with $\lambda$ being the constant in \eqref{condiH} and
\[
p=\frac{(\dm-1)(1-\m)}{\lambda \m} \in (0,1).
\]
Note that $C_1>0$ depends on $\lambda, \m, c_m$, and $\dm$, but not on $\mu_n$.

By \eqref{condiH} and 
\[
\lim_{\theta\to\infty}\frac{\exp(\sqrt{c_m}\lambda\theta)}{\left(\frac{\sinh(\sqrt{c_m}\theta)}{\sqrt{c_m}}\right)^{\lambda}}
=\left(\lim_{\theta\to\infty}
\frac{\sqrt{c_m}\exp(\sqrt{c_m}\theta)}{\sinh(\sqrt{c_m}\theta)}
\right)^\lambda
=\lim_{\theta\to\infty}\left(
\frac{2\sqrt{c_m}}{1-\exp(-2\sqrt{c_m}\theta)}
\right)^\lambda=2^\lambda c_m^{\lambda/2},
\]
we also have
\begin{equation}
\label{eqn:hosinh}
\liminf_{\theta\to\infty}\frac{h(\theta)}{\left(\frac{\sinh(\sqrt{c_m}\theta)}{\sqrt{c_m}}\right)^\lambda}>0.
\end{equation}
Therefore, there exist some constants $\gamma_1>0$ and $\gamma_2$ (which only depend on $h$, $\lambda$ and $c_m$) such that
\begin{align}\label{h-sinhest}
h(\theta)\geq \gamma_1\left(\frac{\sinh(\sqrt{c_m}\theta)}{\sqrt{c_m}}\right)^{\lambda}+\gamma_2, \qquad\forall \theta>0.
\end{align}
Indeed, we can set $\gamma_1$ to be a positive constant less than $2^\lambda c_m^{\lambda/2}$ and 
\[
\gamma_2:=\inf_{\theta\geq0}\left(h(\theta)-\gamma_1\left(\frac{\sinh(\sqrt{c_m}\theta)}{\sqrt{c_m}}\right)^\lambda\right).
\]

We substitute \eqref{h-sinhest} into \eqref{ineq:Gmun} to get
\begin{align*}
E[\mu_n]&\geq -\frac{C_1}{1-\m}\left(\int_M \left(\frac{\sinh(\sqrt{c_m}r_x)}{\sqrt{c_m}}\right)^\lambda \rho_n(x)\d x\right)^{p\m} \\
& \quad +\frac{\gamma_1}{4}\iint_{M\times M} \left(\frac{\sinh(\sqrt{c_m}\dist(x, y))}{\sqrt{c_m}}\right)^\lambda\d\mu_n(x)\d \mu_n(y) +\frac{\gamma_2}{4} +\frac{1}{4}\iint_{M\times M} h(\dist(x, y))\d\mu_n(x)\d \mu_n(y).
\end{align*}
Then, using Proposition \ref{convineq} with the convex function 
\begin{equation}
\label{eqn:H-sinh}
\mathcal{H}(\theta)=\left(\frac{\sinh\sqrt{c_m}\theta}{\sqrt{c_m}}\right)^\lambda,
\end{equation}
we further find
\begin{equation}
\label{ineqGmu1}
\begin{aligned}
E[\mu_n]&\geq -\frac{C_1}{1-\m}\left(\int_M \left(\frac{\sinh(\sqrt{c_m}r_x)}{\sqrt{c_m}}\right)^\lambda \rho_n(x)\d x\right)^{p \m} \\
& \quad +\frac{\gamma_1}{4}\int_{M} \left(\frac{\sinh(\sqrt{c_m}r_x)}{\sqrt{c_m}}\right)^\lambda\d\mu_n(x) +\frac{\gamma_2}{4}
+\frac{1}{4}\iint_{M\times M} h(\dist(x, y))\d\mu_n(x)\d \mu_n(y)\\
&\geq -\frac{C_1}{1-\m}\left(\int_M \left(\frac{\sinh(\sqrt{c_m}r_x)}{\sqrt{c_m}}\right)^\lambda \rho_n(x)\d x\right)^{p \m} \\
& \quad +\frac{\gamma_1}{4}\int_{M} \left(\frac{\sinh(\sqrt{c_m}r_x)}{\sqrt{c_m}}\right)^\lambda\rho_n(x)\d x +\frac{\gamma_2}{4} +\frac{1}{4}\iint_{M\times M} h(\dist(x, y))\d\mu_n(x)\d \mu_n(y),
\end{aligned}
\end{equation}
where for the second inequality we used that $\rho_n$ is the absolutely continuous part of $\mu_n$.

Define $f:[0, \infty)\to \bbr$ by
\[
f(X)=-\frac{C_1}{1-\m}X^{p \m}+\frac{\gamma_1}{4}X. 
\]
As $0<p \m<1$ and $C_1,\gamma_1>0$, $f$ has a global minimum on $[0,\infty)$, and hence,
\[
f(X)\geq C_2, \qquad\forall X\in[0, \infty),
\]
for some $C_2 \in \bbr$. Using this property in \eqref{ineqGmu1} for 
\[
X=\int_{M} \left(\frac{\sinh(\sqrt{c_m}r_x)}{\sqrt{c_m}}\right)^{\lambda}\rho_n(x)\d x,
\]
we find
\begin{equation}
\label{eqn:E-lb-h}
E[\mu_n]\geq C_2+\frac{\gamma_2}{4}+\frac{1}{4}\iint_{M\times M}h(\dist(x, y))\d\mu_n(x)\d \mu_n(y).
\end{equation}
We point out that the constants in \eqref{eqn:E-lb-h} depend on $h,\lambda, \m, c_m$, and $\dm$, but not on $\mu_n$.

Since $\mu_n$ is a minimizing sequence (and hence, $E[\mu_1] \geq E[\mu_n]$),  we infer from \eqref{eqn:E-lb-h} that
\[
\iint_{M\times M}h(\dist(x, y))\d\mu_n(x)\d \mu_n(y)
\]
is uniformly bounded above in $n\in\mathbb{N}$. From \eqref{h-sinhest}, we get that
\[
\iint_{M\times M}\left(\frac{\sinh(\sqrt{c_m}\dist(x,y))}{\sqrt{c_m}}\right)^\lambda\d\mu_n(x) \d \mu_n(y)
\]
is also uniformly bounded in $n\in\mathbb{N}$; denote by $C_3$ a uniform upper bound for it.  From the convexity argument in Proposition \ref{convineq} (see \eqref{eqn:convineq} and \eqref{eqn:H-sinh}), we then find
\begin{equation}
\label{eqn:C3ineq}
\begin{aligned}
C_3 &\geq \iint_{M\times M}\left(\frac{\sinh(\sqrt{c_m}\dist(x,y))}{\sqrt{c_m}}\right)^\lambda\d\mu_n(x) \d \mu_n(y) \\
& \geq \int_{M} \left(\frac{\sinh(\sqrt{c_m}r_x)}{\sqrt{c_m}}\right)^{\lambda}\d \mu_n(x),\qquad \forall n\in\mathbb{N}.
\end{aligned}
\end{equation}

Fix $R>0$. From \eqref{eqn:C3ineq}, we further estimate
\begin{align*}
C_3
&\geq\int_{r_x\geq R} \left(\frac{\sinh(\sqrt{c_m}r_x)}{\sqrt{c_m}}\right)^{\lambda}\d\mu_n(x)\\
&\geq \int_{r_x\geq R} \left(\frac{\sinh(\sqrt{c_m}R)}{\sqrt{c_m}}\right)^{\lambda}\d\mu_n(x)\\
&= \left(\frac{\sinh(\sqrt{c_m}R)}{\sqrt{c_m}}\right)^{\lambda}\int_{r_x\geq R}\d\mu_n(x), \qquad \forall n\in\mathbb{N}.
\end{align*}
This implies
\begin{align}\label{C3inq2}
\int_{r_x\geq R}\d \mu_n(x)\leq \frac{C_3}{\left(\frac{\sinh(\sqrt{c_m}R)}{\sqrt{c_m}}\right)^{\lambda}}, \qquad \forall n\in\mathbb{N},
\end{align}
where the constant $C_3$ does not depend on $R$ or $n$. We infer from here that $\{\mu_n\}_{n \geq 1}$ is tight. 
\end{proof}

\begin{remark}
\label{rmk:E-lb}
As an immediate consequence of the proof of Proposition \ref{prop:tightness}, for any minimizing sequence $\{\mu_n\}_{n \geq 1}\subset \mathcal{P}_{1, \p}(M)$ of the energy, it holds that
\begin{equation}
\label{eqn:E-lb}
E[\mu_n] \geq \widetilde{C}_1 + \widetilde{C}_2 \, \calW_1(\mu_n, \delta_\p), \qquad \text{ for all } n \geq 1,
\end{equation}
with constants $\widetilde{C}_1$ and $\widetilde{C}_2$ that are independent of $\mu_n$. Indeed, by \eqref{eqn:E-lb-h} and \eqref{h-sinhest}, we get 
\begin{align*}
E[\mu_n]& \geq C_2+\frac{\gamma_2}{4}+\frac{1}{4}\iint_{M\times M}  \left( \gamma_1\left(\frac{\sinh(\sqrt{c_m}\theta)}{\sqrt{c_m}}\right)^{\lambda}+\gamma_2 \right) \d\mu_n(x)\d \mu_n(y) \\
&\geq C_2 +  \frac{\gamma_2}{2} + \frac{\gamma_1}{4} \int_{M} \left(\frac{\sinh(\sqrt{c_m}r_x)}{\sqrt{c_m}}\right)^{\lambda}\d \mu_n(x),
\end{align*}
where for the second inequality we used Proposition \ref{convineq} (see also \eqref{eqn:C3ineq}). The conclusion then follows by noting that there exist constants $\widetilde{\gamma_1}$ and $\widetilde{\gamma_2}$ (that depend on $c_m$ and $\lambda$) such that
\[
 \left(\frac{\sinh(\sqrt{c_m} \theta)}{\sqrt{c_m}}\right)^{\lambda} \geq \widetilde{\gamma_1} \theta + \widetilde{\gamma_2}, \qquad \text{ for all } \theta>0,
\]
and that
\[
\int_M r_x \, \d \mu_n(x)  = \calW_1(\mu_n,\delta_\p).
\]
\end{remark}

\begin{proposition}[Lower semi-continuity of the energy]\label{lscEnergy}
Let $M$ and $h$ satisfy the assumptions of Proposition \ref{prop:tightness}, and $\p$ be a fixed pole in $M$. Then, for $0<\m<1$, $E[\cdot]$ is lower semi-continuous on minimizing sequences, i.e., for any minimizing sequence $\{\mu_n\}_{n \geq 1}\subset\mathcal{P}_{1, \p}(M)$ of $E$ that converges weakly to $\mu_0$, we have
\[
\liminf_{n\to\infty}E[\mu_n]\geq E[\mu_0].
\]
\end{proposition}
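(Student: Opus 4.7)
The plan is to pass to a narrowly convergent subsequence and establish lower semi-continuity of the entropy and interaction components of $E$ separately. By Proposition \ref{prop:tightness}, the minimizing sequence $\{\mu_n\}$ is tight, so Prokhorov's theorem yields a subsequence (not relabeled) converging narrowly to some $\mu_0 \in \calP(M)$. Passing to a further subsequence so that $\lim_{n\to\infty} E[\mu_n]$ exists, the claim reduces to $\lim_n E[\mu_n] \geq E[\mu_0]$, which I handle by analyzing the two summands of $E$ separately.

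For the interaction term, I will use that $h$ non-decreasing together with $h(0)=0$ forces $h \geq 0$ on $[0,\infty)$. Combined with the lower semi-continuity of $h$ and the continuity of $\dist$, the map $(x,y)\mapsto h(\dist(x,y))$ is a non-negative LSC function on $M \times M$. Narrow convergence of $\mu_n$ to $\mu_0$ implies $\mu_n \otimes \mu_n \rightharpoonup \mu_0 \otimes \mu_0$ narrowly on $M\times M$, and the Portmanteau theorem for LSC integrands bounded below then gives
\[
\liminf_{n \to \infty} \iint_{M \times M} h(\dist(x,y)) \, \d\mu_n(x)\d\mu_n(y) \geq \iint_{M \times M} h(\dist(x,y)) \, \d\mu_0(x)\d\mu_0(y).
\]

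For the entropy term, I consider the integrand $g(s) = s^\m/(\m-1)$ on $[0,\infty)$. Since $g''(s) = \m\, s^{\m-2} > 0$, the function $g$ is convex with $g(0)=0$, and admits the linear lower bound $g(s) \geq -(1+s)/(1-\m)$ (using $s^\m \leq 1 + s$). Because $0 < \m < 1$, the recession function vanishes:
\[
g^\infty(1) = \lim_{t\to\infty} g(t)/t = \lim_{t\to\infty} t^{\m-1}/(\m-1) = 0.
\]
By the classical lower semi-continuity theorem for convex integrands along narrow convergence of measures (of Reshetnyak / Ioffe / Bouchitt\'{e}--Buttazzo type), the functional $\mu \mapsto \int_M g(\rho)\dV$, with $\rho$ the absolutely continuous part of $\mu$, is lower semi-continuous along $\mu_n \rightharpoonup \mu_0$. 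Adding this to the interaction bound above yields $\liminf_n E[\mu_n] \geq E[\mu_0]$.

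The main obstacle lies in the entropy term: since $s \mapsto s^\m$ is concave for $0<\m<1$, mass in $\mu_n$ that concentrates into a singular limit could, a priori, alter $\int \rho_n^\m \dV$ in subtle ways that are invisible to narrow convergence. The crucial ingredient is the vanishing of the recession function $g^\infty$, which encodes that the entropy per unit mass $s^{\m-1}/(\m-1)$ tends to $0$ as $s\to\infty$; consequently, any mass escaping to a singular part of $\mu_0$ contributes nothing to the lower-semi-continuity bound. This is precisely the feature that makes the entropy functional LSC under narrow convergence in the fast diffusion regime, in contrast with the linear and slow diffusion cases where the sign and size of $g^\infty$ lead to qualitatively different behaviours.
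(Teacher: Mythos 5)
Your treatment of the interaction term is correct, and in fact slightly more direct than the paper's (which pushes forward to $T_\p M$ by $\log_\p$ before invoking lower semi-continuity on $\bbr^\dm$): since $h(0)=0$ and $h$ is non-decreasing, $h\geq 0$, the map $(x,y)\mapsto h(\dist(x,y))$ is a non-negative l.s.c.\ integrand, and the Portmanteau argument applies.

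The entropy part, however, has a genuine gap. The ``classical'' lower semi-continuity theorem you invoke (Reshetnyak/Ioffe/Bouchitt\'e--Buttazzo, or \cite[Proposition 7.7]{Santambrogio2015}, which the paper uses) is a statement on a \emph{compact} domain, or for integrands bounded below by an affine function of the \emph{measure} in a way compatible with narrow convergence. Here $g(s)=s^\m/(\m-1)$ is convex but \emph{negative}, and on a non-compact manifold the functional $\mu\mapsto\int_M g(\rho)\,\dV$ is \emph{not} lower semi-continuous under narrow convergence, not even along tight sequences. Concretely, take $\mu_n=(1-\epsilon_n)\nu+\epsilon_n V_n^{-1}\mathbbm{1}_{A_n}\dV$ with $\epsilon_n\to0$, $|A_n|=V_n$, and $A_n$ escaping to infinity: the family is tight and converges narrowly to $\nu$, yet $\int\rho_n^\m\,\dV\geq\epsilon_n^\m V_n^{1-\m}\to\infty$ if $V_n$ grows fast enough, so the entropy drops to $-\infty$ along the sequence while remaining finite at the limit. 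The vanishing recession function $g^\infty(1)=0$ only controls \emph{concentration} (mass entering the singular part of $\mu_0$); the failure mode for a negative concave-power entropy is \emph{spreading at infinity}, which your argument never addresses — note that your proof uses nothing about $\{\mu_n\}$ beyond tightness, whereas the proposition is deliberately stated only for minimizing sequences.

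The paper closes exactly this gap: it applies the compact-domain l.s.c.\ result on $\overline{B_r(\p)}$, and then controls the tail $\int_{B_r(\p)^c}\rho_n^\m\,\dV$ \emph{uniformly in $n$} by applying the Carlson--Levin inequality (Lemma \ref{mainineq}) to $\rho_n\cdot\ind_{B_r(\p)^c}$, combined with the uniform bounds \eqref{eqn:C3ineq} and \eqref{C3inq2} on the moment $\int(\sinh(\sqrt{c_m}r_x)/\sqrt{c_m})^\lambda\d\mu_n$ and on the tail mass, which are available precisely because $\{\mu_n\}$ is a minimizing sequence. This yields a tail entropy bound of order $(\sinh(\sqrt{c_m}r))^{-\lambda(1-p)\m}\to0$ as $r\to\infty$, after which one passes to the limit $r\to\infty$. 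Some such quantitative tail estimate is indispensable; without it the entropy step of your argument fails.
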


\begin{proof}
We investigate the lower semi-continuity of the entropy part and the interaction energy part, separately. To do so, let  $\{\mu_n\}_{n \geq 1}\subset \mathcal{P}_{1, \p}(M)$ be a minimizing sequence of $E[\cdot]$ which converges weakly to some measure $\mu_0 \in \calP(M)$.  Denote by $\rho_n$ and $\rho_0$ the absolutely continuous parts of $\mu_n$ and $\mu_0$, respectively. 
\smallskip

\noindent \underline{Part 1.} ({\em Lower semi-continuity of the entropy}) Fix a radius $r>0$ and consider the Riemannian volume measure $\dV$ on $\overline{B_r(\p)}$. By \cite[Proposition 7.7]{Santambrogio2015}, given a convex lower semi-continuous function $U:\bbr_+\to\bbr$, the functional 
\[
\mathcal{F}(\mu)=\int_{\overline{B_r(\p)}} U(\rho(x))\dV (x)+L\mu^s\bigl(\overline{B_r(\p)} \bigr)
\]
is lower semi-continuous with respect to weak convergence. Here, $\mu=\rho\, \dV +\mu^s$ is decomposed into its absolutely continuous and singular parts, and $L:=\lim_{t\to\infty}f(t)/t\in\bbr\cup\{+\infty\}$.

Apply this result for $f(x)=\frac{x^\m}{\m-1}$, (note that since $0<\m<1$, $f$ is a convex lower semi-continuous function that satisfies $L=\lim_{t\to\infty}f(t)/t=0$), to conclude that
\[
\frac{1}{\m-1} \int_{\overline{B_r(\p)}}\rho(x)^\m \dx \qquad \text{ is lower semi-continuous}.
\]
Hence,
\begin{align}\label{entint}
\liminf_{n\to\infty}\left(\frac{1}{\m-1} \int_{\overline{B_r(\p)}}\rho_n(x)^\m \, \dx\right)\geq \frac{1}{\m-1}\int_{\overline{B_r(\p)}}\rho_0(x)^\m \, \dx.
\end{align}

Now, apply Lemma \ref{mainineq} for $\rho_n \cdot \ind_{_{B_r(\p)^c}}$ ($\rho_n$ restricted to the complement of the ball $B_r(\p)$), to get
\[
\int_{B_r(\p)^c}\rho_n(x)^\m \, \dx\leq C_1\left(\int_{B_r(\p)^c}\rho_n(x) \, \dx \right)^{(1-p)\m}\left(\int_{B_r(\p)^c}\left(\frac{\sinh(\sqrt{c_m}r_x)}{\sqrt{c_m}}\right)^{\lambda}\rho_n(x) \, \dx \right)^{p \m}.
\]
The constant $C_1>0$ depends on $\lambda, \m, c_m$, and $\dm$, but not on $n$ or $r$. Furthermore, from the above we have
\begin{align}
\begin{aligned}\label{entext}
\int_{B_r(\p)^c}\rho_n(x)^\m \dx&\leq 
C_1\left(\int_{B_r(\p)^c}\d\mu_n(x)\right)^{(1-p) \m}\left(\int_M\left(\frac{\sinh(\sqrt{c_m}r_x)}{\sqrt{c_m}}\right)^{\lambda}\d\mu_n(x)\right)^{p \m}\\
&\leq C_1\left(\frac{C_3}{\left(\frac{\sinh(\sqrt{c_m}r)}{\sqrt{c_m}}\right)^\lambda}\right)^{(1-p) \m}C_3^{p \m} \\ & =C_1C_3^\m\left(\frac{\sqrt{c_m}}{\sinh(\sqrt{c_m}r)}\right)^{\lambda(1-p) \m},
\end{aligned}
\end{align}
where $C_3$ is the constant (independent of $n$) introduced in the proof of Proposition \ref{prop:tightness} -- see \eqref{eqn:C3ineq}. In \eqref{entext}, for the first inequality we used that $\rho_n$ is the absolutely continuous part of $\mu_n$ and that $B_r(\p)^c\subset M$, and for the second inequality we used \eqref{C3inq2} and \eqref{eqn:C3ineq} for each of the integrals, respectively.

From \eqref{entint} and \eqref{entext}, we then have
\begin{align*}
\frac{1}{\m-1}\int_M\rho_0(x)^\m \, \dx & \leq \frac{1}{\m-1}\int_{\overline{B_r(\p)}}\rho_0(x)^\m \, \dx\\
& \leq \liminf_{n\to\infty}\left(\frac{1}{\m-1}\int_{\overline{B_r(\p)}}\rho_n(x)^\m \,\dx\right)\\
& \leq \liminf_{n\to\infty}\left( \frac{1}{\m-1} \int_{\overline{B_r(\p)}}\rho_n(x)^\m \, \dx \right. \\
& \quad \left. + \frac{1}{1-\m} C_1C_3^\m\left(\frac{\sqrt{c_m}}{\sinh(\sqrt{c_m}r)}\right)^{\lambda(1-p)\m}- \frac{1}{1-\m} \int_{B_r(\p)^c}\rho_n(x)^\m \dx  \right)\\
&= \liminf_{n\to\infty}\left(\frac{1}{\m-1} \int_M\rho_n(x)^\m\ dx\right)+ \frac{1}{1-\m} C_1C_3^\m \left(\frac{\sqrt{c_m}}{\sinh(\sqrt{c_m}r)}\right)^{\lambda(1-p)\m}.
\end{align*}
Since $r>0$ was arbitrary, one can send $r\to\infty$ in the above, to find
\begin{equation}
\label{eqn:lsc-entropy}
\liminf_{n\to\infty}\left(\frac{1}{\m-1}\int_M\rho_n(x)^\m\d x\right) \geq \frac{1}{\m-1} \int_M\rho_0(x)^\m\d x.
\end{equation}
\smallskip

\noindent \underline{Part 2.} ({\em Lower semi-continuity of the interaction energy}) The lower semi-continuity of the interaction component can be shown as in \cite{FePa2024b}, by using the Riemannian logarithm $\log_\p$ to map $M$ onto the tangent space $T_\p M$, and then use properties of the interaction energy on Euclidean spaces. Indeed, denote by $f:M\to T_\p M$ the Riemannian logarithm map at $\p$, i.e.,
\begin{equation}
\label{eqn:Rlog}
f(x)=\log_\p x,\qquad \text{ for all }x\in M.
\end{equation}
The inverse $f^{-1}:T_\p M\to M$ is the Riemannian exponential map $\exp_\p$,  which on Cartan-Hadamard manifolds is a global diffeomorphism.

By a change of variable $x=\exp_\p u$, $y=\exp_\p v$, we can write the interaction energy of a probability measure $\mu$ as 
\begin{align*}
 \iint_{M\times M}h(\dist(x, y)) \d \mu(x) \d \mu(y) =\iint_{T_\p M\times T_\p M}h(\dist(\exp_\p u,\exp_\p v )) \d f_\#\mu(u) \d f_\#\mu(v).
\end{align*}
The right-hand-side above can be interpreted as the interaction energy of $f_\# \mu$ on $T_\p M\simeq \mathbb{R}^\dm$ with the interaction potential $\tilde{h} : T_\p M \times T_\p M \to \mathbb{R}^\dm$ given by
\[
\tilde{h}(u, v)=h(\dist(\exp_\p u,\exp_\p v )).
\]
Note that $\tilde{h}$ is a lower semi-continuous function, as $h$ is lower semi-continuous and $\exp_\p$ is a diffeomorphism.

Since $\mu_n \rightharpoonup \mu_0$ weakly as $n \to \infty$, it is straightforward to show by a change of variable that we also have $f_\# \mu_{n} \rightharpoonup f_\# \mu_0$ weakly as $n \to \infty$. The interaction energy on $\bbr^\dm$ is lower semi-continuous with respect to weak convergence, provided the interaction potential is lower semi-continuous and bounded from below \cite[Proposition 7.2]{Santambrogio2015}. Putting together these facts, we then find
\begin{equation}
\label{eqn:lsc-interaction}
\begin{aligned}
\liminf_{n \to \infty }\iint_{M\times M}h(\dist(x, y)) \d \mu_n(x) \d \mu_n(y) &= \liminf_{n \to \infty} \iint_{T_\p M\times T_\p M}\tilde{h}(u,v) \d f_\#\mu_n(u) \d f_\#\mu_n(v) \\[2pt]
& \geq \iint_{T_\p M\times T_\p M}\tilde{h}(u,v) \d f_\#\mu_0(u) \d f_\#\mu_0(v) \\[2pt]
&= \iint_{M\times M}h(\dist(x, y))\d \mu_0(x) \d \mu_0(y).
\end{aligned}
\end{equation}

Finally, the lower semi-continuity of the energy $E[\cdot]$ now follows from \eqref{eqn:lsc-entropy} and \eqref{eqn:lsc-interaction}.
\end{proof}

In the following result we will use the notation $B_R(\delta_\p)$ for the geodesic ball in the space $(\calP_1(M),\calW_1)$, of radius $R$ and centre at $\delta_\p$. In the paper we have used a similar notation for the open ball $B_r(\p)$ of radius $r$ and centre at $\p$, in the geodesic space $(M,\dist)$. Nevertheless, the different spaces in which these geodesic balls lie in, will be clear from the context. 

\begin{lemma}[Conservation of centre of mass \cite{FePa2024b}]
\label{lemma:CM}
Let $M$ be a Cartan-Hadamard manifold, $\p \in M$ a fixed pole, $R>0$ a fixed radius, and $\{\mu_n\}_{n\geq 1}$ a sequence in $\overline{B_R(\delta_\p)}$ such that all $\mu_n$ ($n\geq 1$) have centre at mass at $\p$ and $\mu_n$ converges weakly to $\mu_0$ as $n \to \infty$. Also assume that
\[
\int_M \phi(r_x) \, \d \mu_n(x)
\]
is uniformly bounded from above, where $\phi$ is a function with superlinear growth at infinity, i.e., $\lim_{\theta \to \infty} \frac{\phi(\theta)}{\theta} = \infty$. Then, $\mu_0 \in \calP_{1,\p}(M)$ (in particular, $\mu_0$ has centre of mass at $\p$).
\end{lemma}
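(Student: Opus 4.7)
The plan is to combine weak convergence of $\mu_n$ to $\mu_0$ with a uniform integrability argument supplied by the superlinear growth of $\phi$. The target identity $\int_M \log_\p x \, \d\mu_0(x) = 0$ will be obtained by passing to the limit in $\int_M \log_\p x \, \d\mu_n(x) = 0$, which requires both that $\log_\p x$ be $\mu_0$-integrable (so the centre of mass of $\mu_0$ is well defined) and that narrow convergence be upgraded to convergence of the integral of the unbounded vector field $\log_\p$.

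The first step would be to extract uniform integrability from the hypothesis. Let $K$ denote the uniform upper bound on $\int_M \phi(r_x) \d \mu_n(x)$. Since $\phi(\theta)/\theta \to \infty$ at infinity, for any $\varepsilon > 0$ one can pick $T_\varepsilon$ large enough that $\theta \leq (\varepsilon/K)\phi(\theta)$ for $\theta \geq T_\varepsilon$, which yields $\int_{\{r_x > T_\varepsilon\}} r_x \, \d\mu_n(x) \leq \varepsilon$ for every $n$. Applying the Portmanteau theorem (in its form for non-negative lower semi-continuous integrands) to the function $x \mapsto \phi(r_x)$ gives $\int_M \phi(r_x) \d\mu_0 \leq K$, and the same superlinear comparison then forces $\int_M r_x \d\mu_0 < \infty$, so $\mu_0 \in \calP_1(M)$.

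The final step is a truncation argument. Introduce a continuous cutoff $\chi_T : [0,\infty) \to [0,1]$ with $\chi_T \equiv 1$ on $[0,T]$ and $\chi_T \equiv 0$ outside $[0,T+1]$. Then $x \mapsto \chi_T(r_x) \log_\p x$ is continuous and bounded, so weak convergence of $\mu_n$ to $\mu_0$ gives
\[
\int_M \chi_T(r_x) \log_\p x \, \d \mu_n(x) \longrightarrow \int_M \chi_T(r_x) \log_\p x \, \d \mu_0(x) \qquad \text{as } n\to\infty.
\]
Combining with $\int_M \log_\p x \d\mu_n = 0$ and the pointwise bound $|(1-\chi_T(r_x)) \log_\p x| \leq r_x \ind_{\{r_x > T\}}$, the uniform tail estimate from the first step bounds the left-hand limit in norm by $\sup_n \int_{\{r_x > T\}} r_x \d\mu_n$, which tends to $0$ as $T \to \infty$. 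On the other side, $\mu_0 \in \calP_1(M)$ lets dominated convergence send $\int_M \chi_T(r_x) \log_\p x \, \d\mu_0 \to \int_M \log_\p x \, \d\mu_0$ as $T \to \infty$. The two limits together force $\int_M \log_\p x \, \d \mu_0 = 0$, which is the desired conclusion.

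The main obstacle is exactly the unboundedness of $\log_\p$: neither narrow convergence alone nor the uniform bound $\calW_1(\mu_n, \delta_\p) \leq R$ prevents escape of first-moment mass to infinity, as illustrated by $\mu_n = (1-\tfrac{1}{n})\delta_\p + \tfrac{1}{n}\delta_{x_n}$ with $r_{x_n} = nR$, whose weak limit is $\delta_\p$ while $\int r_x \d\mu_n = R$ for all $n$. The superlinear-moment hypothesis rules out precisely this kind of concentration at infinity, so I expect that establishing the uniform-in-$n$ tail estimate on $r_x$ is the one technical point that genuinely uses the full strength of the assumption; the rest is standard weak-convergence bookkeeping.
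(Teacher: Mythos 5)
Your proof is correct and follows essentially the same route as the paper's: a uniform tail estimate on $\int_{\{r_x>T\}} r_x\,\d\mu_n(x)$ extracted from the superlinear moment bound, combined with a truncation of $\log_\p$ and passage to the limit by weak convergence on the truncated, bounded continuous part. The only difference is cosmetic — you use a continuous cutoff $\chi_T$ where the paper integrates over the sharp set $\{r_x\le r\}$ — and your version is in fact slightly cleaner, since a sharp indicator is not a continuous test function and would strictly speaking require $\mu_0(\partial B_r(\p))=0$ to pass to the limit.
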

\begin{proof}
A variant of this result was stated and proved in \cite[Lemma 5.5]{FePa2024b}. For completeness, we include a proof here as well.

First note that $\mu_n$ has uniformly bounded $1$-moments, as
\[
\int_M \dist(x, \p) \, \d \mu_n(x)  = \calW_1(\mu_n,\delta_\p) \leq R.
\]
Since convergence in $\calW_1$ is equivalent to weak convergence and uniform $1$-integrability \cite{AGS2005}, we infer that $\mu_n$ also converges to $\mu_0$ in $(\calP_1(M),\calW_1)$. Hence, $\mu_0 \in \calP_1(M)$, and more specifically, $\mu_0 \in \overline{B_R(\delta_\p)}$. It remains to show that $\rho_0$ has centre of mass at $\p$.

Fix an arbitrary unit tangent vector $v\in T_\p M$. As $\mu_0\in\overline{B_R(\delta_\p)}$, we have
\[
\left| \int_{M}  \log_\p x\cdot v \, \d \mu_0(x) \right|  \leq \|v\| \int_{M} r_x \, \d \mu_0(x) \leq R.
\]
Fix $\epsilon>0$ arbitrary small. Since the integral $\displaystyle \int_{M}  \log_\p x\cdot v \, \d \mu_0(x) $ is convergent,  there exists $r_1>0$ such that
\begin{equation}
\label{eqn:iint-rho0-log}
\left |\int_{r_x> r}\log_\p x\cdot v \, \d \mu_0(x) \right| < \epsilon, \qquad \text{ for any } r>r_1.
\end{equation}

On the other hand, since $\phi$ grows superlinearly at infinity, there exists $r_2>0$ such that
\[
0< \frac{\theta}{\phi(\theta)} <\epsilon, \qquad \text{ for any } r>r_2. 
\]
Then, for any $r>r_2$ and $n \geq 1$, we have
\begin{equation}
\label{eqn:iint-rhok-log}
\begin{aligned}
\left|\int_{r_x> r} \log_\p x\cdot v \, \d \mu_n(x) \right|&\leq \int_{r_x> r }r_x \, \d \mu_n(x) \\
& = \int_{r_x> r} \phi(r_x)\left(\frac{r_x}{\phi(r_x)}\right) \d  \mu_n(x) \\[2pt]
&\leq \left\|\frac{\theta}{\phi(\theta)}\right\|_{L^\infty((r, \infty))}\times \int_M  \phi(r_x)\d \mu_n(x) \\[2pt]
& \leq U \epsilon,
\end{aligned}
\end{equation}
where $U$ denotes a uniform upper bound of $\int_M \phi (r_x) \d \mu_n(x)$. 

By combining \eqref{eqn:iint-rho0-log} and \eqref{eqn:iint-rhok-log}, for any $r>\max(r_1, r_2)$, we get
\begin{equation}
\label{eqn:iint-CMdiff}
\begin{aligned}
\left|\int_{M} \log_\p x \cdot v \, \d \mu_n(x)   -\int_{M} \log_\p x\cdot v \, \d \mu_0(x) \right| & \leq \left|\int_{r_x\leq r } \log_\p x\cdot v\, \d \mu_n(x) -\int_{r_x\leq r } \log_\p x\cdot v \, \d \mu_0(x) \right|\\[2pt]
& \quad +\left|\int_{r_x> r} \log_\p x\cdot v\, \d \mu_n(x)-\int_{r_x> r }\log_\p x\cdot v \, \d \mu_0(x) \right|\\[2pt]
& \leq \left|\int_{r_x\leq r }\log_\p x\cdot v \, \d \mu_n(x)-\int_{r_x\leq r } \log_\p x\cdot v \, \d \mu_0(x) \right| \\[3pt]
&\quad +(U+1)\epsilon.
\end{aligned}
\end{equation}
Since $\{x:r_x\leq r\}$ is a bounded set, by continuity of the Riemannian logarithm we have
\[
\lim_{n\to \infty} \int_{r_x\leq r } \log_\p x\cdot v \, \d \mu_n(x) = \int_{r_x\leq r } \log_\p x\cdot v \, \d \mu_0(x).
\]

Finally, letting $n \to\infty$ in \eqref{eqn:iint-CMdiff}, we find
\[
\limsup_{n \to\infty}\left|\int_{M} \log_\p x\cdot v \, \d \mu_n(x)-\int_{M}\log_\p x\cdot v \, \d \mu_0(x) \right|\leq (U+1)\epsilon,
\]
for any $\epsilon>0$. From here we infer that
\[
\lim_{n \to\infty}\left|\int_{M} \log_\p x\cdot v \, \d \mu_n(x) -\int_{M} \log_\p x\cdot v \, \d \mu_0(x) \right|=0,
\]
and hence,
\[
\lim_{n \to\infty}\int_{M} \log_\p x\cdot v \, \d \mu_n(x)=\int_{M}\log_\p x\cdot v \, \d \mu_0(x).
\]

Since $\mu_n$ has centre at mass at $\p$, we have $\int_M \log_\p x \cdot v \, \d \mu_n(x)=0$ for all $n \geq 1$, and consequently, 
\[
\int_M \log_\p x \cdot v \, \d \mu_0(x) =0.
\]
Since the constant unit vector $v$ is arbitrary, we infer that $\mu_0$ has centre of mass at $\p$.
\end{proof}

The main result of this section is the following theorem.
\begin{theorem}[Existence of a global minimizer]
\label{thm:existence}
Let $M$ be a $\dm$-dimensional Cartan-Hadamard manifold with sectional curvatures that satisfy \eqref{eqn:K-lowerb}, for some $c_m>0$, and let $\p\in M$ be a fixed pole. Also, let $0<\m<1$ and assume that $h$ is lower semi-continuous, non-decreasing, and satisfies \eqref{eqn:condH0} and \eqref{condiH}. Then, the energy functional $E[\cdot]$ given by \eqref{eqn:energy} has a global minimizer in $\calP_{1,\p}(M)$.
\end{theorem}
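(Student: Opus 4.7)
The plan is to apply the direct method of the calculus of variations, gluing together the three heavy ingredients already established in this section: tightness (Proposition \ref{prop:tightness}), lower semi-continuity on minimizing sequences (Proposition \ref{lscEnergy}), and preservation of the centre-of-mass constraint under weak convergence (Lemma \ref{lemma:CM}).

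First I would check that $\inf_{\calPo(M)} E \in \bbr$. For the upper bound, any smooth density compactly supported near $\p$ with centre of mass at $\p$ yields a finite entropy and a finite (in fact bounded) interaction term. For the lower bound, the exact chain of estimates used to prove Proposition \ref{prop:tightness} --- namely the Carlson--Levin inequality of Lemma \ref{mainineq}, the convexity inequality of Proposition \ref{convineq}, and the exponential lower bound \eqref{h-sinhest} on $h$ --- gives $E[\mu]\geq \widetilde{C}_1 + \widetilde{C}_2\,\calW_1(\mu,\delta_\p) \geq \widetilde{C}_1$ for every $\mu\in \calPo(M)$ of finite energy, as recorded in Remark \ref{rmk:E-lb}.

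Next I would take a minimizing sequence $\{\mu_n\}_{n\geq 1}\subset \calPo(M)$. Because $E[\mu_n]$ is uniformly bounded, Remark \ref{rmk:E-lb} forces $\calW_1(\mu_n,\delta_\p)\leq R$ for some $R>0$, so $\{\mu_n\}\subset \overline{B_R(\delta_\p)}$. By Proposition \ref{prop:tightness} the sequence is tight, so Prokhorov's theorem yields a (non-relabelled) subsequence with $\mu_n \rightharpoonup \mu_0$ for some $\mu_0\in \calP(M)$. The very same estimate \eqref{eqn:C3ineq} from the proof of Proposition \ref{prop:tightness} provides the uniform exponential-moment bound
\[
\int_M \left(\frac{\sinh(\sqrt{c_m}\, r_x)}{\sqrt{c_m}}\right)^{\!\lambda} \d\mu_n(x) \leq C_3.
\]
Taking $\phi(\theta)=(\sinh(\sqrt{c_m}\theta)/\sqrt{c_m})^{\lambda}$, which has exponential (hence superlinear) growth at infinity, Lemma \ref{lemma:CM} applies and yields $\mu_0\in \calPo(M)$. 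Finally, Proposition \ref{lscEnergy} gives
\[
E[\mu_0] \leq \liminf_{n\to\infty} E[\mu_n] = \inf_{\mu\in \calPo(M)} E[\mu],
\]
so $\mu_0$ is a global minimizer.

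The only delicate step is the preservation of the centre-of-mass constraint: because $\log_\p x$ is unbounded, weak convergence does not automatically transfer the identity $\int_M \log_\p x\, \d\mu_n(x)=0$ to the limit. This is precisely where the superlinear-moment bound supplied by the tightness proof combines with Lemma \ref{lemma:CM} to save the argument. Everything else --- the Carlson--Levin inequality, the convexity argument, and the semi-continuity of the nonlocal interaction pushed forward through the Riemannian logarithm --- has already been carried out in the previous propositions, so at this last step the work reduces to careful bookkeeping.
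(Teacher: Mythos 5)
Your proposal is correct and follows essentially the same route as the paper's proof: energy bounded below via Remark \ref{rmk:E-lb}, tightness and Prokhorov to extract a weak limit, the uniform bound \eqref{eqn:C3ineq} combined with Lemma \ref{lemma:CM} (with $\phi(\theta)=(\sinh(\sqrt{c_m}\theta)/\sqrt{c_m})^{\lambda}$) to keep the limit in $\calPo(M)$, and Proposition \ref{lscEnergy} to conclude. No gaps.
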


\begin{proof}
The proof follows essentially from the previous considerations. The energy is bounded below on $\calP_{1,\p} (M)$ -- see Remark \ref{rmk:E-lb}. Hence, define
\[
E_0:=\inf_{\mu\in \calP_{1,\p}(M)} E[\mu],
\]
and consider a minimizing sequence $\{\mu_n\}_{n\geq 1}\subset \mathcal{P}_{1, \p}(M)$ of $E[\cdot]$, i.e.,
\begin{equation}
\label{eqn:limEmun}
\lim_{n \to \infty} E[\mu_n] = E_0.
\end{equation}
Without loss of generality, we can assume that $E[\mu_n]$ is decreasing.  

By Proposition \ref{prop:tightness}, any minimizing sequence is tight, so by Prokhorov's theorem $\{\mu_n\}_{n\geq 1}$ has a subsequence $\{\mu_{n_k}\}_{k\geq 1}$ which converges weakly to $\mu_0 \in\mathcal{P}(M)$. We claim that the limit $\mu_0$ is also in $\mathcal{P}_{1, \p}(M)$, as inferred from Lemma \ref{lemma:CM}. Indeed, $\calW_1(\mu_{n_k},\delta_\p)$ is uniformly bounded from above by Remark \ref{rmk:E-lb} (see \eqref{eqn:E-lb} and also note that $\displaystyle E[\mu_1] \geq E[\mu_{n_k}]$). Also, from the proof of Proposition \ref{prop:tightness}, we know that 
\[
\int_{M} \left(\frac{\sinh(\sqrt{c_m}r_x)}{\sqrt{c_m}}\right)^{\lambda}\d \mu_n(x)
\]
is uniformly bounded from above -- see  \eqref{eqn:C3ineq}. Then, by using Lemma \ref{lemma:CM} with $\phi(r_x) = \left(\frac{\sinh(\sqrt{c_m}r_x)}{\sqrt{c_m}}\right)^{\lambda}$ (note that $\phi$ has superlinear growth at infinity, as required in the assumptions), we infer that $\mu_0 \in \mathcal{P}_{1, \p}(M)$.

Then, from \eqref{eqn:limEmun} and Proposition \ref{lscEnergy} we get
\[
E_0 =\liminf_{k\to\infty}E[\mu_{n_k}]\geq E[\mu_0]\geq E_0.
\]
Finally, we can conclude that
\[
E[\mu_0]= E_0,
\]
so $\mu_0$ is a global energy minimizer in $\mathcal{P}_{1, \p}(M)$.
\end{proof}

\begin{remark}
As noted in Remark \ref{R4.2}, as $c_m\to0+$ the exponential function 
\[
\left(\frac{\sinh(\sqrt{c_m}\theta)}{\sqrt{c_m}}\right)^\lambda
\]
converges to the algebraic function $\theta^\lambda$. Therefore, in such limit, the exponential growth condition on $h$ from \eqref{condiH} reduces to an algebraic growth condition, similar to the assumptions used to show the existence of global energy minimizers in the Euclidean case $c_m=0$ (see \cite[Proposition 8]{carrillo2019reverse}). 
\end{remark}


\section{Unbounded sectional curvatures}
\label{sect:unbounded}
\setcounter{equation}{0}

In this section we consider more general assumptions on the sectional curvatures of the manifold -- see \eqref{eqn:var-bounds}. In particular, the curvatures are allowed to become unbounded at infinity. One of the main tools in this case is the generalized comparison results in Theorem \ref{lemma:Chavel-thms-gen}. Similar to the constant bounds case, we will treat separately the necessary and sufficient conditions on the interaction potential for global energy minimizers to exist. Throughout the section, $\p$ is an arbitrary  fixed pole on $M$.

\subsection{Nonexistence of a global minimizer}
\label{subsect:nonexist-var}
We assume here that the sectional curvatures of $M$ satisfy
\begin{equation}
\label{eqn:K-upperb-un}
\mathcal{K}(x;\sigma)\leq -c_M(r_x) < 0,
\end{equation}
for all $x \in M$ and all two-dimensional subspaces $\sigma \subset T_x M$, where $c_M(\cdot)$ is a positive continuous function of the distance from the pole $\p$.

We will use again probability density functions in the form \eqref{eqn:rhoR} and look into the behaviours $R\to 0+$ and $R\to \infty$. By the exact calculation of the entropy in \eqref{eqn:entropy-rhoR} and the simple estimate of the interaction energy in \eqref{eqn:intenergy-rhoR}, one reaches \eqref{Eupperbound}, which we list it again for convenience:
\[
E[\rho_R]\leq -\frac{|B_R(\p)|^{1-\m}}{1-\m}+\frac{h(2R)}{2}.
\]

{\em Behaviour at zero.} If $h$ is singular at origin, i.e. $\lim_{\theta\to0+}h(\theta)=-\infty$, then $E$ is unbounded from below, and has no global minimizer. 

{\em Behaviour at infinity.} By the volume lower bound in Theorem \ref{lemma:Chavel-thms-gen} we have
\[
|B_R(\p)|\geq \dm w(\dm)\int_0^R\psi_M(\theta)^{\dm-1}\d\theta,
\]
where $\psi_M$ is the solution to the second IVP in \eqref{eqn:psimM}. This yields
\[
E[\rho_R]\leq -\frac{1}{1-\m}\left(\dm w(\dm)\int_0^R \psi_M(\theta)^{\dm-1}\d\theta\right)^{1-\m}+\frac{h(2R)}{2}.
\]

Hence, if $h$ satisfies
\[
\lim_{R\to\infty}\left(-\frac{1}{1-\m}\left(\dm w(\dm)\int_0^R \psi_M(\theta)^{\dm-1}\d\theta\right)^{1-\m}+\frac{h(2R)}{2}\right)=-\infty,
\]
then there is no global energy minimizer as $\lim_{R\to\infty}E[\rho_R]=-\infty$. Since $\int_0^R\psi_M(\theta)^{\dm-1}\d \theta$ tends to infinity as $R \to \infty$, a sufficient condition for the above inequality can be expressed as
\begin{align}\label{N-1}
\lim_{R\to\infty}\frac{h(R)}{\left(\int_0^{R/2} \psi_M(\theta)^{\dm-1}\d\theta\right)^{1-\m}}=0.
\end{align}
Also, for any $0<\delta<R/2$, we have 
\[
\int_0^{R/2}\psi_M(\theta)^{\dm-1}\d \theta\geq \int_{R/2-\delta}^{R/2}\psi_M(\theta)^{\dm-1}\d \theta\geq  \int_{R/2-\delta}^{R/2}\psi_M(R/2-\delta)^{\dm-1}\d \theta=\delta \psi_M(R/2-\delta)^{\dm-1}.
\]
Hence, a sufficient condition for \eqref{N-1} to hold is
\begin{align}\label{N-2}
\lim_{R\to\infty}\frac{h(R)}{\psi_M(R/2-\delta)^{(\dm-1)(1-\m)}}=0,
\end{align}
for some $\delta>0$.

Assume in addition that $c_M(\cdot)$ satisfies
\begin{equation}
\label{eqn:cM-32}
\lim_{\theta\to\infty}\frac{\overline{D}c_M(\theta)}{c_M(\theta)^{3/2}}=0.
\end{equation}
Then, by Lemma \ref{estpsi}, for any $0<\epsilon<1$, there exists $\theta_0>0$ such that 
\begin{equation}
\label{eqn:psiM-lb}
\psi_M(\theta_0)\exp\left((1-\epsilon)\int_{\theta_0}^\theta \sqrt{c_M(t)}\d t\right)\leq \psi_M(\theta),\qquad\forall \theta\geq\theta_0.
\end{equation}
Using \eqref{eqn:psiM-lb}, we infer that a sufficient condition for \eqref{N-2} is
\[
\lim_{R\to\infty}\frac{h(R)}{\exp\left((1-\epsilon)(\dm-1)(1-\m)\int_0^{R/2-\delta} \sqrt{c_M(t)}\d t)\right)}=0.
\]

We now put together the above considerations in the following theorem.
\begin{theorem}[Nonexistence by blow-up or spreading: unbounded curvature case]
\label{thm:nonexist-un}
Let $M$ be a $\dm$-dimensional Cartan-Hadamard manifold with sectional curvatures that satisfy \eqref{eqn:K-upperb-un}, for a positive continuous function $c_M(\cdot)$ that satisfies \eqref{eqn:cM-32}.  Let  $0<\m<1$ and assume that $h:[0,\infty) \to [-\infty,\infty)$ is a non-decreasing lower semi-continuous function which satisfies either
\[
\lim_{\theta\to0+}h(\theta)=-\infty,
\]
or there exists $\delta>0$ and $0<\epsilon<1$ such that
\begin{align}\label{con-nonexist-un}
\lim_{\theta\to\infty}\frac{h(\theta)}{\exp\left((1-\epsilon)(\dm-1)(1-\m)\int_0^{\theta/2-\delta} \sqrt{c_M(t)}\d t)\right)}=0.
\end{align}
Then the energy functional \eqref{eqn:energy} is unbounded from below, and therefore has no global minimizer in $\calP(M)$.
\end{theorem}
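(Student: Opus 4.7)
The strategy mirrors the constant-curvature argument of Theorem \ref{thm:nonexist}: test the energy on the family of uniform densities $\rho_R$ on geodesic balls defined in \eqref{eqn:rhoR}, and let $R\to 0+$ or $R\to\infty$ according to whether $h$ is singular at the origin or grows too slowly at infinity. The computation of the entropy contribution is exact, as in \eqref{eqn:entropy-rhoR}, and by the monotonicity of $h$ together with $\dist(x,y)\leq 2R$ on $B_R(\p)\times B_R(\p)$, the interaction term is bounded above by $h(2R)/2$. This yields the inequality
\[
E[\rho_R]\leq -\frac{|B_R(\p)|^{1-\m}}{1-\m}+\frac{h(2R)}{2},
\]
which is the starting point of both regimes.

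For the blow-up case, $|B_R(\p)|\to 0$ as $R\to 0+$, so the entropy term vanishes; hence if $h(0^+)=-\infty$, the right-hand side tends to $-\infty$ and $E$ is unbounded below. For the spreading case, the plan is to apply the generalized volume lower bound from Theorem \ref{lemma:Chavel-thms-gen}, namely $|B_R(\p)|\geq \dm\, w(\dm)\int_0^R \psi_M(\theta)^{\dm-1}\,\d\theta$, and then to discard most of this integral using that $\psi_M$ is non-decreasing: for any fixed $\delta\in(0,R/2)$,
\[
\int_0^{R/2}\psi_M(\theta)^{\dm-1}\,\d\theta \geq \delta\,\psi_M(R/2-\delta)^{\dm-1}.
\]
This reduces matters to comparing $h(R)$ with $\psi_M(R/2-\delta)^{(\dm-1)(1-\m)}$.

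Next, invoking assumption \eqref{eqn:cM-32} on $c_M$, Lemma \ref{estpsi} supplies, for every $\epsilon\in(0,1)$, a threshold $\theta_0$ and a constant $\psi_M(\theta_0)>0$ such that
\[
\psi_M(\theta)\geq \psi_M(\theta_0)\exp\!\left((1-\epsilon)\int_{\theta_0}^\theta \sqrt{c_M(t)}\,\d t\right),\qquad \forall\,\theta\geq \theta_0.
\]
Applying this with $\theta=R/2-\delta$ and combining with the volume estimate gives, for $R$ large enough,
\[
|B_R(\p)|^{1-\m} \geq C \exp\!\left((1-\epsilon)(\dm-1)(1-\m)\int_{0}^{R/2-\delta}\sqrt{c_M(t)}\,\d t\right),
\]
up to absorbing the lower limit $\theta_0$ and the multiplicative factor $\psi_M(\theta_0)^{(\dm-1)(1-\m)}\delta^{1-\m}$ into the constant $C>0$. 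Substituting into the bound on $E[\rho_R]$, hypothesis \eqref{con-nonexist-un} ensures that $h(2R)$ is dominated by this exponential term as $R\to\infty$, hence $E[\rho_R]\to -\infty$, proving that $E$ admits no global minimizer.

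There is no real obstacle here beyond careful bookkeeping: the only delicate point is that Lemma \ref{estpsi} provides a lower bound for $\psi_M$ only from some $\theta_0>0$ onward, so one must verify that for all sufficiently large $R$ the argument $R/2-\delta$ exceeds $\theta_0$, which is automatic. Absorbing the finite factors $\psi_M(\theta_0)$, $\delta$, and $\theta_0$ into the constant $C$ is harmless since the decisive comparison in \eqref{con-nonexist-un} is a limit at infinity.
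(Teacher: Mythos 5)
Your proposal follows exactly the route of the paper's own proof: test on the uniform densities $\rho_R$, use the exact entropy \eqref{eqn:entropy-rhoR} and the bound $h(2R)/2$ on the interaction, invoke the volume lower bound of Theorem \ref{lemma:Chavel-thms-gen}, reduce to $\delta\,\psi_M(\cdot)^{\dm-1}$ by monotonicity of $\psi_M$, and finish with the lower bound on $\psi_M$ from Lemma \ref{estpsi} under \eqref{eqn:cM-32}. This is the argument of Section \ref{subsect:nonexist-var} almost verbatim, and the blow-up case is identical.

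The one flaw is a variable mismatch in your final assembly. You lower-bound $|B_R(\p)|^{1-\m}$ by $C\exp\bigl((1-\epsilon)(\dm-1)(1-\m)\int_0^{R/2-\delta}\sqrt{c_M(t)}\,\d t\bigr)$ and then claim that hypothesis \eqref{con-nonexist-un} forces $h(2R)$ to be dominated by that quantity. It does not: applying \eqref{con-nonexist-un} at $\theta=2R$ only gives $h(2R)=o\bigl(\exp\bigl((1-\epsilon)(\dm-1)(1-\m)\int_0^{R-\delta}\sqrt{c_M}\bigr)\bigr)$, and being little-$o$ of the \emph{larger} exponential does not imply domination by the \emph{smaller} one (e.g.\ $h$ could grow like $\exp\bigl(\lambda\int_0^{3\theta/8}\sqrt{c_M}\bigr)$). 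The repair is trivial and is exactly what the paper does implicitly: keep the variables matched. Either compare $h(R)$ (the interaction bound for $\rho_{R/2}$) with $|B_{R/2}(\p)|^{1-\m}\geq C\exp\bigl((1-\epsilon)(\dm-1)(1-\m)\int_0^{R/2-\delta}\sqrt{c_M}\bigr)$, so that \eqref{con-nonexist-un} applies at $\theta=R$; or do not halve at all and bound $|B_R(\p)|^{1-\m}\geq C\exp\bigl((1-\epsilon)(\dm-1)(1-\m)\int_0^{R-\delta}\sqrt{c_M}\bigr)$ via $\int_0^R\psi_M^{\dm-1}\geq\delta\,\psi_M(R-\delta)^{\dm-1}$, which then dominates $h(2R)$ by \eqref{con-nonexist-un} at $\theta=2R$. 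The factor $\theta/2-\delta$ in \eqref{con-nonexist-un} encodes precisely this offset between the argument of $h$ and the radius of the ball, and your mid-proof sentence reducing matters to ``$h(R)$ versus $\psi_M(R/2-\delta)^{(\dm-1)(1-\m)}$'' already has it right; only the last sentence reverts to the mismatched pairing.
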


\begin{example}
\label{ex:nonexist}
For certain simple functions $c_M$ the condition \eqref{con-nonexist-un} can be worked out explicitly.
\begin{enumerate}
\item Constant: $c_M(\theta)\equiv c_M$. For this case we essentially recover Theorem \ref{thm:nonexist}. The only difference is the additional coefficient $1-\epsilon$, which can in fact be arbitrarily close to $1$.

\item Power law: $c_M(\theta)=\theta^k$ with $k>0$. Condition \eqref{con-nonexist-un} is equivalent in this case to
\[
\lim_{\theta \to\infty}\frac{h(\theta)}{\exp\left(\frac{(1-\epsilon)(\dm-1)(1-\m)}{k/2+1}\left(\frac{\theta}{2}-\delta\right)^{\frac{k}{2}+1}\right)}=0.
\]
Hence, a necessary condition for global minimizers to exist is that the interaction potential grows exponentially (with a certain exponent on $\theta$ that depends on $k$) at infinity.

\item Exponential growth: $c_M(\theta)=e^{\beta\theta}$ with $\beta>0$.  Condition \eqref{con-nonexist-un} is now equivalent to
\[
\lim_{\theta \to\infty}\frac{h(\theta)}{\exp\left(\frac{(1-\epsilon)(\dm-1)(1-\m)}{\beta/2}\exp\left(\frac{\theta \beta}{4}\right)\right)}=0.
\]
Notably, a very strong growth of the potential (double exponential) is needed to contain the diffusion when the curvatures of the manifold grow exponentially fast at infinity.
\end{enumerate}
\end{example}


\subsection{Existence of a global minimizer}
\label{subsect:exist-var}
In this section, we study the energy minimization problem on $M$ which satisfies
\begin{equation}
\label{eqn:K-lowerbun}
-c_m(r_x) \leq \mathcal{K}(x;\sigma)\leq 0, 
\end{equation}
for all $x \in M$ and all two-dimensional subspaces $\sigma \subset T_x M$,  where $c_m(\cdot)$ is a positive, non-decreasing and continuous function of the distance from the pole $\p$.  Relevant for this section are the upper bounds in Theorem \ref{lemma:Chavel-thms-gen}, with $\psi_m$ the solution to the first IVP in \eqref{eqn:psimM}. 

\begin{lemma}\label{newpsiest}
Let $\psi_m$ be the solution to the first IVP in \eqref{eqn:psimM}, and let $R>0$. Then we have
\[
\psi_m(\theta)\geq \psi_m(R)\exp(\sqrt{c_m(R)}(\theta-R)), \qquad\forall\theta\geq R,
\]
and
\[
\psi_m(\theta)\leq \psi_m(R)\exp(\sqrt{c_m(R)}(\theta-R)),\qquad\forall 0<\theta \leq R.
\]
\end{lemma}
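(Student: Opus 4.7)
The plan is to reduce both inequalities to a single monotonicity statement for the function
\[
\Phi(\theta) := \psi_m(\theta)\, e^{-\sqrt{c_m(R)}(\theta - R)},
\]
which satisfies $\Phi(R) = \psi_m(R)$. Showing that $\Phi$ is non-decreasing on $(0,\infty)$ yields $\Phi(\theta) \geq \Phi(R)$ for $\theta \geq R$ (the first claimed inequality) and $\Phi(\theta) \leq \Phi(R)$ for $\theta \leq R$ (the second). Thus, if I can verify the sign condition $\psi_m'(\theta) - \sqrt{c_m(R)}\,\psi_m(\theta) \geq 0$ on $(0,\infty)$, both claims follow at once.

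To analyse this sign, I would introduce the auxiliary function
\[
W(\theta) := \bigl(\psi_m'(\theta) - \sqrt{c_m(R)}\,\psi_m(\theta)\bigr)\, e^{\sqrt{c_m(R)}\theta},
\]
which shares the sign of $\Phi'(\theta)$. Differentiating $W$ and using the ODE $\psi_m''(\theta) = c_m(\theta)\,\psi_m(\theta)$, after the cross terms $\pm\sqrt{c_m(R)}\psi_m'$ cancel, a short calculation produces the clean identity
\[
W'(\theta) = \bigl(c_m(\theta) - c_m(R)\bigr)\,\psi_m(\theta)\, e^{\sqrt{c_m(R)}\theta}.
\]
Because $c_m$ is non-decreasing and $\psi_m \geq 0$, we have $W'(\theta) \geq 0$ on $[R,\infty)$ and $W'(\theta) \leq 0$ on $(0,R]$, so $W$ attains its global minimum on $(0,\infty)$ at $\theta = R$. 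Consequently, if one can verify that $W(R) \geq 0$, then $W \geq 0$ throughout $(0,\infty)$, $\Phi$ is non-decreasing, and the proof is complete.

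The main obstacle I expect is therefore establishing the pointwise inequality $\psi_m'(R) \geq \sqrt{c_m(R)}\,\psi_m(R)$, equivalently that the logarithmic derivative $u(\theta) := \psi_m'(\theta)/\psi_m(\theta)$ satisfies $u(R) \geq \sqrt{c_m(R)}$. This is immediate when $c_m$ is constant, since then $u(\theta) = \sqrt{c_m}\coth(\sqrt{c_m}\theta) > \sqrt{c_m}$. For variable $c_m$, the plan is to study the Riccati equation $u' = c_m - u^2$ with initial condition $u(0^+) = +\infty$: since $c_m \leq c_m(R)$ on $[0,R]$, one compares $u$ against the constant equilibrium value $\sqrt{c_m(R)}$ of the frozen Riccati equation $v' = c_m(R) - v^2$, checking by a barrier argument on $[0,R]$ that $u$ does not cross below $\sqrt{c_m(R)}$ on this interval. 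This Riccati/barrier step is the delicate part of the argument, whereas the monotonicity computation for $W$ in the first two paragraphs is routine once the auxiliary function is chosen correctly.
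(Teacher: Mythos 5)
Your reduction to the monotonicity of $\Phi$ and the identity $W'(\theta)=(c_m(\theta)-c_m(R))\psi_m(\theta)e^{\sqrt{c_m(R)}\theta}$ are correct, and they correctly isolate the crux: everything hinges on $W(R)\geq 0$, i.e.\ on $\psi_m'(R)\geq\sqrt{c_m(R)}\,\psi_m(R)$. But the barrier argument you propose for this step fails, and in fact the inequality is false in general. On $[0,R]$ you have $c_m(\theta)\leq c_m(R)$, so at a putative first downward crossing $u(\theta_*)=\sqrt{c_m(R)}$ the Riccati equation gives $u'(\theta_*)=c_m(\theta_*)-c_m(R)\leq 0$: the vector field points \emph{into} the forbidden region, so nothing prevents $u$ from dropping below the barrier before reaching $R$ (the argument would work if $c_m$ were non-\emph{increasing} on $[0,R]$; you have the monotonicity the wrong way around for this step). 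Concretely, $\psi_m(\theta)=\theta e^{\theta^2}$ solves the IVP with $c_m(\theta)=6+4\theta^2$, which is positive, continuous and non-decreasing; at $R=1$ one has $\psi_m'(1)=3e$ while $\sqrt{c_m(1)}\,\psi_m(1)=\sqrt{10}\,e>3e$. Hence $W(1)<0$ and $\Phi$ is strictly decreasing just to the right of $R=1$; indeed $\psi_m(1.1)=1.1\,e^{1.21}\approx 3.689$ whereas $\psi_m(1)e^{0.1\sqrt{10}}\approx 3.729$, so even the first inequality in the statement of the lemma fails there. The general mechanism: $V:=(\psi_m')^2-c_m\psi_m^2$ satisfies $V(0)=1$ and $V'=-c_m'\,\psi_m^2\leq 0$, so $V$ becomes negative as soon as $\int_0^\theta c_m'(t)\psi_m(t)^2\,\d t$ exceeds $1$, which always happens for strictly increasing unbounded $c_m$.

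To be fair, this is not a defect of your route alone. The paper's own proof (Appendix A) works with $f=\psi_m/\psi_m'=1/u$ and writes the comparison solution as $\frac{1}{\sqrt{c_m(R)}}\tanh\left(\mathrm{arctanh}(\sqrt{c_m(R)}f(R))+\sqrt{c_m(R)}(\theta-R)\right)$, which tacitly requires $\sqrt{c_m(R)}f(R)<1$ --- exactly the same unproved inequality; in the example above $\sqrt{c_m(1)}f(1)=\sqrt{10}/3>1$ and the $\mathrm{arctanh}$ is undefined. So your Wronskian computation is a cleaner route to the same bottleneck that the paper's explicit Riccati solution hits, but the step you flag as ``delicate'' is a genuine gap: it is fine for constant $c_m$ (where $u=\sqrt{c_m}\coth(\sqrt{c_m}\theta)>\sqrt{c_m}$), and cannot be closed, by any method, for general non-decreasing $c_m$.
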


\begin{proof}
The proof follows from some basic ODE considerations. See Appendix \ref{appendix:psimest}.
\end{proof}

The Carlson-Levin inequality in Lemma \ref{mainineq} can be further generalized to manifolds of unbounded curvature. The result is the following.
\begin{lemma}[Generalized Carlson--Levin inequality (unbounded curvature version)]\label{maininequn}
Let $M$ be a $\dm$-dimensional Cartan--Hadamard manifold whose sectional curvatures satisfy \eqref{eqn:K-lowerbun}, for some positive continuous function $c_m(\cdot)$, and let $\p$ be a fixed pole on $M$. Also, let $0<q<1$ and $\lambda>\frac{(\dm-1)(1-q)}{q}$. Then there exists a positive constant $\tilde{C}_1>0$ that depends on $\lambda, q, c_m$, and $\dm$, such that
\[
\int_M\rho(x)^q \dx\leq \tilde{C}_1(\lambda, q, c_m, \dm)\left(\int_M\rho(x) \dx\right)^{(1-p)q}\left(\int_M\psi_m(r_x)^\lambda \rho(x) \dx \right)^{pq},
\]
for any $\rho(\cdot) \geq 0$, where $p=\frac{(\dm-1)(1-q)}{\lambda q}\in(0, 1)$. 
\end{lemma}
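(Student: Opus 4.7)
The plan is to mimic closely the proof of Lemma~\ref{mainineq}, replacing the explicit function $\sinh(\sqrt{c_m}\theta)/\sqrt{c_m}$ everywhere by the general $\psi_m(\theta)$ arising from the first IVP in \eqref{eqn:psimM}. Fix $R>0$ and $\rho(\cdot)\geq 0$. The first step is to split $M$ into the regions $\{r_x < R\}$ and $\{r_x\geq R\}$ and apply H\"older's inequality with exponents $1/q$ and $1/(1-q)$ on each piece, exactly as in \eqref{eqn:est-tlr}--\eqref{eqn:est-tgr}, invoking the volume upper bound from Theorem~\ref{lemma:Chavel-thms-gen} for the first region and the Jacobian bound $|J(\exp_\p)(\theta u)| \leq \psi_m(\theta)^{\dm-1}$ together with geodesic spherical coordinates (as in \eqref{eqn:dV-bound}) for the second. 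This yields
\begin{equation*}
\int_M \rho(x)^q \dx \leq A(R)\left(\int_M \rho(x) \dx\right)^q + B(R)\left(\int_M \psi_m(r_x)^\lambda \rho(x) \dx\right)^q,
\end{equation*}
with $A(R) = \bigl(\dm w(\dm)\int_0^R \psi_m(\theta)^{\dm-1}\,\d\theta\bigr)^{1-q}$ and $B(R) = \bigl(\dm w(\dm)\int_R^\infty \psi_m(\theta)^{\dm-1-\lambda q/(1-q)}\,\d\theta\bigr)^{1-q}$.

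The next step is to estimate $A(R)$ and $B(R)$ using Lemma~\ref{newpsiest}. Its upper bound gives
\begin{equation*}
\int_0^R \psi_m(\theta)^{\dm-1}\,\d\theta \leq \psi_m(R)^{\dm-1}\int_0^R e^{\sqrt{c_m(R)}(\dm-1)(\theta-R)}\,\d\theta \leq \frac{\psi_m(R)^{\dm-1}}{\sqrt{c_m(R)}(\dm-1)}.
\end{equation*}
Since the hypothesis $\lambda > (\dm-1)(1-q)/q$ forces $\dm-1-\lambda q/(1-q) < 0$, the lower bound in Lemma~\ref{newpsiest} turns into an upper bound upon raising to this negative power, yielding
\begin{equation*}
\int_R^\infty \psi_m(\theta)^{\dm-1-\lambda q/(1-q)}\,\d\theta \leq \frac{(1-q)\,\psi_m(R)^{\dm-1-\lambda q/(1-q)}}{\sqrt{c_m(R)}\bigl(\lambda q-(\dm-1)(1-q)\bigr)}.
\end{equation*}
The $R$-dependent prefactor $1/\sqrt{c_m(R)}$ is then absorbed via the monotonicity and positivity of $c_m$: since $c_m(R)\geq c_m(0)>0$, one may replace $\sqrt{c_m(R)}$ by $\sqrt{c_m(0)}$ throughout, producing $R$-independent constants $\tilde\alpha_1,\tilde\alpha_2$ and the compact estimates
\begin{equation*}
A(R) \leq \tilde\alpha_1\,\psi_m(R)^{\beta_1},\qquad B(R) \leq \tilde\alpha_2\,\psi_m(R)^{-\beta_2},
\end{equation*}
with $\beta_1=(1-q)(\dm-1)$ and $\beta_2=\lambda q - (1-q)(\dm-1)$ as in \eqref{eqn:alphabeta}.

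At this point the bounds are structurally identical to \eqref{eqn:estA}--\eqref{eqn:estB}, with $\sinh(\sqrt{c_m}R)/\sqrt{c_m}$ everywhere replaced by $\psi_m(R)$, and the optimization in $R$ transfers verbatim. The strict monotonicity of $\psi_m$ on $[0,\infty)$ (guaranteed by $\psi_m(0)=0$ and $\psi_m'\geq 1$ from \eqref{eqn:psi-ineq-mM}) together with the opposite signs of the exponents $\beta_1$ and $-\beta_2$ ensure a unique critical point $R_*$ determined by
\begin{equation*}
\psi_m(R_*)^{\beta_1+\beta_2} = \frac{\tilde\alpha_2\beta_2}{\tilde\alpha_1\beta_1}\left(\frac{\int_M \psi_m(r_x)^\lambda \rho(x)\dx}{\int_M \rho(x)\dx}\right)^q,
\end{equation*}
and substituting $R_*$ back produces the stated inequality with $p=(\dm-1)(1-q)/(\lambda q)$ and an explicit constant $\tilde C_1$ depending only on $\lambda, q, c_m$, and $\dm$. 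The only genuinely new ingredient beyond Lemma~\ref{mainineq} is the use of Lemma~\ref{newpsiest} together with the monotonicity of $c_m$ to absorb the $R$-dependence of the coefficients; I do not expect this to present real difficulty, since everything else reduces to mechanical transcription of the bounded-curvature argument.
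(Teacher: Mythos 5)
Your proposal is correct and follows essentially the same route as the paper's own proof in Appendix~B: the same H\"older splitting with the generalized Jacobian and volume bounds, the same use of Lemma~\ref{newpsiest} (with the lower bound reversed by the negative exponent $\dm-1-\lambda q/(1-q)<0$ and $\sqrt{c_m(R)}\geq\sqrt{c_m(0)}$ absorbing the $R$-dependence), and the same optimization over $R$ via the strict monotonicity and unbounded range of $\psi_m$. Nothing further is needed.
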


\begin{proof}
The arguments follow very closely the proof of Lemma \ref{mainineq}. See Appendix \ref{appendix:gen-CL-ineq}. 
\end{proof}

To show the existence of global minimizers we assume that $h$ satisfies 
\begin{equation}
\label{eqn:condH0-unb}
h(0)=0,
\end{equation}
and
\begin{equation}
\label{condiH-unb0}
\liminf_{\theta\to\infty}\frac{h(\theta)}{\exp\left(\tilde{\lambda} \int_0^\theta\sqrt{c_m(\theta)}\d t\right)}>0,\qquad \text{ for some }\tilde{\lambda}>\frac{(\dm-1)(1-\m)}{\m}.
\end{equation}
Condition \eqref{eqn:condH0-unb} simply sets $h(0)$ to a constant (this is the same condition as in \eqref{eqn:condH0}), as $h$ cannot be singular at origin for a ground state to exist. Condition \eqref{condiH-unb0} generalizes \eqref{condiH} from the constant bound case; see also the necessary condition \eqref{con-nonexist-un} from Theorem \ref{thm:nonexist-un}. The reason we use notation $\tilde{\lambda}$ in \eqref{condiH-unb0} is to reserve the symbol $\lambda$ for a different constant, in order to transfer more easily the arguments made in Section \ref{sect:existence} -- see \eqref{condiH-unb} below.

Set $\lambda$ such that $\tilde{\lambda} > \lambda > \frac{(\dm-1)(1-\m)}{\m}$. By Lemma \ref{estpsi} we write
\begin{align*}
\liminf_{\theta\to\infty}\frac{h(\theta)}{\psi_m(\theta)^\lambda}&\geq\liminf_{\theta\to\infty}\frac{h(\theta)}{\theta^\lambda\exp\left(\lambda\int_0^\theta\sqrt{c_m(\theta)}\d t\right)}\\
& = \liminf_{\theta\to\infty}\frac{h(\theta)}{\exp\left(\tilde{\lambda} \int_0^\theta\sqrt{c_m(\theta)}\d t\right)}\times\frac{\exp\left((\tilde{\lambda}-\lambda)\int_0^\theta\sqrt{c_m(\theta)}\right)}{\theta^{\lambda}}\\
&>0,
\end{align*}
where for the last inequality we used \eqref{condiH-unb0} and 
\[
\lim_{\theta\to\infty}\frac{\exp\left((\tilde{\lambda}- \lambda)\int_0^\theta\sqrt{c_m(\theta)}\right)}{\theta^{\tilde{\lambda}}}=\infty.
\]
Hence, if $h$ satisfies \eqref{condiH-unb0}, then it also satisfies
\begin{equation}
\label{condiH-unb}
\liminf_{\theta\to\infty}\frac{h(\theta)}{\psi_m(\theta)^\lambda} >0, \qquad \text{ for some } \lambda >\frac{(\dm-1)(1-\m)}{\m}.
\end{equation}

\begin{remark}
 \label{rmk:weaker-cond}
 \normalfont
 In the proof of Theorem \ref{thm:existence-un} we will use in fact the weaker assumption \eqref{condiH-unb} on the growth at infinity of $h$. We choose however to list the growth assumption as in \eqref{condiH-unb0} as this is given in terms of the known function $c_m(\cdot)$, rather than the unknown solution $\psi_m(\cdot)$ of the IVP.
 \end{remark}

\begin{proposition}[Tightness of the minimizing sequences]
\label{prop:tightnessun}
Let $M$ be a $\dm$-dimensional Cartan-Hadamard manifold whose sectional curvatures satisfy \eqref{eqn:K-lowerbun}, where $c_m(\cdot)$ is a positive, non-decreasing, and continuous function of the distance from a fixed pole $\p$. Also, assume $0<\m<1$ and that $h$ is lower semi-continuous, non-decreasing, and satisfies \eqref{eqn:condH0-unb} and \eqref{condiH-unb0}. Then, any minimizing sequence $\{\mu_n\}_{n \geq 1}\subset \mathcal{P}_{1, \p}(M)$ of the functional $E[\cdot]$ is tight.
\end{proposition}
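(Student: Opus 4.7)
The plan is to mirror the proof of Proposition \ref{prop:tightness}, replacing the hyperbolic sine functions by $\psi_m$ and Lemma \ref{mainineq} by its unbounded-curvature analog Lemma \ref{maininequn}. Fix a minimizing sequence $\{\mu_n\}_{n\geq 1}\subset\calPo(M)$, with $\rho_n$ denoting the absolutely continuous part of $\mu_n$. Apply Lemma \ref{maininequn}, together with $\int_M\rho_n\,\dx \leq 1$, to estimate the entropy part of the energy from below:
\begin{equation*}
-\frac{1}{1-\m}\int_M\rho_n(x)^\m\dx \geq -\frac{\tilde{C}_1}{1-\m}\left(\int_M\psi_m(r_x)^\lambda \rho_n(x)\dx\right)^{p\m},
\end{equation*}
where $\lambda$ is chosen as in \eqref{condiH-unb} (implied by the stronger hypothesis \eqref{condiH-unb0}, as shown just before the statement) and $p=(\dm-1)(1-\m)/(\lambda\m)\in(0,1)$. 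Next, from $h(0)=0$, the lower semi-continuity and monotonicity of $h$, and \eqref{condiH-unb}, I would extract constants $\gamma_1>0$ and $\gamma_2\in\bbr$ (depending only on $h$, $\lambda$, $c_m$) such that
\begin{equation*}
h(\theta)\geq \gamma_1\psi_m(\theta)^\lambda + \gamma_2,\qquad\forall \theta\geq 0,
\end{equation*}
exactly as in \eqref{h-sinhest}.

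The next step is to split a quarter of the interaction integral and invoke Proposition \ref{convineq} with $\mathcal{H}(\theta)=\psi_m(\theta)^\lambda$; convexity of $\mathcal{H}$ follows from $\psi_m''=c_m\psi_m\geq 0$, $\psi_m'\geq 1$, and $\psi_m(0)=0$ (so $\psi_m$ is convex, non-decreasing, and non-negative, whence $\psi_m^\lambda$ inherits these properties for $\lambda\geq 1$; the case $\lambda<1$ can be handled by raising $\lambda$ slightly, which only strengthens the growth hypothesis \eqref{condiH-unb}). Combining these pieces gives
\begin{equation*}
E[\mu_n]\geq f(X_n)+\frac{\gamma_2}{4}+\frac{1}{4}\iint_{M\times M} h(\dist(x,y))\,\d\mu_n(x)\d\mu_n(y),
\end{equation*}
where $X_n:=\int_M\psi_m(r_x)^\lambda \rho_n(x)\dx$ and $f(X):=-\frac{\tilde{C}_1}{1-\m}X^{p\m}+\frac{\gamma_1}{4}X$. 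Since $0<p\m<1$ and $\tilde{C}_1,\gamma_1>0$, the function $f$ attains a global minimum $C_2\in\bbr$ on $[0,\infty)$.

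The remainder is routine. Since $E[\mu_n]\leq E[\mu_1]$, the above lower bound forces $\iint h(\dist(x,y))\,\d\mu_n\d\mu_n$ to be uniformly bounded in $n$; feeding this back into $h\geq\gamma_1\psi_m^\lambda+\gamma_2$ and applying Proposition \ref{convineq} once more yields a uniform upper bound $C_3$ on $\int_M\psi_m(r_x)^\lambda\,\d\mu_n(x)$. The monotonicity of $\psi_m$ from \eqref{eqn:psi-ineq-mM} then gives the Markov-type estimate
\begin{equation*}
\int_{r_x\geq R}\d\mu_n(x)\leq \frac{C_3}{\psi_m(R)^\lambda},\qquad\forall R>0,\;\forall n\geq 1,
\end{equation*}
and Lemma \ref{newpsiest} guarantees $\psi_m(R)\to\infty$ as $R\to\infty$, establishing tightness. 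The principal point requiring care, as opposed to the constant-bound case, is verifying that $\psi_m^\lambda$ is convex and non-decreasing so that Proposition \ref{convineq} applies verbatim; once that is pinned down, the rest of the argument transfers line-by-line from the proof of Proposition \ref{prop:tightness}.
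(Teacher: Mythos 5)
Your proposal is correct and follows essentially the same route as the paper, which itself only sketches this proof by transplanting the argument of Proposition \ref{prop:tightness} with $\sinh(\sqrt{c_m}\theta)/\sqrt{c_m}$ replaced by $\psi_m(\theta)$ and Lemma \ref{mainineq} by Lemma \ref{maininequn}. Your extra check that $\mathcal{H}=\psi_m^\lambda$ is convex and non-decreasing (as required by Proposition \ref{convineq}) is a legitimate point the paper glosses over; just note that your patch for $\lambda<1$ (raising the exponent to at least $1$) is only available when the raised exponent can still be taken strictly below the $\tilde{\lambda}$ of \eqref{condiH-unb0}, i.e.\ when $\tilde{\lambda}>1$ --- a caveat that the paper's own argument shares.
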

\begin{proof}[Sketch of the proof]
The proof follows closely the arguments in the proof of Proposition \ref{prop:tightness} by replacing $ \sinh(\sqrt{c_m}\theta)/\sqrt{c_m}$  with $\psi_m(\theta)$. An analogue of \eqref{ineq:Gmun} can first be derived by using Lemma \ref{maininequn}, where $\lambda$ is the constant from \eqref{condiH-unb}. Then, instead of \eqref{eqn:hosinh} one can use \eqref{condiH-unb} to find an analogue of \eqref{h-sinhest} with $\psi_m(\theta)$ in place of $\sinh(\sqrt{c_m}\theta)/\sqrt{c_m}$. From there on, the arguments in the proof of Proposition \ref{prop:tightness} follow exactly, and a uniform upper bound as in \eqref{eqn:C3ineq} can be derived, i.e.,
\begin{align}\label{eqn:C3inequn}
\tilde{C}_3\geq \int_M\psi_m(r_x)^\lambda \d\mu_n(x), \qquad\forall n\in\mathbb{N},
\end{align}
for some constant $\tilde{C}_3>0$ which does not depend on $\mu_n$. Eventually, \eqref{C3inq2} is adjusted into
\begin{equation}
\label{C3inq2-un}
\int_{r_x\geq R}\d\mu_n(x)\leq \frac{\tilde{C}_3}{\psi_m(R)^\lambda}, \qquad \forall n \in \mathbb{N},
\end{equation}
where the constant $\tilde{C}_3$ does not depend on $R$ or $n$. From here, we infer that $\{\mu_n\}_{n\geq1}$ is tight.
\end{proof}

\begin{proposition}[Lower semi-continuity of the energy]\label{lscEnergyun}
Let $M$ and $h$ satisfy the assumptions of Proposition \ref{prop:tightnessun}, and $\p$ be a fixed pole in $M$. Then, for $0<\m<1$, $E[\cdot]$ is lower semi-continuous on minimizing sequences, i.e., for any minimizing sequence $\{\mu_n\}_{n \geq 1}\subset\mathcal{P}_{1, \p}(M)$ of $E$ that converges weakly to $\mu_0$, we have
\[
\liminf_{n\to\infty}E[\mu_n]\geq E[\mu_0].
\]
\end{proposition}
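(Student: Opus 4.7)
The plan is to mirror the two-part structure of the proof of Proposition \ref{lscEnergy}, adapting each step to the unbounded curvature setting by replacing $\sinh(\sqrt{c_m}\theta)/\sqrt{c_m}$ with $\psi_m(\theta)$ throughout, and relying on the generalized Carlson-Levin inequality (Lemma \ref{maininequn}) and the tightness bounds \eqref{eqn:C3inequn}--\eqref{C3inq2-un} derived in Proposition \ref{prop:tightnessun}. Let $\{\mu_n\}_{n\geq 1}\subset \calPo(M)$ be a minimizing sequence that converges weakly to $\mu_0\in\calP(M)$, and denote by $\rho_n$ and $\rho_0$ the absolutely continuous parts.

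For the entropy part, I would first fix a radius $r>0$ and invoke \cite[Proposition 7.7]{Santambrogio2015} applied to the convex lower semi-continuous function $U(x)=x^\m/(\m-1)$ with recession constant $L=0$, to conclude that
\[
\liminf_{n\to\infty}\frac{1}{\m-1}\int_{\overline{B_r(\p)}}\rho_n(x)^\m\dx \geq \frac{1}{\m-1}\int_{\overline{B_r(\p)}}\rho_0(x)^\m\dx.
\]
Next, I would apply Lemma \ref{maininequn} to $\rho_n\cdot\ind_{B_r(\p)^c}$, combine with the absolute continuity $\rho_n\leq\mu_n$, and use \eqref{eqn:C3inequn} together with the tightness estimate \eqref{C3inq2-un} to obtain
\[
\int_{B_r(\p)^c}\rho_n(x)^\m\dx \leq \tilde{C}_1 \tilde{C}_3^\m \, \psi_m(r)^{-\lambda(1-p)\m},
\]
with a constant independent of $n$ and $r$. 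Since $\psi_m(r)\to\infty$ as $r\to\infty$ (this follows from \eqref{eqn:psi-ineq-mM}, which gives $\psi_m(r)\geq r$), letting first $n\to\infty$ and then $r\to\infty$ in the resulting chain of inequalities yields
\[
\liminf_{n\to\infty}\frac{1}{\m-1}\int_M\rho_n(x)^\m\dx \geq \frac{1}{\m-1}\int_M\rho_0(x)^\m\dx.
\]

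For the interaction energy part, no adaptation is needed: the argument from Proposition \ref{lscEnergy} only uses that $\exp_\p:T_\p M\to M$ is a global diffeomorphism (which holds on any Cartan-Hadamard manifold, independently of curvature bounds) together with lower semi-continuity and boundedness below of $h$. Pushing $\mu_n$ and $\mu_0$ forward by $f=\log_\p$, observing that $f_\#\mu_n\rightharpoonup f_\#\mu_0$ weakly on $T_\p M\simeq\bbr^\dm$, and applying \cite[Proposition 7.2]{Santambrogio2015} to the lower semi-continuous interaction potential $\tilde{h}(u,v):=h(\dist(\exp_\p u,\exp_\p v))$ on $\bbr^\dm\times\bbr^\dm$ gives
\[
\liminf_{n\to\infty}\iint_{M\times M}h(\dist(x,y))\d\mu_n(x)\d\mu_n(y)\geq \iint_{M\times M}h(\dist(x,y))\d\mu_0(x)\d\mu_0(y).
\]
Adding the two lower semi-continuity estimates yields the claim.

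The main (and essentially only) technical obstacle is controlling the tail of the entropy uniformly in $n$: I need to know that the $L^\m$-mass of $\rho_n$ outside a large ball vanishes uniformly as $r\to\infty$. This is precisely where the unbounded-curvature version of Carlson-Levin (Lemma \ref{maininequn}) and the uniform moment bound \eqref{eqn:C3inequn} are indispensable, and where the hypothesis \eqref{condiH-unb0} on the growth of $h$ enters through Proposition \ref{prop:tightnessun}. Everything else is a verbatim transcription of the bounded-curvature argument with $\sinh(\sqrt{c_m}\cdot)/\sqrt{c_m}$ replaced by $\psi_m(\cdot)$.
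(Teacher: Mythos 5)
Your proposal is correct and follows essentially the same route as the paper's own (sketched) proof: the entropy part is handled on balls via \cite[Proposition 7.7]{Santambrogio2015} with the tail controlled uniformly by Lemma \ref{maininequn} and the bounds \eqref{eqn:C3inequn}--\eqref{C3inq2-un}, using $\psi_m(r)\to\infty$, and the interaction part is transferred verbatim to $T_\p M$ via $\log_\p$. No gaps.
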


\begin{proof}[Sketch of the proof]
The proof is similar to that of Proposition \ref{lscEnergy}, by replacing $\frac{\sinh(\sqrt{c_m}\theta)}{\sqrt{c_m}}$  with $\psi_m(\theta)$.  Indeed, denote by $\rho_n$ and $\rho_0$ the absolutely continuous parts of $\mu_n$ and $\mu_0$, respectively. Following the argument leading to \eqref{entext}, by using the generalized Carlson-Levin inequality in Lemma \ref{maininequn} (see also \eqref{eqn:C3inequn} and \eqref{C3inq2-un}), one finds
\[
\int_{B_r(\p)^c}\rho_n(x)^\m \dx \leq \frac{\tilde{C}_1\tilde{C}_3^\m}{\psi_m(r)^{\lambda(1-p)\m}}.
\]
Here, $r>0$ is a fixed radius, $\lambda$ is the constant from \eqref{condiH-unb}, $p \in (0,1)$ is defined in Lemma \ref{maininequn}, and $\tilde{C}_1$ and $\tilde{C}_3$ are constants that do not depend on $r$ or $n$.

Then, since $\lim_{r\to\infty}\psi_m(r)=\infty$, one can follow the argument in the proof of Proposition \ref{lscEnergy} to infer the lower semi-continuity of the entropy:
\[
\frac{1}{\m-1} \int_M\rho_0(x)^\m\d x\leq \liminf_{n\to\infty}\left(\frac{1}{\m-1} \int_M\rho_n(x)^\m\d x\right).
\]
The lower semi-continuity of the interaction energy can be obtained from the same argument as in Part 2 of the proof of Proposition \ref{lscEnergy}. The thesis of the theorem follows from combining the two facts.
\end{proof}

The main result is the following theorem.
\begin{theorem}[Existence of global minimizer: unbounded curvature case]
\label{thm:existence-un}
Let $M$ be a $\dm$-dimensional Cartan-Hadamard manifold whose sectional curvatures that satisfy \eqref{eqn:K-lowerbun}, where $c_m(\cdot)$ is a positive, non-decreasing, and continuous function of the distance from a fixed pole $\p$. Also, assume $0<\m<1$ and that $h$ is lower semi-continuous, non-decreasing, and satisfies  \eqref{eqn:condH0-unb} and \eqref{condiH-unb0}. Then, the energy functional $E[\cdot]$ given by \eqref{eqn:energy} has a global minimizer in $\calP_{1,\p}(M)$.
\end{theorem}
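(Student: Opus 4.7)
The proof will mirror the strategy used for Theorem \ref{thm:existence}, substituting the unbounded-curvature analogues of the tightness and lower semi-continuity results, namely Propositions \ref{prop:tightnessun} and \ref{lscEnergyun}. The plan is to first observe that the energy is bounded below on $\calPo(M)$ (by the same argument as in Remark \ref{rmk:E-lb}, now using the analogues \eqref{eqn:C3inequn} and \eqref{C3inq2-un} in place of \eqref{eqn:C3ineq} and \eqref{C3inq2}, together with the Carlson--Levin inequality Lemma \ref{maininequn}). We can therefore set $E_0 := \inf_{\mu \in \calPo(M)} E[\mu] \in \bbr$ and pick a minimizing sequence $\{\mu_n\}_{n \geq 1} \subset \calPo(M)$.

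Next, by Proposition \ref{prop:tightnessun}, $\{\mu_n\}$ is tight, so Prokhorov's theorem produces a subsequence $\{\mu_{n_k}\}$ converging weakly to some $\mu_0 \in \calP(M)$. To upgrade this weak limit to an element of $\calPo(M)$, I would invoke Lemma \ref{lemma:CM} with the test function
\[
\phi(r_x) = \psi_m(r_x)^{\lambda},
\]
where $\lambda$ is the constant from \eqref{condiH-unb}. Uniform boundedness of $\int_M \phi(r_x)\, \d\mu_{n_k}(x)$ is exactly \eqref{eqn:C3inequn}, and uniform boundedness of $\calW_1(\mu_{n_k}, \delta_\p)$ follows from the analogue of \eqref{eqn:E-lb}, which holds since $\psi_m$ has at least linear growth (note $\psi_m(\theta) \geq \theta$ by \eqref{eqn:psi-ineq-mM}).

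The key step to verify in applying Lemma \ref{lemma:CM} is the superlinear growth of $\phi$ at infinity. This follows from the lower bound part of Lemma \ref{estpsi}: provided $c_m(\cdot)$ satisfies \eqref{eqn:c32}, one has $\psi_m(\theta) \geq \psi_m(\theta_0) \exp\bigl((1-\epsilon)\int_{\theta_0}^\theta \sqrt{c_m(t)}\, \d t\bigr)$ for $\theta$ large, and since $c_m(\cdot) > 0$ is non-decreasing, this grows at least exponentially, hence superlinearly. (If \eqref{eqn:c32} is not assumed, one can instead appeal directly to the first inequality in Lemma \ref{newpsiest} with some fixed $R>0$, which immediately gives exponential growth of $\psi_m$ at infinity without the $3/2$-condition.) This is the main subtlety, since in the bounded curvature case the superlinear growth of $\sinh(\sqrt{c_m}\,\cdot)/\sqrt{c_m}$ was obvious. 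With this in hand, Lemma \ref{lemma:CM} yields $\mu_0 \in \calPo(M)$.

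Finally, Proposition \ref{lscEnergyun} gives
\[
E_0 = \liminf_{k \to \infty} E[\mu_{n_k}] \geq E[\mu_0] \geq E_0,
\]
so $E[\mu_0] = E_0$ and $\mu_0$ is a global minimizer in $\calPo(M)$. The proof is thus a direct transcription of the proof of Theorem \ref{thm:existence}, with the only nontrivial verification being the superlinear growth of $\psi_m$ needed to apply Lemma \ref{lemma:CM}.
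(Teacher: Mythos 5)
Your proposal is correct and takes essentially the same route as the paper, which simply cites Propositions \ref{prop:tightnessun} and \ref{lscEnergyun}, Lemma \ref{lemma:CM}, and the analogue of the lower bound \eqref{eqn:E-lb}. Your explicit verification that $\psi_m(\cdot)^{\lambda}$ grows superlinearly (via Lemma \ref{newpsiest}, avoiding the $3/2$-condition \eqref{eqn:c32}) is a detail the paper leaves implicit, and it is handled correctly.
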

\begin{proof}
The proof follows the same argument used to prove Theorem \ref{thm:existence}, using the tightness of minimizing sequences (Proposition \ref{prop:tightnessun}), the lower semi-continuity of the energy functional (Proposition \ref{lscEnergyun}), and the conservation of the centre of mass (Lemma \ref{lemma:CM}). We also note that a bound analogous to \eqref{eqn:E-lb} (see Remark \ref{rmk:E-lb}) holds in the present case as well, following the proof of Proposition \ref{prop:tightnessun}. \end{proof}

\begin{example} 
\label{ex:exist}
We consider here several examples of functions $c_m$ -- see also Example \ref{ex:nonexist}.
\begin{enumerate}
\item Constant: $c_m(\theta)\equiv c_m$. For this case, condition \eqref{condiH-unb0} reduces to \eqref{condiH}, so we recover Theorem \ref{thm:existence}.
\item Power law: $c_m(\theta)=\theta^k$ with $k>0$. Condition \eqref{condiH-unb0} is equivalent in this case to
\[
\liminf_{\theta\to\infty}\frac{h(\theta)}{\exp\left(\frac{\lambda}{k/2+1}\theta^{k/2+1}\right)}>0.
\]
\item Exponential growth: $c_m(\theta)=e^{\beta\theta}$ with $\beta>0$.  In this case, \eqref{condiH-unb0} becomes
\[
\liminf_{\theta\to\infty}\frac{h(\theta)}{\exp\left(\frac{\lambda}{\beta}\exp\left(\beta\theta\right)\right)}>0.
\]
Therefore, an exponential and a double exponential growth of the potential contains the spreading when the curvatures of the manifold grow algebraically, respectively exponentially, at infinity.
\end{enumerate}
\end{example}

\begin{remark}
Two two conditions \eqref{con-nonexist-un} and \eqref{condiH-unb0} are complementary, in the sense that they cannot hold at the same time. Indeed, suppose the sectional curvatures of $M$ satisfy both a lower and an upper bound: $-c_m(r_x) \leq \mathcal{K}(x;\sigma)\leq -c_M(r_x)$, for all points $x$ and all sections $\sigma$, where $c_m(\cdot)$ is non-decreasing. Suppose $h$ satisfies \eqref{condiH-unb0} for some $\tilde{\lambda}>\frac{(\dm-1)(1-\m)}{\m}$. Then, since $0<\m<1$, we have
\[
\tilde{\lambda}>\frac{(\dm-1)(1-\m)}{\m}>(1-\epsilon)(\dm-1)(1-\m),
\]
for any $0<\epsilon<1$.

Therefore, with $\delta>0$, we can estimate
\begin{align*}
\frac{h(\theta)}{\exp\left((1-\epsilon)(\dm-1)(1-\m)\int_0^{\theta/2-\delta} \sqrt{c_M(t)}\d t)\right)} & \geq \frac{h(\theta)}{\exp\left(\tilde{\lambda}\int_0^{\theta/2-\delta} \sqrt{c_M(t)}\d t)\right)}\\
& \geq \frac{h(\theta)}{\exp\left(\tilde{\lambda}\int_0^{\theta} \sqrt{c_M(t)}\d t)\right)} \\
& \geq \frac{h(\theta)}{\exp\left(\tilde{\lambda}\int_0^{\theta} \sqrt{c_m(t)}\d t)\right)}
\end{align*}
By sending $\theta \to \infty$ we then find that \eqref{con-nonexist-un} cannot hold simultaneously with \eqref{condiH-unb0}, as otherwise it would lead to the following contradiction:
\begin{align*}
0=&\lim_{\theta\to\infty}\frac{h(\theta)}{\exp\left((1-\epsilon)(\dm-1)(1-\m)\int_0^{\theta/2-\delta} \sqrt{c_M(t)}\d t)\right)}\\
\geq&\liminf_{\theta\to\infty}\frac{h(\theta)}{\exp\left(\tilde{\lambda}\int_0^{\theta} \sqrt{c_m(t)}\d t)\right)}>0.
\end{align*}
\end{remark}


\appendix
\section{Proof of Lemma \ref{newpsiest}}
\label{appendix:psimest}

Define $f:[0, \infty) \to \bbr$ by
\[
f(\theta)=\frac{\psi_m(\theta)}{\psi_m'(\theta)}.
\]
Then, using  \eqref{eqn:psimM}, we find that $f$ satisfies
\[
\begin{cases}
f'(\theta) = 1-c_m(\theta)f^2(\theta), \qquad\forall \theta>0,\\[2pt]
f(0)=0,
\end{cases}
\]
and since $c_m$ is non-decreasing, we further have
\[
f'(\theta)\geq 1-c_m(R)f(\theta)^2, \qquad\forall 0 < \theta\leq R,
\]
and
\[
f'(\theta)\leq 1-c_m(R)f(\theta)^2, \qquad\forall R\leq \theta.
\]

Let $g:[0, \infty)\to\bbr$ be the solution to the following IVP:
\[
\begin{cases}
g'(\theta)=1-c_m(R)g(\theta)^2, \qquad \forall \theta>0,\\[2pt]
g(R)=f(R).
\end{cases}
\]
By the comparison theorem in ODEs, we first note that
\begin{align}\label{compODE}
f(\theta)\leq g(\theta),\qquad  \forall \theta > 0.
\end{align}
The IVP for $g$ has the explicit solution
\[
g(\theta)=\frac{1}{\sqrt{c_m(R)}}\tanh\left(\mathrm{arctanh}(\sqrt{c_m(R)}f(R))+\sqrt{c_m(R)}(\theta-R)\right),
\]
which used in \eqref{compODE}, gives
\[
f(\theta)\leq\frac{1}{\sqrt{c_m(R)}}\tanh\left(\mathrm{arctanh}(\sqrt{c_m(R)}f(R))+\sqrt{c_m(R)}(\theta-R)\right), \qquad\forall \theta > 0.
\]
From the definition of $f$, we further get
\[
\sqrt{c_m(R)}\coth\left(\mathrm{arctanh}(\sqrt{c_m(R)}f(R))+\sqrt{c_m(R)}(\theta-R)\right)\leq \frac{\psi_m'(\theta)}{\psi_m(\theta)},\qquad\forall \theta>0,
\]
which is equivalent to
\begin{align}\label{psidiffineq}
\frac{\d}{\d\theta}\ln\left(\sinh\left(\mathrm{arctanh}(\sqrt{c_m(R)}f(R))+\sqrt{c_m(R)}(\theta-R)\right)\right)\leq \frac{\d}{\d\theta}\ln\psi_m(\theta).
\end{align}

Now, we estimate $\psi(\theta)$ for $\theta > R$ and $0<\theta < R$ separately.
\medskip

\noindent(Case 1: $\theta > R$) Integrate \eqref{psidiffineq} from $R$ to $\theta>R$ to find
\[
\psi_m(\theta)\geq \psi_m(R)\left(\frac{\sinh\left(\mathrm{arctanh}(\sqrt{c_m(R)}f(R))+\sqrt{c_m(R)}(\theta-R)\right)}{\sinh\left(\mathrm{arctanh}(\sqrt{c_m(R)}f(R))\right)}\right), \qquad \forall \theta>R.
\]
If we set
\begin{equation}
\label{eqn:calB}
\mathcal{B}=\mathrm{arctanh}(\sqrt{c_m(R)}f(R)),
\end{equation}
then the right hand side of the above inequality can be estimated as follows:
\begin{align*}
&\psi_m(R)\left(\frac{\sinh(\mathcal{B}+\sqrt{c_m(R)}(\theta-R)}{\sinh\mathcal{B}}\right)\\
&=\psi_m(R)\exp(\sqrt{c_m(R)}(\theta-R))\left(\frac{1-\exp(-2\mathcal{B}-2\sqrt{c_m(R)}(\theta-R))}{1-\exp(-2\mathcal{B})}\right)\\
& \geq \psi_m(R)\exp(\sqrt{c_m(R)}(\theta-R)).
\end{align*}
This leads to the first inequality we need to show (for $\theta \geq R$).
\medskip

\noindent(Case 2: $0<\theta < R$) Integrate \eqref{psidiffineq} from $\theta< R$ to $R$ to get
\[
\psi_m(\theta)\leq  \psi_m(R)\left(\frac{\sinh\left(\mathrm{arctanh}(\sqrt{c_m(R)}f(R))+\sqrt{c_m(R)}(\theta-R)\right)}{\sinh\left(\mathrm{arctanh}(\sqrt{c_m(R)}f(R))\right)}\right), \qquad \forall 0<\theta <R.
\]
Again, use notation \eqref{eqn:calB} to simplify and estimate the right hand side above as
\begin{align*}
&\psi_m(R)\left(\frac{\sinh(\mathcal{B}+\sqrt{c_m(R)}(\theta-R)}{\sinh\mathcal{B}}\right)\\
&=\psi_m(R)\exp(\sqrt{c_m(R)}(\theta-R))\left(\frac{1-\exp(-2\mathcal{B}-2\sqrt{c_m(R)}(\theta-R))}{1-\exp(-2\mathcal{B})}\right)\\
&\leq \psi_m(R)\exp(\sqrt{c_m(R)}(\theta-R)),
\end{align*}
leading to the second inequality to be shown (for $0<\theta<R$).


\section{Proof of Lemma \ref{maininequn}}
\label{appendix:gen-CL-ineq}

Fix a nonnegative function $\rho$ on $M$ and fix $R>0$. By H\"{o}lder inequality with exponents $\frac{1}{q}$ and $\frac{1}{1-q}$, we get
\begin{equation}
\label{eqn:est-tlrun}
\begin{aligned}
\int_{r_x<R}\rho(x)^q \dx&\leq \left(\int_{r_x<R}\rho(x) \dx\right)^q\left(\int_{r_x< R}1\, \dx\right)^{1-q}\\[5pt]
&\leq\left(\int_M\rho(x) \dx\right)^q\left(\dm w(\dm)\int_0^R\psi_m(\theta)^{\dm-1}\d\theta\right)^{1-q},
\end{aligned}
\end{equation}
where in the second inequality we used the volume upper bound in Theorem \ref{lemma:Chavel-thms-gen}. Also by H\"{o}lder inequality with exponents $\frac{1}{q}$ and $\frac{1}{1-q}$, we find
\begin{equation}
\label{eqn:est-tgrun}
\begin{aligned}
\int_{r_x\geq R}\rho(x)^q \dx&\leq \left(\int_{r_x\geq R}
\psi_m(r_x)^\lambda\rho(x) \dx
\right)^q
\left(
\int_{r_x\geq R}\psi_m(r_x)^{-\frac{\lambda q}{1-q}} \dx
\right)^{1-q}\\[5pt]
&\leq  \left(\int_M
\psi_m(r_x)^\lambda\rho(x) \dx
\right)^q
\left(
\dm w(\dm)\int_R^\infty \psi_m(\theta)^{\dm-1-\frac{\lambda q}{1-q}}\d\theta\right)^{1-q},
\end{aligned}
\end{equation}
where for the second inequality we used a very similar argument to that used to derive the second inequality in \eqref{eqn:est-tgr}. Namely, we changed variables to geodesic spherical coordinates $x=\exp_\p(\theta u)$, with $\theta>0$ and $u \in \bbs^{\dm-1}$, in the second integral, and used the upper bound on $J(\exp_\p)$ from Theorem \ref{lemma:Chavel-thms-gen} to bound from above $\d V(x)$ -- see also \eqref{eqn:dV-bound}.

Denote
\begin{align*}
&C(R):=\left(\dm w(\dm)\int_0^R\psi_m(\theta)^{\dm-1}\d\theta\right)^{1-q},\\[2pt]
&D(R):=\left(
\dm w(\dm)\int_R^\infty \psi_m(\theta)^{\dm-1-\frac{\lambda q}{1-q}}\d\theta\right)^{1-q}.
\end{align*}
Then, by combining \eqref{eqn:est-tlrun} and \eqref{eqn:est-tgrun} we get
\begin{equation}
\label{eqn:est-rhoqun}
\int_M\rho(x)^q \dx\leq C(R)\left(\int_M
\rho(x) \dx
\right)^q+D(R)\left(\int_M
\psi_m(r_x)^\lambda\rho(x) \dx
\right)^q.
\end{equation}
From Lemma \ref{newpsiest}, we have
\begin{align*}
C(R)&\leq \left(\dm w(\dm)\int_0^R \left(\psi_m(R)\exp(\sqrt{c_m(R)}(\theta-R) \right)^{\dm-1}\d\theta\right)^{1-q}\\
&=\left(\frac{\dm w(\dm)\psi_m(R)^{\dm-1} \exp(-(\dm-1)\sqrt{c_m(R)}R)}{(\dm-1)\sqrt{c_m(R)}}\left(\exp((\dm-1)\sqrt{c_m(R)}R)-1\right)\right)^{1-q}\\
&<\left(\frac{\dm w(\dm)\psi_m(R)^{\dm-1} \exp(-(\dm-1)\sqrt{c_m(R)}R)}{(\dm-1)\sqrt{c_m(R)}}\exp((\dm-1)\sqrt{c_m(R)}R)\right)^{1-q}\\
&=\left(\frac{\dm w(\dm) }{(\dm-1)\sqrt{c_m(R)}}\right)^{1-q}\psi_m(R)^{(\dm-1)(1-q)}\\
&\leq\left(\frac{\dm w(\dm) }{(\dm-1)\sqrt{c_m(0)}}\right)^{1-q}\psi_m(R)^{(\dm-1)(1-q)}
\end{align*}
and (note that $d-1-\frac{\lambda q}{1-q}<0$)
\begin{align*}
D(R) &\leq \left(\dm w(\dm)\int_R^\infty (\psi_m(R)\exp(\sqrt{c_m(R)}(\theta-R))^{\dm-1-\frac{\lambda q}{1-q}}\d\theta\right)^{1-q}\\
&= \left(\frac{\dm w(\dm)\psi_m(R)^{\dm-1-\frac{\lambda q}{1-q}}\exp\left(\left(-\dm+1+\frac{\lambda q}{1-q}\right)\sqrt{c_m(R)}R\right)}{\left(-\dm+1+\frac{\lambda q}{1-q}\right)\sqrt{c_m(R)}} \exp\left(\left(\dm-1-\frac{\lambda q}{1-q}\right)\sqrt{c_m(R)}R\right)\right)^{1-q}\\
&= \left(\frac{\dm w(\dm)}{\left(-\dm+1+\frac{\lambda q}{1-q}\right)\sqrt{c_m(R)}} \right)^{1-q}\psi_m(R)^{(\dm-1)(1-q)-\lambda q}\\
&\leq \left(\frac{\dm w(\dm)}{\left(-\dm+1+\frac{\lambda q}{1-q}\right)\sqrt{c_m(0)}} \right)^{1-q}\psi_m(R)^{(\dm-1)(1-q)-\lambda q}.
\end{align*}

If we set
\begin{align*}
&\tilde{\alpha}_1=\left(\frac{\dm w(\dm) }{(\dm-1)\sqrt{c_m(0)}}\right)^{1-q},\qquad \beta_1=(\dm-1)(1-q),\\
&\tilde{\alpha}_2=\left(\frac{\dm w(\dm)}{\left(-\dm+1+\frac{\lambda q}{1-q}\right)\sqrt{c_m(0)}} \right)^{1-q},\qquad \beta_2=\lambda q-(\dm-1)(1-q),
\end{align*}
then by the assumptions on $q$ and $\lambda$ all these constants are positive and we get
\[
C(R)\leq \alpha_1\psi_m(R)^{\beta_1},\quad D(R)\leq \alpha_2\psi_m(R)^{-\beta_2}.
\]
Finally, we substitute the above result to \eqref{eqn:est-rhoqun} to get
\begin{align}\label{eqn:est-rhoqun-2}
\int_M\rho(x)^q \dx \leq \tilde{\alpha}_1\psi_m(R)^{\beta_1}\left(\int_M
\rho(x) \dx
\right)^q+\tilde{\alpha}_2\psi_m(R)^{-\beta_2}\left(\int_M
\psi_m(r_x)^\lambda\rho(x) \dx
\right)^q.
\end{align}

To find an optimal $R$, we calculate the derivative of the right hand side of \eqref{eqn:est-rhoqun} with respect to $R$, and set it to zero:
\[
0=\tilde{\alpha}_1\beta_1\psi_m(R)^{\beta_1-1}\psi_m'(R)\left(\int_M\rho(x) \dx\right)^q-\tilde{\alpha}_2\beta_2\psi_m(R)^{-\beta_2-1}\psi_m'(R)\left(\int_M\psi_m(r_x)^\lambda\rho(x) \dx\right)^q.
\]
Then, we get
\[
\psi_m(R)^{\beta_1+\beta_2}=\frac{\tilde{\alpha}_2\beta_2}{\tilde{\alpha}_1\beta_1}\frac{\left(\int_M\psi_m(r_x)^\lambda\rho(x)\dx\right)^q}{\left(\int_M\rho(x) \dx\right)^q}.
\]
Since $\psi_m(0)=0$, $\lim_{\theta\to\infty}\psi_m(\theta)=\infty$ and $\psi_m$ is an increasing function, we know there exists a unique $R=R_*>0$ which satisfies the above equation. Finally, we substitute $R=R_*$ into \eqref{eqn:est-rhoqun-2} to find
\begin{align*}
\int_M\rho(x)^q \dx&\leq \tilde{\alpha}_1\psi_m(R_*)^{\beta_1}\left(\int_M
\rho(x) \dx \right)^q+\tilde{\alpha}_2\psi_m(R_*)^{-\beta_2}\left(\int_M \psi_m(r_x)^\lambda\rho(x) \dx \right)^q\\[2pt]
&=\left(
\tilde{\alpha}_1\left(\frac{\alpha_2\beta_2}{\tilde{\alpha}_1\beta_1}\right)^{\frac{\beta_1}{\beta_1+\beta_2}}+\tilde{\alpha}_2\left(\frac{\tilde{\alpha}_2\beta_2}{\tilde{\alpha}_1\beta_1}\right)^{-\frac{\beta_2}{\beta_1+\beta_2}}
\right) \\
& \quad \times \left(\int_M\rho(x) \dx\right)^{\frac{\beta_2 q}{\beta_1+\beta_2}}\left(\int_M\psi_m(r_x)^\lambda\rho(x) \dx\right)^{\frac{\beta_1 q}{\beta_1+\beta_2}}\\[3pt]
&=\tilde{C}_1(\lambda, q, c_m, \dm) \left(\int_M\rho(x) \dx\right)^{(1-p)q}\left(\int_M\psi_m(r_x)^\lambda\rho(x) \dx\right)^{pq},
\end{align*}
where 
\[
\tilde{C}_1(\lambda, q, c_m, \dm) = \left(\tilde{\alpha}_1^{\beta_2} \tilde{\alpha}_2^{\beta_1}\right)^{\frac{1}{\beta_1+\beta_2}}
\left(
\left(\frac{\beta_2}{\beta_1}\right)^{\frac{\beta_1}{\beta_1+\beta_2}} +
\left(\frac{\beta_1}{\beta_2}\right)^{\frac{\beta_2}{\beta_1+\beta_2}}
\right),
\]
and 
$p=\frac{\beta_1}{\beta_1+\beta_2}=\frac{(\dm-1)(1-q)}{\lambda q}$.\\

\noindent\textbf{Acknowledgments}. JAC was partially supported by the Advanced Grant Nonlocal-CPD
(Nonlocal PDEs for Complex Particle Dynamics: Phase Transitions, Patterns and Synchronization) of the European Research Council Executive Agency (ERC) under the European
Union Horizon 2020 research and innovation programme (grant agreement No. 883363).
JAC acknowledge support by the EPSRC grant EP/V051121/1. JAC was also partially
supported by the “Maria de Maeztu” Excellence Unit IMAG, reference CEX2020-001105-M,
funded by MCIN/AEI/10.13039/501100011033/. RF was supported by NSERC Discovery
Grant PIN-341834 during this research. RF also acknowledges an NSERC Alliance International Catalyst grant, which supported H. Park’s visit to University of Oxford, where the
research presented in this paper was initiated.


\bibliographystyle{abbrv}
\def\url#1{}
\bibliography{lit-H.bib}

\def\cprime{$'$}
\begin{thebibliography}{10}

\bibitem{alias2019maximum}
L.~Al{\'\i}as, P.~Mastrolia, and M.~Rigoli.
\newblock {\em Maximum Principles and Geometric Applications}.
\newblock Springer Monographs in Mathematics. Springer International
  Publishing, 2019.

\bibitem{AGS2005}
L.~Ambrosio, N.~Gigli, and G.~Savar{\'e}.
\newblock {\em Gradient flows in metric spaces and in the space of probability
  measures}.
\newblock Lectures in Mathematics ETH Z\"urich. Birkh\"auser Verlag, Basel,
  2005.

\bibitem{Bedrossian11}
J.~Bedrossian.
\newblock Global minimizers for free energies of subcritical aggregation
  equations with degenerate diffusion.
\newblock {\em Appl. Math. Lett.}, 24(11):1927--1932, 2011.

\bibitem{CalvezCarrilloHoffmann2017}
V.~Calvez, J.~Carrillo, and F.~Hoffmann.
\newblock Equilibria of homogeneous functionals in the fair-competition regime.
\newblock {\em Nonlinear Analysis}, 159:85--128, 2017.

\bibitem{CarrilloCraigYao2019}
J.~A. Carrillo, K.~Craig, and Y.~Yao.
\newblock Aggregation-diffusion equations: {D}ynamics, asymptotics, and
  singular limits.
\newblock In N.~Bellomo, P.~Degond, and E.~Tadmor, editors, {\em Active
  Particles, Volume 2: Advances in Theory, Models, and Applications}, Modeling
  and Simulation in Science, Engineering and Technology, pages 65--108.
  Birkh\"auser, 2019.

\bibitem{CaDePa2019}
J.~A. Carrillo, M.~Delgadino, and F.~S. Patacchini.
\newblock Existence of ground states for aggregation-diffusion equations.
\newblock {\em Analysis and Applications}, 17(3):393--423, 2019.

\bibitem{carrillo2019reverse}
J.~A. Carrillo, M.~G. Delgadino, J.~Dolbeault, R.~L. Frank, and F.~Hoffmann.
\newblock Reverse {H}ardy-{L}ittlewood-{S}obolev inequalities.
\newblock {\em Journal de Math{\'e}matiques Pures et Appliqu{\'e}es},
  132:133--165, 2019.

\bibitem{CaDeFrLe2022}
J.~A. Carrillo, M.~G. Delgadino, R.~L. Frank, and M.~Lewin.
\newblock Fast diffusion leads to partial mass concentration in keller–segel
  type stationary solutions.
\newblock {\em Math. Models Methods Appl. Sci.}, 32(4):831--850, 2022.

\bibitem{fernandez2023partial}
J.~A. Carrillo, A.~Fern\'andez-Jim\'enez, and D.~G\'omez-Castro.
\newblock Partial mass concentration for fast-diffusions with non-local
  aggregation terms.
\newblock {\em J. Differential Equations}, 409:700--773, 2024.

\bibitem{CaFePa2024a}
J.~A. Carrillo, R.~C. Fetecau, and H.~Park.
\newblock Existence of ground states for free energies on the hyperbolic space.
\newblock {\em arXiv preprint http://arxiv.org/abs/...}, 2024.

\bibitem{CGV22}
J.~A. Carrillo, D.~G\'omez-Castro, and J.~L. V\'azquez.
\newblock Infinite-time concentration in aggregation-diffusion equations with a
  given potential.
\newblock {\em J. Math. Pures Appl. (9)}, 157:346--398, 2022.

\bibitem{CaHiVoYa2019}
J.~A. Carrillo, S.~Hittmeir, B.~Volzone, and Y.~Yao.
\newblock Nonlinear aggregation-diffusion equations: radial symmetry and long
  time asymptotics.
\newblock {\em Invent. Math.}, 218:889–977, 2019.

\bibitem{CaHoMaVo2018}
J.~A. Carrillo, F.~Hoffmann, E.~Mainini, and B.~Volzone.
\newblock Ground states in the diffusion-dominated regime.
\newblock {\em Calc. Var. Partial Differ. Equ.}, 57:127, 2018.

\bibitem{Chavel2006}
I.~Chavel.
\newblock {\em Riemannian Geometry : A Modern Introduction}.
\newblock Cambridge Studies in Advanced Mathematics. Cambridge University
  Press, second edition, 2006.

\bibitem{cheeger1975comparison}
J.~Cheeger and D.~G. Ebin.
\newblock {\em Comparison theorems in {R}iemannian geometry}, volume~9 of {\em
  North-Holland Mathematical Library}.
\newblock North-Holland Publishing Company, Amsterdam, 1975.

\bibitem{DDFM20}
P.~Degond, A.~Diez, A.~Frouvelle, and S.~Merino-Aceituno.
\newblock Phase transitions and macroscopic limits in a {BGK} model of
  body-attitude coordination.
\newblock {\em J. Nonlinear Sci.}, 30(6):2671--2736, 2020.

\bibitem{DelgadinoXukaiYao2022}
M.~G. Delgadino, X.~Yan, and Y.~Yao.
\newblock Uniqueness and nonuniqueness of steady states of
  aggregation-diffusion equations.
\newblock {\em Comm. Pure Appl. Math.}, 75(1):3--59, 2022.

\bibitem{doCarmo1992}
M.~P. do~Carmo.
\newblock {\em Riemannian Geometry}.
\newblock Mathematics: Theory and Applications. Birkh\"auser, Boston, second
  edition, 1992.

\bibitem{fatkullin2005critical}
I.~Fatkullin and V.~Slastikov.
\newblock Critical points of the onsager functional on a sphere.
\newblock {\em Nonlinearity}, 18(6):2565, 2005.

\bibitem{FeHaPa2021}
R.~C. Fetecau, S.-Y. Ha, and H.~Park.
\newblock An intrinsic aggregation model on the special orthogonal group
  {SO}(3): well-posedness and collective behaviours.
\newblock {\em J. Nonlinear Sci.}, 31(5):74, 2021.

\bibitem{FePa2023b}
R.~C. Fetecau and H.~Park.
\newblock Long-time behaviour of interaction models on {R}iemannian manifolds
  with bounded curvature.
\newblock {\em J. Geom. Anal.}, 33(7):218, 2023.

\bibitem{FePa2024b}
R.~C. Fetecau and H.~Park.
\newblock Aggregation-diffusion energies on {C}artan--{H}adamard manifolds of
  unbounded curvature.
\newblock {\em J. Geom. Anal.}, 34(12):356, 2024.

\bibitem{FePa2024a}
R.~C. Fetecau and H.~Park.
\newblock Ground states for aggregation--diffusion models on
  {C}artan--{H}adamard manifolds.
\newblock {\em J. Lond. Math. Soc.}, 111(2):e70079, 2025.

\bibitem{FePa2021}
R.~C. Fetecau and F.~S. Patacchini.
\newblock Well-posedness of an interaction model on {R}iemannian manifolds.
\newblock {\em Comm. Pure Appl. Anal.}, 21(11):3559--3585, 2021.

\bibitem{FeZh2019}
R.~C. Fetecau and B.~Zhang.
\newblock Self-organization on {R}iemannian manifolds.
\newblock {\em J. Geom. Mech.}, 11(3):397–426, 2019.

\bibitem{GPY17}
J.~Garnier, G.~Papanicolaou, and T.-W. Yang.
\newblock Consensus convergence with stochastic effects.
\newblock {\em Vietnam J. Math.}, 45(1-2):51--75, 2017.

\bibitem{GPY19}
J.~Garnier, G.~Papanicolaou, and T.-W. Yang.
\newblock Mean field model for collective motion bistability.
\newblock {\em Discrete Contin. Dyn. Syst. Ser. B}, 24(2):851--879, 2019.

\bibitem{GrilloMeglioliPunzo2021a}
G.~Grillo, G.~Meglioli, and F.~Punzo.
\newblock Global existence of solutions and smoothing effects for classes of
  reaction–diffusion equations on manifolds.
\newblock {\em J. Evol. Equ.}, 21(2):2339–2375, 2021.

\bibitem{GrilloMeglioliPunzo2021b}
G.~Grillo, G.~Meglioli, and F.~Punzo.
\newblock Smoothing effects and infinite time blowup for reaction-diffusion
  equations: {A}n approach via {S}obolev and {P}oincar\'e inequalities.
\newblock {\em J. Math Pures Appl.}, 151:99--131, 2021.

\bibitem{GrilloMuratoriVazquez2017}
G.~Grillo, M.~Muratori, and J.~L. V\'{a}zquez.
\newblock The porous medium equation on {R}iemannian manifolds with negative
  curvature. {T}he large-time behaviour.
\newblock {\em Advances in Mathematics}, 314:328--377, 2017.

\bibitem{GrilloMuratoriVazquez2019}
G.~Grillo, M.~Muratori, and J.~L. V\'{a}zquez.
\newblock The porous medium equation on {R}iemannian manifolds with negative
  curvature: the superquadratic case.
\newblock {\em Mathematische Annalen}, 373:119–153, 2019.

\bibitem{Helgason2001}
S.~Helgason.
\newblock {\em Differential {G}eometry, {L}ie {G}roups, and {S}ymmetric
  {S}paces}, volume~34 of {\em Graduate Studies in Mathematics}.
\newblock American Mathematical Society, Providence, RI, 2001.

\bibitem{Kaib17}
G.~Kaib.
\newblock Stationary states of an aggregation equation with degenerate
  diffusion and bounded attractive potential.
\newblock {\em SIAM J. Math. Anal.}, 49(1):272--296, 2017.

\bibitem{Santambrogio2015}
F.~Santambrogio.
\newblock {\em Optimal {T}ransport for {A}pplied {M}athematicians}, volume~87
  of {\em Progress in Nonlinear Differential Equations and Their Applications}.
\newblock Birkh\"auser, 2015.

\end{thebibliography}

\end{document}